\apptocmd{\sloppy}{\hbadness 10000\relax}{}{}
\newcommand{\comment}[1]{}
\newcommand{\C}{\mathbb{C}}
\newcommand{\Q}{\mathbb{Q}}
\newcommand{\Z}{\mathbb{Z}}
\newcommand{\N}{\mathbb{N}}
\newcommand{\cA}{\mathcal A}
\newcommand{\cO}{\mathcal O}
\newcommand{\cR}{\mathcal R}
\newcommand{\cS}{\mathcal S}
\newcommand{\cV}{\mathcal V}
\newcommand{\topdf}{\texorpdfstring}
\newcommand{\BF}{\mathfrak{BF}}
\newcommand{\xto}{\xrightarrow}
\newcommand{\iso}{\overset{\sim}{\longrightarrow}}
\DeclareMathOperator{\reg}{reg}
\DeclareMathOperator{\colim}{colim}
\DeclareMathOperator{\coker}{coker}
\DeclareMathOperator{\idem}{idem}
\DeclareMathOperator{\id}{id}
\DeclareMathOperator{\can}{can}
\DeclareMathOperator{\sink}{sink}
\DeclareMathOperator{\rk}{rk}
\DeclareMathOperator{\ad}{ad}
\def\gr{\operatorname{gr}}
\newcommand{\tmap}{\iota}
\newcommand{\cat}[1]{\mathsf{#1}}
\newcommand{\hltimes}{{ \ \widehat{\ltimes}\ }}
\newcommand{\gBF}{\BF_{\gr}}
\definecolor{thmcol}{RGB}{9, 115, 51}
\definecolor{citecol}{RGB}{9, 115, 51}
\definecolor{linkcol}{RGB}{9, 115, 51}
\definecolor{urlcol}{RGB}{9, 115, 51}
\numberwithin{equation}{section}
\theoremstyle{plain}
\newtheorem*{thm*}{Theorem}
\newtheorem{thm}[equation]{Theorem}
\newtheorem{lem}[equation]{Lemma}
\newtheorem{coro}[equation]{Corollary}
\newtheorem{prop}[equation]{Proposition}
\newtheorem{ques}[equation]{Question}
\theoremstyle{definition}
\newtheorem{defn}[equation]{Definition}
\newtheorem{conj}[equation]{Conjecture}
\newtheorem{ex}[equation]{Example}
\theoremstyle{remark}
\newtheorem{rmk}[equation]{Remark}
\newtheorem{nota}[equation]{Notation}
\newtheorem*{ack}{Acknowledgements}
\newtheorem*{note}{Note}
\title[]{Lifting morphisms between graded Grothendieck groups 
of Leavitt path algebras}
\author[1]{Guido Arnone}
\email{garnone@dm.uba.ar}
\address{Departamento de Matem\'atica-IMAS\\ Facultad de Ciencias Exactas y Naturales\\
Universidad de Buenos Aires, Argentina}
\thanks{Declaration of interest: none}
\date{}
\subjclass[2020]{16S88, 19A49}
\begin{document}

\maketitle

\begin{abstract} We show that any pointed, preordered module map
$\gBF(E) \to \gBF(F)$
between Bowen-Franks modules of finite graphs can be 
lifted to a unital, graded, diagonal 
preserving $\ast$-homomorphism 
$L_\ell(E) \to L_\ell(F)$ between 
the corresponding Leavitt 
path algebras over any 
commutative unital ring 
with involution $\ell$. Specializing to the case when $\ell$ is a field,
we establish the fullness part of Hazrat's conjecture 
about the functor from Leavitt path $\ell$-algebras
of finite graphs to preordered 
modules with order unit that maps $L_\ell(E)$
to its graded Grothendieck group. 
Our construction of lifts is of
combinatorial nature; we
characterize the maps arising 
from this construction as the
scalar extensions along $\ell$ of
unital, graded $\ast$-homomorphisms 
$L_\Z(E) \to L_\Z(F)$ 
that preserve a sub-$\ast$-semiring
introduced here.

\smallskip
\noindent \textbf{Keywords}: Leavitt path algebras, graded $K$-theory, Hazrat's conjectures.
\end{abstract}

\section{Introduction}

Let $E$
be a directed graph, and let $L_\ell(E)$ be its 
associated Leavitt path algebra
over a commutative ring $\ell$ with 
involution (\cite[Definition 2.5]{conmlpa}). This is 
a $\ast$-algebra which is 
graded over $\Z$. All graphs considered 
will have finitely many vertices and edges. 
We write $K_0^{\gr}(L_\ell(E))$ for the graded Grothendieck 
group of $L_\ell(E)$, that is, the group completion 
of the monoid of finitely generated projective $\Z$-graded $L_\ell(E)$-modules.
This is a preordered module with order unit $[L_\ell(E)]$; 
see Subsections \ref{subsec:preo} and \ref{subsec:gbf} for 
definitions of these terms and further details.
In \cite{hazrat} Hazrat conjectures that, when $\ell$ is a field, the graded
Grothendieck group classifies Leavitt path algebras as graded algebras.

\begin{conj}[{\cite[Conjecture 1]{hazrat}}]\label{conj:hazrat-iso}
Suppose that $\ell$ is a field.
Given graphs $E$ and $F$, the algebras $L_\ell(E)$ and $L_\ell(F)$ 
are graded isomorphic if and only if there 
is a preordered module 
isomorphism $K_0^{\gr}(L_\ell(E))\iso K_0^{\gr}(L_\ell(F))$ 
mapping $[L_\ell(E)]$ to $[L_\ell(F)]$.
\end{conj}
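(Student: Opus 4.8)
The forward implication is a formal consequence of functoriality: applying $K_0^{\gr}$ to a graded $\ast$-isomorphism $L_\ell(E)\iso L_\ell(F)$ yields an isomorphism $K_0^{\gr}(L_\ell(E))\iso K_0^{\gr}(L_\ell(F))$ of preordered modules, since a graded ring isomorphism carries finitely generated projective graded modules to the same, respects the positive cone, and sends $[L_\ell(E)]$ to $[L_\ell(F)]$. All of the content therefore lies in the reverse implication, which I would deduce from the lifting theorem stated in the abstract.

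To that end, suppose $\varphi\colon K_0^{\gr}(L_\ell(E))\iso K_0^{\gr}(L_\ell(F))$ is a preordered module isomorphism with $\varphi[L_\ell(E)]=[L_\ell(F)]$. The first step is to pass to the combinatorial model: using the identification of Subsection~\ref{subsec:gbf}, available because $\ell$ is a field, one has pointed isomorphisms of preordered modules $K_0^{\gr}(L_\ell(E))\cong\gBF(E)$ and $K_0^{\gr}(L_\ell(F))\cong\gBF(F)$, and transporting $\varphi$ across them gives a pointed, preordered module isomorphism $\gBF(E)\iso\gBF(F)$. Feeding this isomorphism and, separately, its inverse into the lifting theorem produces unital, graded, diagonal preserving $\ast$-homomorphisms $f\colon L_\ell(E)\to L_\ell(F)$ and $g\colon L_\ell(F)\to L_\ell(E)$ inducing $\varphi$ and $\varphi^{-1}$ on graded Grothendieck groups; in particular $K_0^{\gr}(f)=\varphi$ is an isomorphism.

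The decisive and, I expect, genuinely hard step is to upgrade the morphism $f$ to a graded isomorphism. This is precisely the gap between the fullness that the lifting theorem delivers and the statement of the conjecture. Fullness alone is not enough: the lift is wildly non-unique, so while $g\circ f$ induces $\varphi^{-1}\varphi=\id$ on $K_0^{\gr}(L_\ell(E))$ it need not be the identity homomorphism, and $K_0^{\gr}$ is patently not faithful. What is really needed is a conservativity (isomorphism-reflecting) property: that a unital, graded $\ast$-homomorphism between Leavitt path algebras of finite graphs which induces an isomorphism on $K_0^{\gr}$ is automatically a graded isomorphism; applied to $f$, this would finish the argument. Such a property is not supplied by the lifting construction, and the analogy with strong shift equivalence in symbolic dynamics---where the passage from $K$-theoretic data to conjugacy is famously subtle---warns that it is the true core of the difficulty. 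The line of attack I would pursue is to bypass abstract conservativity and work with the explicit combinatorial lifts directly: arrange $f$ and $g$ to be assembled from mutually inverse combinatorial data, compatible with the sub-$\ast$-semiring normal form introduced in the paper, so that $g\circ f$ and $f\circ g$ are forced to equal the identity by the shape of the construction rather than merely on $K_0^{\gr}$. Proving this rigidity of the lift is where the real work would have to be done.
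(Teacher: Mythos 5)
This statement is Hazrat's Conjecture 1: the paper does not prove it, and it remains open. The paper only records it (citing Hazrat, who proved it for polycephaly graphs, and Ara--Pardo, who proved a weaker version for graphs without sinks or sources) and then proves something strictly weaker, namely the fullness/lifting statement of Theorems \ref{thm:mainfield} and \ref{thm:main}. So there is no ``paper's own proof'' to compare against, and your proposal should be judged as a standalone attempt.

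As such, it is not a proof. Your forward direction and the reduction to $\gBF$ via $\can$ plus the lifting theorem are fine, but you correctly concede that everything then hinges on upgrading the lift $f$ to a graded isomorphism, and you leave that step as ``where the real work would have to be done.'' That step is not a technical refinement of the lifting argument; it is the entire content of the conjecture. Worse, the specific rigidity you propose --- choosing the lifts of $\varphi$ and $\varphi^{-1}$ from ``mutually inverse combinatorial data'' so that $g\circ f$ and $f\circ g$ are the identity by construction --- runs directly into the obstruction the paper itself isolates in Section \ref{sec:lift}. Corollary \ref{coro:tidy-comp} shows composites of tidy maps are tidy, but nothing forces such a composite to be the identity, and Theorem \ref{thm:norefle} shows that an affirmative answer to Question \ref{ques:norefle} would imply $K_0^{\gr}$ does not even reflect isomorphisms over $\C$; the Kim--Roush counterexample to the Williams conjecture (Question \ref{ques:kr}, Remark \ref{rmk:kr}) supplies graphs satisfying nearly all the conditions of that question. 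The combinatorial data produced by Proposition \ref{prop:bf-map-mat} amounts to a shift-equivalence-type relation, whereas invertibility of the lift is a strong-shift-equivalence-type condition, and symbolic dynamics says the gap between the two is real. So the strategy you sketch is precisely the one the paper's own results caution is unlikely to close the conjecture.
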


\begin{conj}[{\cite[Conjecture 3]{hazrat}}]\label{conj:hazrat-fullfaith}
Let $\ell$ be a field. The graded Grothendieck group 
is a fully faithful functor
from the category of unital 
Leavitt path $\ell$-algebras 
with graded homomorphisms modulo 
inner-automorphisms to the category 
of preordered abelian
groups with order unit.
\end{conj}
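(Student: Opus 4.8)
The plan is to treat the two halves of full faithfulness separately, working with the natural refinement of the target in which $K_0^{\gr}$ lands in pointed preordered $\Z[x,x^{-1}]$-modules, i.e. the Bowen--Franks modules $\gBF$ of the abstract. In this setting fullness is precisely our main lifting theorem: every pointed preordered module map $\gBF(E) \to \gBF(F)$ is realized by a unital graded $\ast$-homomorphism $L_\ell(E) \to L_\ell(F)$, which under the identification $K_0^{\gr}(L_\ell(E)) \cong \gBF(E)$ says exactly that $K_0^{\gr}$ is surjective on morphism sets. The entire difficulty therefore lies in faithfulness, which unwinds to the following uniqueness statement: if $\phi, \psi \colon L_\ell(E) \to L_\ell(F)$ are unital graded $\ast$-homomorphisms with $K_0^{\gr}(\phi) = K_0^{\gr}(\psi)$, then $\psi = \ad(u) \circ \phi$ for a homogeneous unit $u \in L_\ell(F)$. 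Since the classes $[p_v]$ generate $K_0^{\gr}(L_\ell(E))$ as a $\Z[x,x^{-1}]$-module, this hypothesis is equivalent to $[\phi(p_v)] = [\psi(p_v)]$ for all $v$, and I would reduce the conclusion to a simultaneous conjugation of the images of the standard generators $\{p_v\} \cup \{s_e, s_e^*\}$, since a graded $\ast$-homomorphism is determined by these values and two such maps are inner-conjugate exactly when a single homogeneous unit intertwines all generator images at once.

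\textbf{Step 1 (matching the vertex idempotents).} For each vertex $v$ the equality $[\phi(p_v)] = [\psi(p_v)]$ in $K_0^{\gr}(L_\ell(F))$ encodes graded (shift-equivariant) stable isomorphism of the finitely generated graded projectives $\phi(p_v)L_\ell(F)$ and $\psi(p_v)L_\ell(F)$. I would promote this to an honest graded isomorphism, hence to a homogeneous element conjugating $\phi(p_v)$ to $\psi(p_v)$. The input this needs is cancellation for the relevant graded projectives, which I would derive by working in the graded monoid $\cV^{\gr}(L_\ell(F))$ --- the talented monoid of $F$ --- and exploiting its refinement and separativity properties. By orthogonality of the $p_v$ and unitality of both maps, the per-vertex conjugators assemble into a single homogeneous unit $u_0$; replacing $\phi$ by $\ad(u_0) \circ \phi$, I may assume that $\phi$ and $\psi$ agree on every $p_v$.

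\textbf{Step 2 (matching the partial isometries).} With $\phi$ and $\psi$ equal on vertices, for each edge $e$ the images $\phi(s_e)$ and $\psi(s_e)$ are homogeneous partial isometries sharing the same source and range projections, so each $\psi(s_e)\phi(s_e)^*$ lies in a corner of $L_\ell(F)$; over the edges emitted by a fixed vertex these combine into a homogeneous unit of that corner intertwining $\phi$ and $\psi$ on edges while fixing vertices. The subtlety is carrying this out coherently across all vertices rather than one corner at a time; the obstruction is a graded $K_1$-type class, and I would show it vanishes by identifying the relevant corners as Leavitt path algebras whose graded unit groups are pinned down by the same combinatorial data, exploiting the explicit normal form furnished by the sub-$\ast$-semiring description of our lifts. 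Composing the conjugators of Steps 1 and 2 yields a homogeneous unit $u$ with $\psi = \ad(u) \circ \phi$, which is faithfulness.

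The main obstacle is the combination of the cancellation input of Step 1 with the coherence required in Step 2: upgrading the stable, shift-respecting equalities seen by $K_0^{\gr}$ to a genuine simultaneous conjugation of all generators is the graded algebraic counterpart of the uniqueness theorems behind the Cuntz--Krieger and Kirchberg--Phillips classifications, and the same phenomena --- separativity and cancellation of the talented monoid, vanishing of a $K_1$ obstruction, and flow-equivalence rigidity --- must here be secured over an arbitrary field $\ell$. A reasonable line of attack is to first settle graphs with no sinks for which $\cV^{\gr}$ is cancellative, where the talented monoid is best understood, and then descend to the general case through the standard flow-equivalence moves (in- and out-splittings, source eliminations), which stay within finite graphs, preserve graded Morita equivalence, and leave the $K_0^{\gr}$-data unchanged.
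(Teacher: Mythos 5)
You have set out to prove both halves of the conjecture, but the statement you were given is exactly that---a conjecture quoted from \cite{hazrat}---and the paper does not prove it; it proves only the \emph{fullness} half (Theorem \ref{thm:mainfield}, via Theorem \ref{thm:gbf-lift}). The faithfulness half is in fact \emph{false} in full generality: the paper's introduction cites \cite[Example 6.7]{towards}, where Ara and Pardo exhibit the failure. So while your fullness paragraph is fine (it is essentially the paper's main theorem, reducing along $\can \colon \gBF(E) \iso K_0^{\gr}(L_\ell(E))$ for $\ell$ a field and invoking the combinatorial lifting construction), your Steps 1 and 2 attempt to establish a statement with a known counterexample, so at least one step must break down.

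The break is concretely located in Step 2, where you assert that the ``graded $K_1$-type obstruction'' to assembling the per-corner conjugators vanishes; it does not. What is actually true over a field is Corollary \ref{coro:inner-diag}: if $K_0^{\gr}(f)=K_0^{\gr}(g)$ then for \emph{each} level $L$ there is a degree-zero unit $u_L$ with $\ad(u_L)\circ f = g$ on the finite-dimensional diagonal piece $D(E)_L$ --- your Step 1 cancellation input is available here, since $\cV^{\gr}(L(F))$ is cancellative over a field by \cite[Corollary 5.8]{stein} --- but the units $u_L$ depend on $L$ and cannot in general be merged into a single unit intertwining all generators. The simplest witness already appears in the paper's tidy-map example: for $E$ the graph with one vertex and one loop, $L_\ell(E)\simeq \ell[t,t^{-1}]$ is commutative, so every inner automorphism is trivial; yet the identity and the unital graded $\ast$-automorphism $t\mapsto -t$ induce the same map on $K_0^{\gr}$ (which is $\Z$ with trivial $\sigma$-action here) while being distinct modulo inner automorphisms. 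Hence no amount of separativity, flow-equivalence reduction, or $K_1$ bookkeeping can close Step 2 as stated, and a correct write-up must stop at fullness, as the paper does.
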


In the same article, Hazrat proves Conjecture \ref{conj:hazrat-iso}
for a class of graphs called polycephaly 
graphs (\cite[Definition 3.6]{hazrat}). A weaker version of 
Conjecture \ref{conj:hazrat-iso} was proven by 
Ara and Pardo in \cite{towards} for finite graphs with no sinks or sources. 
They also show that the faithfulness part 
of Conjecture \ref{conj:hazrat-fullfaith}
fails to hold in full 
generality (\cite[Example 6.7]{towards}).

The objective of this article is to address
the fullness part of Conjecture \ref{conj:hazrat-fullfaith}. In 
the particular case of a field, 
our main result reads as follows.

\begin{thm}\label{thm:mainfield} Let $\ell$ be a field. 
Given finite graphs $E$ and $F$ and $\phi \colon K_0^{\gr}(L_\ell(E))
\to K_0^{\gr}(L_\ell(F))$ a morphism of preordered modules such that
$\phi([L_\ell(E)])=[L_\ell(F)]$, there exists 
a unital, diagonal preserving
$\Z$-graded $\ast$-homomorphism
$\varphi \colon L_\ell(E) \to L_\ell(F)$ 
such that $K_0^{\gr}(\varphi) = \phi$. 
\end{thm}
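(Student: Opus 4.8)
The plan is to reduce the statement to a purely combinatorial lifting problem on the Bowen--Franks side and then realize the lift by hand inside $L_\ell(F)$. First I would use the identification, valid because $\ell$ is a field, of the graded Grothendieck group $K_0^{\gr}(L_\ell(E))$ with the graded Bowen--Franks module $\gBF(E)$ as preordered $\Z[x,x^{-1}]$-modules with order unit, the class $[L_\ell(E)]$ corresponding to the distinguished order unit. Under this identification the generators of $\gBF(E)$ are the classes $[v]$ of the vertex projections $p_v$, the $\Z[x,x^{-1}]$-action is the grading shift, and the defining relations are exactly $[v]=x\sum_{s(e)=v}[r(e)]$ at each regular vertex, so that $\gBF(E)=\coker(I-xA_E^t)$ for the adjacency matrix $A_E$. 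The hypothesis on $\phi$ then translates into a $\Z[x,x^{-1}]$-linear map $\bar\phi\colon\gBF(E)\to\gBF(F)$ sending the order unit to the order unit and carrying the positive cone into the positive cone.

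Second, I would turn $\bar\phi$ into concrete matrix data. Since $\bar\phi$ preserves the positive cone, I expect each generator $[v]$ to be expressible as an $\N[x,x^{-1}]$-combination of the generators of $\gBF(F)$; assembling these columns produces a matrix $B$ with entries in $\N[x,x^{-1}]$ representing $\bar\phi$, which must intertwine the presentation matrices $I-xA_E^t$ and $I-xA_F^t$ modulo their column spans, together with a normalization encoding unitality. The crux is to \emph{choose} such a nonnegative representative: the module map only determines $B$ up to the column span of $I-xA_F^t$, and one must exploit positivity and the order-unit condition to select a representative all of whose entries are genuine nonnegative multiplicities, since only nonnegative multiplicities can be realized by orthogonal families of projections.

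Third, with $B$ in hand I would construct $\varphi$ directly, and at the integral level: each entry $B_{wv}=\sum_n c_{wv,n}x^n$ prescribes how many mutually orthogonal copies of the shifted vertex projection $p_w(n)$, realized as range projections along suitable paths in $F$ so as to land in the diagonal subalgebra, to use as the target of $p_v$. Setting $\varphi(p_v)$ to be the corresponding orthogonal sum and defining $\varphi$ on edges by compatible partial isometries yields a unital, graded, diagonal preserving $\ast$-homomorphism $L_\Z(E)\to L_\Z(F)$, provided the Cuntz--Krieger relations are respected, which is guaranteed precisely by the intertwining congruence; extending scalars along $\ell$ then gives $\varphi\colon L_\ell(E)\to L_\ell(F)$ over any $\ell$, which is what makes the field hypothesis enter only through the identification in the first step. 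Finally, reading off the induced map on $K_0^{\gr}$ recovers $B$, hence $\bar\phi$, and naturality of the $K_0^{\gr}\cong\gBF$ identification gives $K_0^{\gr}(\varphi)=\phi$.

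The main obstacle is the second step: producing from the abstract positive module map a nonnegative, relation-compatible matrix representative. Preserving the positive cone is a condition on classes taken modulo $(I-xA_F^t)$, whereas the realization in the third step needs honest nonnegative entries, so the heart of the argument is a combinatorial normal-form result — in the spirit of the symbolic-dynamics arguments of Boyle--Krieger and the approach of Ara--Pardo — showing that such representatives always exist and can be further adjusted to be unital and $\ast$-compatible. Everything else, namely realizing nonnegative data as orthogonal projections and partial isometries, verifying the Cuntz--Krieger relations, and bookkeeping the shifts $x^n$ and the diagonal, is systematic once this representative is secured.
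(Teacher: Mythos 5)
Your overall strategy coincides with the paper's: identify $K_0^{\gr}$ with $\gBF$ over a field, extract nonnegative matrix data from the module map, realize that data combinatorially by path projections and partial isometries over $\Z$, and extend scalars along $\ell$. However, there is a genuine gap exactly where you locate ``the crux'': you never produce the nonnegative, relation-compatible representative, and the formulation you propose --- a matrix intertwining the presentation matrices \emph{modulo their column spans} --- is too weak for your third step to go through. A congruence modulo the image of $I-\sigma A_F^t$ does not yield partitions of the path sets $F^L_w$ and $F^i_u$ indexed by $E^0$, nor bijections between edge--path pairs; the Cuntz--Krieger relations for $\varphi$ are verified by matching actual sets of paths, so one needs \emph{exact} integer identities such as \eqref{eq:bf-mor-P}, \eqref{eq:bf-mor-S} and \eqref{eq:bf-mor-AER=RAF}. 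The paper obtains these in Proposition \ref{prop:bf-map-mat} by working with the filtered colimit presentation of $\gBF(F)$ (Proposition \ref{prop:gbf-colim}): positivity of $\phi$ gives nonnegative representatives at some finite stage, and filteredness (Remark \ref{rmk:filtercolim}) allows one to push the finitely many identities expressing $\Z[\sigma]$-linearity and unitality far enough along the transition maps so that they hold on the nose in $\Z^{V_L}$, nonnegativity being preserved because the transition maps are given by the nonnegative matrices $B_F$ and $C_F$. Appealing to Boyle--Krieger or Ara--Pardo does not substitute for this argument; in particular, the Ara--Pardo setting excludes sinks, which are precisely the source of the bookkeeping difficulties here.

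A second, unaddressed obstruction concerns negative shifts. The positive cone of $\gBF(F)$ is generated by the elements $\sigma^i u$ with $i \in \Z$, so a priori your matrix $B$ may have entries involving \emph{negative} powers of $x$ on sink components, and such classes cannot be realized as $[\beta\beta^\ast]$ for a path $\beta$, since Lemma \ref{lem:paths-gbf} only produces nonnegative shifts. The paper rules this out in Lemma \ref{lem:no-neg}, where the condition $\phi(1_E)=1_F$ forces all negative-shift sink coordinates of the representatives to vanish; your ``normalization encoding unitality'' would have to contain precisely this argument, and it is not automatic. Everything after that point --- defining $\varphi$ on vertices and edges from the partitions and bijections and verifying the relations (V), (E), (CK1), (CK2), then checking $K_0^{\gr}(\varphi)=\phi$ --- is indeed systematic, as you say, and matches the proof of Theorem \ref{thm:gbf-lift}.
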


Here diagonal preserving means that the map $\varphi$
above sends a distinguished commutative subalgebra of $L_\ell(E)$, 
the diagonal subalgebra, to that of $L_\ell(F)$. 

For an arbitrary commutative ring with involution $\ell$, 
we turn our attention to the Bowen-Franks module $\gBF(E)$
of a graph $E$, a notion closely related to that of the graded 
Grothendieck group. There are a distinguished 
element $1_E \in \gBF(E)$ and a comparison map
\[
\can \colon \gBF(E) \to K_0^{\gr}(L_\ell(E)), \qquad 1_E \mapsto [L_\ell(E)],
\]
which is an isomorphism whenever 
$K_0(\ell) = \Z$ (\cite[Corollary 5.4]{arcor}). Theorem \ref{thm:mainfield}
is then implied by the more general statement below.

\begin{thm}[Theorem \ref{thm:gbf-lift}] \label{thm:main} 
Let $E$ and $F$ be finite graphs. If
$\phi \colon \gBF(E)\to\gBF(F)$  is
a morphism of preordered modules mapping $1_E \mapsto 1_F$, 
then there exists a unital, $\Z$-graded, diagonal preserving 
$\ast$-homomorphism
$\varphi \colon L_\ell(E) \to L_\ell(F)$ such that
the following diagram commutes.
\begin{center}
\begin{tikzcd}
 K_0^{\gr}(L_\ell(E)) \arrow{r}{K_0^{\gr}(\varphi)} & K_0^{\gr}(L_\ell(F))\\
 \gBF(E) \arrow{u}{\can}\arrow{r}{\phi} & \arrow{u}[right]{\can} \gBF(F)
\end{tikzcd}
\end{center}
\end{thm}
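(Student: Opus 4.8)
The plan is to reduce to the case $\ell=\Z$ and then construct the lift by hand from the combinatorics of $F$. Since $L_\ell(E)=L_\Z(E)\otimes_\Z\ell$ as graded $\ast$-algebras and $\gBF(E)$ is defined without reference to $\ell$, it suffices to produce a unital, graded, diagonal preserving $\ast$-homomorphism $\varphi_\Z\colon L_\Z(E)\to L_\Z(F)$ lifting $\phi$; extension of scalars along $\Z\to\ell$ then yields the desired $\varphi$, and the comparison square commutes by naturality of $\can$ and its compatibility with base change. I would therefore fix $\ell=\Z$, where $\can$ is an isomorphism because $K_0(\Z)=\Z$, so that I may identify $\gBF$ with the graded Grothendieck group and argue directly with projective modules. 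Recall the presentation of $\gBF(E)$ as the cokernel of a $\Z[x,x^{-1}]$-linear map determined by $I-x\,A_E^{t}$, with distinguished generators $1_v$ indexed by the vertices, order unit $1_E=\sum_v 1_v$, and preorder generated by the shifted vertex classes $x^n 1_v$. Such a map $\phi$ is then pinned down by the positive elements $\phi(1_v)$, each of which satisfies $\phi(1_v)\le 1_F$ because they are positive and sum to $1_F$.

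The heart of the argument is a realization step carried out inside a combinatorial sub-$\ast$-semiring $\cS\subseteq L_\Z(F)$ generated by the path projections $\mu\mu^*$. Every positive class of $\gBF(F)$ bounded above by $1_F$ should be writable as $[p\,L_\Z(F)]$ for a genuine projection $p\in\cS$ that is an orthogonal sum of path projections, with the grading shift $x^{\pm|\mu|}$ recorded by the path lengths. For each vertex $v$ I would choose such a $p_v=\varphi(v)\in\cS$ realizing $\phi(1_v)$; the hypothesis $\phi(1_E)=1_F$ lets these be taken mutually orthogonal with $\sum_v\varphi(v)=1_{L_\Z(F)}$, forcing $\varphi$ to be unital. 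To extend $\varphi$ to edges I would exploit the defining relation $1_v=x\sum_{s(e)=v}1_{r(e)}$ of $\gBF(E)$: applying $\phi$ exhibits $\varphi(v)$ as a sum, indexed by the edges $e$ out of $v$, of sub-projections each Murray--von Neumann equivalent to $\varphi(r(e))$ up to a single unit of shift, and these equivalences supply partial isometries $\varphi(e)$ of degree $1$ with $\varphi(e)\varphi(e)^*\le\varphi(v)$ and $\varphi(e)^*\varphi(e)=\varphi(r(e))$.

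With these assignments in hand, checking the Cuntz--Krieger relations (CK1) and (CK2) becomes a finite verification. The map is graded because path lengths track degrees, so that $\deg\varphi(e)=1$; it is $\ast$-preserving because $\varphi(e^*)$ is defined as $\varphi(e)^*$; and it is diagonal preserving because every vertex image lies in $\cS\subseteq D_\Z(F)$ and each path projection of $E$ is sent to a sum of path projections of $F$. Finally, evaluating $K_0^{\gr}(\varphi)$ on the generators $[v\,L_\Z(F)]$ returns $\can\circ\phi$ by construction, so the square commutes; extending scalars back to $\ell$ then completes the proof.

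The hard part will be the simultaneous coherence of all these combinatorial choices. The path families realizing the various $\phi(1_v)$ cannot be chosen independently: the edge data must satisfy (CK1) and (CK2) at once, and the ranges of the partial isometries at $v$ must match the vertex projections already fixed at the $r(e)$. Moreover, an identity of classes in $K_0^{\gr}$ only yields a stable equivalence a priori, so genuine orthogonal decompositions of $\varphi(v)$ have to be manufactured explicitly rather than invoked abstractly. Producing one coherent system --- and proving that positivity of $\phi$ together with the relation presenting $\gBF$ is precisely what guarantees its existence --- is the crux, and I expect it to require a normal-form or inductive realization organized along the adjacency structure of $E$ and $F$. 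It is exactly here that the preordered-module hypothesis is indispensable: a map that did not respect the preorder could send some $1_v$ outside the positive cone, where no projection realizes it and the entire construction collapses.
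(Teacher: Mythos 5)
Your overall skeleton coincides with the paper's: reduce to $\ell=\Z$ by scalar extension (diagonal preservation then being automatic over $\Z$), realize each $\phi(v)$ as an orthogonal sum of path projections $\alpha\alpha^\ast$ in $L_\Z(F)$, and build the edge images out of the defining relation $v=\sigma\sum_{e\in s^{-1}(v)}r(e)$. However, what you yourself call ``the crux'' --- producing one coherent system of orthogonal decompositions and degree-one partial isometries satisfying (CK1) and (CK2) simultaneously --- is exactly the mathematical content of the theorem, and your proposal names it rather than supplies it. In the paper this is done in two stages: first, Proposition \ref{prop:bf-map-mat} converts $\phi$ into non-negative integer matrices $S^{(0)},\dots,S^{(L)},R$ satisfying the intertwining identities \eqref{eq:bf-mor-P}, \eqref{eq:bf-mor-S} and \eqref{eq:bf-mor-AER=RAF}, using the filtered-colimit presentation of $\gBF$ (Proposition \ref{prop:gbf-colim}, Remark \ref{rmk:filtercolim}); second, these identities are reinterpreted as partitions of the path sets $F^L_w$, $F^i_u$ together with explicit bijections \eqref{bij:zetai}, \eqref{bij:zetaN}, \eqref{bij:xi}, from which $\varphi$ is written down in closed form and all relations are verified by direct computation with Lemma \ref{lem:pathsort}. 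Your outline gives no mechanism for either stage.

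Two of the abstract shortcuts you hope to lean on are, moreover, obstructed. First, your realization claim --- that every positive class of $\gBF(F)$ bounded above by $1_F$ equals $[p]$ for a projection $p$ that is a sum of path projections --- is not available as stated: a positive class may a priori involve terms $\sigma^{-i}u$ with $u$ a sink and $i>0$, and no path projection realizes a negative shift; eliminating these terms is not a consequence of boundedness but of the full unitality constraint $\sum_v\phi(v)=1_F$ combined with positivity, which is the paper's Lemma \ref{lem:no-neg}, applied inside the proof of Proposition \ref{prop:bf-map-mat}. Second, you correctly observe that equality of classes in $K_0^{\gr}$ only yields stable Murray--von Neumann equivalence, but propose to repair this by an unspecified inductive realization; note that over $\Z$ one cannot invoke cancellation of $\cV^{\gr}(L_\Z(F))$ (the paper uses graded cancellation only when $\ell$ is a field, in Corollary \ref{coro:inner-diag}), so the needed equivalences must genuinely be manufactured path by path --- which is precisely what the bijections $\xi$ and $\zeta^i$ accomplish. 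As it stands, your proposal is a correct plan of attack, with the central construction missing.
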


A natural question that arises from the theorem above
is whether the lift of an
isomorphism between Bowen-Franks modules 
is necessarily an algebra isomorphism. 
Using recent results of Carlsen, Dor-On and Eilers \cite{shift},
we relate this to a question on the associated graph $C^\ast$-algebras
(Question \ref{ques:norefle}), which can also be interpreted 
as a question 
in terms of shift equivalences and strong shift equivalences
of matrices.
In Theorem \ref{thm:norefle} we show
that an affirmative answer to this question would imply that, for Leavitt path $\C$-algebras, the functor $K_0^{\gr}$
does not reflect isomorphisms.
We also note that a counterexample to the Williams 
conjecture by Kim and Roush \cite{KR} yields 
a pair of graphs satisfying almost all of 
the conditions in Question \ref{ques:norefle}; see
Question \ref{ques:kr} and Remark \ref{rmk:kr} for further details.

The construction of lifts given in
Theorem \ref{thm:main} is of combinatorial nature;
we translate the information encoded by a 
Bowen-Franks module map 
to the existence of certain partitions of paths in 
a graph and bijections between them. We call the 
homomorphisms that arise in such a way \emph{tidy};
see Definition \ref{defn:tidy} for a precise statement.
We prove a characterization of tidy maps in terms 
of a sub-$\ast$-semiring 
of a Leavitt path $\Z$-algebra introduced here, which we call its \emph{positive cone} (see Definition \ref{def:pc}).
A morphism $L_\Z(E) \to L_\Z(F)$ will be said to be
order preserving if it maps the positive cone 
of $L_\Z(E)$ to that of $L_\Z(F)$. 
The theorem below lets us conclude, 
in particular, that the composite of two tidy homomorphisms
is again tidy (Corollary \ref{coro:tidy-comp}).

\begin{thm}[Theorem \ref{thm:tidy-char}]\label{thm:introtidy}
Let $E$ and $F$ be finite graphs
and $\varphi \colon L_\ell(E) \to L_\ell(F)$
a unital $\ell$-algebra homomorphism. 
The following statements are equivalent.
\begin{itemize}
    \item[i)] The morphism $\varphi$ is tidy.
    \item[ii)] The morphism $\varphi$ is the scalar extension along $\ell$ 
    of a unital, order preserving, $\Z$-graded 
    $\ast$-homomorphism $L_\Z(E) \to L_\Z(F)$.
\end{itemize}
\end{thm}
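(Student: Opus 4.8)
The plan is to prove the two implications separately, using the combinatorial data of Definition~\ref{defn:tidy} — the partitions of paths and the bijections between their blocks — as the bridge to the algebraic conditions in (ii). Throughout I would work with the standard monomial spanning set $\{\alpha\beta^\ast\}$ of a Leavitt path algebra and the description of the positive cone (Definition~\ref{def:pc}) as the $\N$-span of such monomials, which is manifestly closed under the product, since $(\alpha\beta^\ast)(\gamma\delta^\ast)$ reduces via the first Cuntz--Krieger relation to a single monomial or to zero, and under the involution $\alpha\beta^\ast \mapsto \beta\alpha^\ast$.

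For (i) $\Rightarrow$ (ii), I would use the combinatorial data witnessing tidiness, which carries non-negative integer coefficients, to define a $\Z$-linear map $\psi \colon L_\Z(E) \to L_\Z(F)$ on monomials. The first task is to verify that $\psi$ respects the defining relations of $L_\Z(E)$; the relation needing the most care is the second Cuntz--Krieger relation, since it sums over all edges emitted by a regular vertex, and it is precisely the prescribed bijections between blocks that make the two sides agree. Grading and $\ast$-compatibility are then immediate, because the data sends a monomial of degree $|\alpha|-|\beta|$ to a combination of monomials of the same degree and intertwines $\alpha\beta^\ast \mapsto \beta\alpha^\ast$ with the corresponding operation in $F$. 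Order preservation is built in, as the assignment sends monomials to non-negative combinations of monomials and the positive cone is a sub-$\ast$-semiring generated by these. Finally, extending scalars along $\Z \to \ell$ agrees with $\varphi$ on generators, so $\varphi = \psi \otimes_\Z \ell$.

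For the converse (ii) $\Rightarrow$ (i), suppose $\varphi = \psi \otimes_\Z \ell$ with $\psi$ unital, $\Z$-graded, $\ast$-preserving and order preserving, and read off the partitions and bijections from the images under $\psi$ of the canonical generators. Being order preserving, $\psi$ sends each vertex idempotent $v$ into the positive cone; unitality together with the orthogonality of the $v$ forces these images to be sums of mutually orthogonal projections attached to genuine paths in $F$, which is exactly a partition. The images of the degree $\pm 1$ generators $e$ and $e^\ast$, constrained to lie in the positive cone and to respect both the grading and the $\ast$-structure, then supply the bijections between blocks, and one checks that the data obtained this way satisfies every condition of Definition~\ref{defn:tidy}.

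I expect this extraction step in (ii) $\Rightarrow$ (i) to be the main obstacle: turning the abstract hypothesis that $\psi$ preserves the positive cone into the rigid statement that the images of generators are honest sums of monomials with unit coefficients, organized into partitions and bijections. The decisive point is that an order-preserving $\ast$-homomorphism cannot create cancellation among positive generators, so once the positive cone is identified with the $\N$-span of the monomials $\alpha\beta^\ast$, the idempotency and orthogonality of the images force the partition/bijection structure and hence tidiness.
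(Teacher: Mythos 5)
Your overall strategy is the same as the paper's: for (i)$\Rightarrow$(ii) one observes that the tidy data, having integer coefficients, defines the map already over $\Z$ (this is exactly the content of the proof of Theorem \ref{thm:gbf-lift}, where the relations, above all (CK2), are verified), and for (ii)$\Rightarrow$(i) one extracts the partitions and bijections from the images of the generators using positivity, the grading, the involution, and the defining relations. The first direction of your sketch is essentially complete. The problem is in the second direction, where your "decisive point" is asserted by an argument that, as stated, is not valid: the monomials $\alpha\beta^\ast$ are \emph{not} linearly independent in $L_\Z(F)$ --- relation (CK2) is itself a relation among them with positive coefficients on both sides --- so the positive cone is not a free $\N_0$-semimodule on these monomials, and the principle that an order-preserving map "cannot create cancellation among positive generators" has no direct meaning. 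Coefficient-forcing arguments (idempotency forces $0/1$ coefficients, orthogonality forces disjoint supports, etc.) only become legitimate after every positive expression has been rewritten at a \emph{common level} $N$, where the monomials involved do form a $\Z$-basis of matrix units. This reduction is precisely what the paper supplies: Lemma \ref{lem:DcapPE} shows that an element of $D(F) \cap PC(F)$ has nonnegative coordinates in the level-$N$ diagonal basis (proved by the conjugation trick $c_\gamma\, r(\gamma) = \gamma^\ast x \gamma$), and the paper moreover invokes the result of \cite{diago} that every $\ast$-homomorphism of Leavitt path $\Z$-algebras is diagonal preserving in order to know that $\varphi(v)$ lies in $D(F)$ at all. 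Your sketch contains neither ingredient, and without them the step "unitality together with orthogonality forces the vertex images to be sums of $\gamma\gamma^\ast$" cannot be run.

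The second place where real content is hidden behind "one checks" is the bijections themselves. That each coefficient in $\varphi(e)$ equals $1$ and that each $\alpha$ contributes a \emph{single} path $x_\alpha$ (rather than a multiple, or a sum of several paths) is forced by expanding $\varphi(e)^\ast\varphi(e) = \varphi(r(e))$ in the matrix-unit basis and comparing with the $0/1$ form of $\varphi(r(e))$; injectivity of the maps across distinct edges $e \neq g$ with the same source comes from $\varphi(g)^\ast\varphi(e) = 0$; that the images land in the correct blocks comes from $\varphi(s(e))\varphi(e) = \varphi(e)$; and surjectivity --- which your sketch never mentions --- comes from comparing $\varphi(v) = \sum_{e \in s^{-1}(v)}\varphi(e)\varphi(e)^\ast$ (the image of (CK2)) with the (CK2)-expansion of the expression for $\varphi(v)$. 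These computations are where the hypotheses that $\psi$ is graded, unital and a $\ast$-map are actually consumed, and they constitute the bulk of the paper's proof of Theorem \ref{thm:tidy-char}; a correct write-up of your plan would have to include them, after first establishing the common-level positivity lemma above.
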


The rest of the article is organized as follows. In Section \ref{sec:prelim}
we recall the basic notions of 
Leavitt path algebras and Bowen-Franks modules.
Section \ref{sec:gr-mvn} is dedicated to auxiliary results
regarding the relation between the graded Grothendieck 
group and homogeneous idempotents. In Section \ref{sec:gbf}
we establish some properties of Bowen-Franks modules
using a presentation introduced in \cite{arcor}; this is then 
used in Section \ref{sec:maps-gbf} to give a characterization 
of maps between Bowen-Franks modules in terms of
vertex indexed matrices with nonnegative integer coefficients (Proposition \ref{prop:bf-map-mat}). 
With this in place, in 
Section \ref{sec:lift} we prove Theorem \ref{thm:main}
as Theorem \ref{thm:gbf-lift}. Theorem \ref{thm:introtidy} 
is proven in Section \ref{sec:tidyequiv} as Theorem \ref{thm:tidy-char}.

\begin{note}
We point out that, simultaneously and independently,
Lia Va\v{s} has proved a lifting 
result \cite[Theorem 3.2]{vas} in the case when $\ell$ is a field, 
similar to Theorem \ref{thm:mainfield}.
The lifts obtained in loc. cit. are not shown
to be involution nor diagonal preserving.  
We also 
remark that \cite[Theorem 3.2]{vas} 
holds for graphs with finitely many vertices and countably many edges.
In \cite[Section 5.2]{vas}, some non-constructive steps 
of the proof of \cite[Theorem 3.2]{vas} are discussed, and 
the question of whether one can explicitly produce
a lift is posed. We note that the proofs 
of Proposition \ref{prop:bf-map-mat} and  
Theorem \ref{thm:gbf-lift}
give an algorithm to construct lifts; however,
we exhibit no bound for the finitely many steps 
needed to obtain
Equations \eqref{eq:linearity} and \eqref{eq:unitality}.
We would also like to remark that 
the implications 
$(1) \Rightarrow (5^+) \Rightarrow (5)$ of \cite[Theorem 4.1]{vas}
can be recovered by setting $L=1$ in Corollary \ref{coro:inner-diag}.
\end{note}

\begin{nota} In this article the natural numbers do not include zero; we will write $\N := \Z_{\ge 1}$
and $\N_0 := \N \cup \{0\}$.
The letter $\sigma$ will denote the generator of the infinite cyclic group $C_\infty \simeq \Z$,
written multiplicatively. Its group ring will be denoted $\Z[\sigma] := \Z[C_\infty]$.
\end{nota}

\begin{ack} I wish to thank Guillermo Corti\~nas for his
advice and encouragement to write this article, as well as for
his valuable comments and suggestions. 
This research was supported by a CONICET doctoral fellowship and partially supported by grants
PICT 2017-1395 from ANPCyT,
PIP 2021-2023 GI, 00423CO from CONICET and
UBACyT 256BA from UBA.
\end{ack}

\section{Preliminaries} \label{sec:prelim}

\numberwithin{equation}{subsection}

\subsection{Graphs}

A \emph{graph} $E$ consists of two sets $E^0$ of \emph{vertices} and $E^1$ of \emph{edges},
together with source and range functions $s,r \colon E^1 \to E^0$.
All graphs in this paper will be assumed to be finite, meaning that
$E^0$ and $E^1$ are finite sets.

A vertex $v \in E^0$ is \emph{regular} if $s^{-1}(v) \neq \emptyset$
and a \emph{sink} otherwise. The set of regular vertices will be denoted
$\reg(E) \subset E^0$; its complement is
$\sink(E) = E^0 \setminus \reg(E)$.
We say that $E$ is regular if $E^0 = \reg(E)$.  

Given $v,w \in E^0$, we will write $E_{v,w} = s^{-1}(v) \cap r^{-1}(w)$
for the set of edges with source $v$ and range $w$.
The \emph{adjacency matrix} of a graph $E$ is
the matrix $\cA_E \in \Z^{E^0 \times E^0}$,
whose $(v,w)$-entry is the amount
of edges in $E$ with source $v$ and range $w$, $(\cA_E)_{v,w} = \#E_{v,w}$.
The \emph{reduced adjacency matrix}
$A_E \in \Z^{\reg(E) \times E^0}, (A_E)_{v,w} = (\cA_E)_{v,w}$, is the matrix
obtained from removing the rows of $\cA_E$ corresponding to sinks.

A \emph{path} in $E$ is a finite
sequence of edges $\alpha = e_1 \ldots e_n$ such that
$r(e_i) = s(e_{i+1})$ for each $i \in \{1, \ldots, n-1\}$. The
\emph{source} of $\alpha$ is $s(\alpha) =  s(e_1)$ and its range
is $r(\alpha) = r(e_n)$.
The \emph{length} of $\alpha$ is $|\alpha| = n$. We will consider a vertex
$v \in E^0$ as a path of length zero with source and range $v$.
The set of paths of $E$ will be
denoted $E^\infty$. For each $k \geq 0$ and $v,w \in E^0$, we shall write
$E^k_{v,w} = \{\alpha \in E^\infty : s(\alpha) = v, r(\alpha) = w, |\alpha| = k\}$ and $E^k_w := \bigcup_{v \in E^0} E^k_{v,w} = \{\alpha \in E^\infty : r(\alpha) = w, |\alpha| = k\}$. For convenience,
we recall that $\#E^k_{v,w} = (\cA^k)_{v,w}$ and thus 
\begin{equation}\label{eq:powers-paths}
    \#E^k_{w} = \sum_{v \in E^0} (\cA_E^k)_{v,w}.
\end{equation}
We shall also need to distinguish between paths 
of a certain length ending at sinks or regular vertices; given
$n \ge 0$ we put $\mathcal R_n(E) = \bigcup_{w \in \reg(E)} E^n_{w}$
and $\mathcal S_n(E) = \bigcup_{u\in \sink(E)}\bigcup_{i = 0}^n E^i_{u}$.

\subsection{Leavitt path algebras}

Throughout the paper, we fix a commutative
unital ring $\ell$ equipped with an involution
$\ast \colon \ell \to \ell$.

Let $E$ be a graph
and $L(E) := L_\ell(E)$ its Leavitt path algebra
over $\ell$ \cite[Definition 2.5]{conmlpa}
equipped with its canonical $\Z$-grading
\cite[Proposition 4.7]{conmlpa}. 
Recall that by \cite[Theorem 8.1]{conmlpa} there is an
isomorphism $L(E) = \ell \otimes_\Z L_\Z(E)$
and, in particular, when $\ell$ is a field this definition
agrees with that of \cite[Definition 1.2.3]{lpabook}.
We consider the canonical involution on $L_\Z(E)$ as defined in
\cite[Remark 4.1]{conmlpa}
and equip $L(E)$ with the tensor product involution.

Unless specified otherwise, whenever we refer to \cite{lpabook},
we shall implicitly mean that the cited argument holds true for a Leavitt path
algebra over any commutative unital ring.
We record the following lemmas for future use.

\begin{lem}\label{lem:pathsort} Let $\alpha$ and $\beta$
be two paths in a finite graph $E$. The following identities hold in $L(E)$:
\begin{itemize}
    \item[(i)] If $|\alpha| = |\beta|$, then $\alpha^\ast \beta = \delta_{\alpha, \beta} r(\alpha)$.
    \item[(ii)] If $r(\alpha) \in \reg(E), r(\beta) \in \sink(E)$ and $|\beta| \leq |\alpha|$,
    then $\alpha^\ast \beta = \beta^\ast \alpha = 0$.
    \item[(iii)] If $r(\alpha), r(\beta) \in \sink(E)$,
    then $\alpha^\ast \beta = \delta_{\alpha, \beta} r(\alpha)$.
    \item[(iv)] For each $N \geq 0$, the set $\{xx^\ast : x\in \cR_N(E) \sqcup \cS_N(E)\}$
    consists of homogeneous orthogonal projections.
\end{itemize}
\end{lem}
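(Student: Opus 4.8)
The plan is to prove Lemma \ref{lem:pathsort} by working directly from the defining relations of the Leavitt path algebra and exploiting the length hypotheses in each case. Recall that the generators are the vertices $v \in E^0$ (orthogonal idempotents), the edges $e \in E^1$, and their ``ghost'' duals $e^\ast$, subject to $s(e) e = e = e\, r(e)$, the relation $e^\ast f = \delta_{e,f} r(e)$ for edges $e,f$, and the two Cuntz--Krieger relations: $e^\ast e = r(e)$ together with $v = \sum_{e \in s^{-1}(v)} e e^\ast$ for every regular vertex $v$. For a path $\alpha = e_1 \cdots e_n$ we have $\alpha^\ast = e_n^\ast \cdots e_1^\ast$, and products of the form $\alpha^\ast \beta$ are computed by cancelling matched edges from the inside out.

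\medskip

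For item (i), I would argue by induction on the common length $|\alpha| = |\beta| = n$. The base case $n = 0$ is just the orthogonality of vertex idempotents, $v w = \delta_{v,w} v$. For the inductive step, writing $\alpha = e\alpha'$ and $\beta = f\beta'$, the product $\alpha^\ast \beta = \alpha'^\ast (e^\ast f) \beta'$ collapses via $e^\ast f = \delta_{e,f} r(e)$: if $e \neq f$ it vanishes, and if $e = f$ it reduces to $\alpha'^\ast \beta'$ with $|\alpha'| = |\beta'| = n-1$, to which the inductive hypothesis applies. The Kronecker delta $\delta_{\alpha,\beta}$ then accumulates as the product of the edgewise deltas. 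Item (iii) follows by the identical induction once one observes that paths ending at sinks have no further edges to match past their range, so the same cancellation argument goes through verbatim; the sink hypothesis is only needed to ensure we compare paths whose ranges are genuinely terminal.

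\medskip

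For item (ii) the key point is a length mismatch forced by the range conditions. Since $|\beta| \le |\alpha|$, cancelling the matched initial edges of $\beta$ against those of $\alpha$ either produces a mismatch (giving $0$) or reduces $\alpha^\ast \beta$ to an expression of the form $\gamma^\ast r(\beta)$ where $\gamma$ is a nonempty tail of $\alpha$ with $s(\gamma) = r(\beta)$. But $r(\beta)$ is a sink, so it emits no edges and $s^{-1}(r(\beta)) = \emptyset$; since $\gamma$ begins with an edge sourced at $r(\beta)$, no such $\gamma$ exists and the product is forced to vanish. Taking adjoints and using $(\alpha^\ast\beta)^\ast = \beta^\ast \alpha$ gives the symmetric statement. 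The main point to handle carefully here is the bookkeeping when the two lengths are equal versus strictly unequal, but the sink obstruction resolves both cases uniformly.

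\medskip

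For item (iv), I would first note that each $x x^\ast$ with $x \in \cR_N(E) \sqcup \cS_N(E)$ is homogeneous of degree $0$, since $x$ and $x^\ast$ have opposite degrees $|x|$ and $-|x|$. That each is a projection (self-adjoint idempotent) follows from self-adjointness of $(xx^\ast)^\ast = xx^\ast$ together with $x^\ast x = r(x)$ from (i) or (iii), giving $(xx^\ast)^2 = x(x^\ast x)x^\ast = x\, r(x)\, x^\ast = xx^\ast$. Orthogonality of distinct such elements is where the earlier items do the real work: for $x \neq y$ in $\cR_N(E) \sqcup \cS_N(E)$ we need $(xx^\ast)(yy^\ast) = x(x^\ast y)y^\ast = 0$, and $x^\ast y = 0$ follows from (i) when $x,y$ both lie in $\cR_N(E)$ (equal length $N$, distinct paths), from (iii) when both lie in $\cS_N(E)$ with equal length, and from (ii) in the mixed case where one range is regular and the other a sink with unequal lengths. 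The main obstacle is ensuring the three cases of (i)--(iii) exhaust all pairs $x \neq y$ drawn from $\cR_N(E) \sqcup \cS_N(E)$; I expect this reduces to checking that the length and range data of elements in these two path-sets always fall under exactly one of the hypotheses of (i), (ii), or (iii), which is a finite case analysis on whether the ranges are sinks or regular and on the comparison of lengths.
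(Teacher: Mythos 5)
Your proof is correct, but it takes a more self-contained route than the paper: the paper disposes of all four items in one line by citing \cite[Lemma 1.2.12]{lpabook} (the standard multiplication rules for monomials in a Leavitt path algebra), whereas you re-derive those rules directly from the generators and relations. In substance your edge-cancellation induction is exactly the content of the cited lemma, so nothing is lost; what your version buys is independence from the reference and visibility of where each hypothesis enters (in particular, that the sink conditions in (ii) and (iii) are what forbid one path from being a proper prefix of the other), while the paper's citation buys brevity. Two bookkeeping points you should tighten. First, in (ii) your dichotomy ``mismatch, or a nonempty tail $\gamma$'' omits the case where every edge cancels and the tail is empty, i.e.\ $|\alpha|=|\beta|$ and $\alpha=\beta$; that case is ruled out not by $r(\beta)$ emitting no edges but because $r(\alpha)=r(\beta)$ would then be simultaneously regular and a sink. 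Second, in (iv) your case list assigns pairs in $\cS_N(E)$ to (iii) only ``with equal length'' and the mixed case to (ii) only ``with unequal lengths''; as stated this leaves two configurations unassigned, but in fact (iii) carries no length hypothesis at all (so it covers sink-ended paths of different lengths, which occur in $\cS_N(E)$ since it contains paths of every length $\le N$), and (ii) allows $|\beta|=|\alpha|$ (so it covers the mixed case with equal lengths). With these readings the three items do exhaust all pairs $x\neq y$ in $\cR_N(E)\sqcup\cS_N(E)$, confirming the finite case analysis you anticipated.
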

\begin{proof}Straightforward from \cite[Lemma 1.2.12]{lpabook}.
\end{proof}

\begin{lem}\label{lem:1-paths} Let $E$ be a finite graph. For each $N \in \N_0$,
\[
1 = \sum_{\alpha \in \cR_N(E)} \alpha \alpha^\ast
+ \sum_{\beta \in \cS_N(E)}\beta\beta^\ast
\]
\end{lem}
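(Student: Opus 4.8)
The plan is to induct on $N$. For the base case $N=0$, observe that $\cR_0(E)$ consists of the length-zero paths ending at regular vertices, i.e.\ the set $\reg(E)$ viewed inside $L(E)$, and likewise $\cS_0(E) = \sink(E)$. Since each vertex $v$ is a self-adjoint idempotent with $vv^\ast = v$, the right-hand side becomes $\sum_{v \in \reg(E)} v + \sum_{v \in \sink(E)} v = \sum_{v \in E^0} v$, which equals $1$ by the defining relations of $L(E)$; here I use that $E^0$ is finite, so that $L(E)$ is unital with unit the sum of all vertices.

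For the inductive step, I would assume the identity for some $N \ge 0$ and rewrite the first sum by expanding each projection at its (necessarily regular) endpoint. Given $\alpha \in \cR_N(E)$ with $w := r(\alpha) \in \reg(E)$, I write $\alpha\alpha^\ast = \alpha\, w\, \alpha^\ast$ and apply the relation $w = \sum_{e \in s^{-1}(w)} e e^\ast$ to obtain $\alpha\alpha^\ast = \sum_{e \in s^{-1}(w)} (\alpha e)(\alpha e)^\ast$. The key combinatorial point is that $(\alpha,e) \mapsto \alpha e$ is a bijection between the pairs $(\alpha,e)$ with $\alpha \in \cR_N(E)$ and $s(e) = r(\alpha)$, and the set $E^{N+1}$ of all paths of length $N+1$: every $\gamma$ of length $N+1$ factors uniquely as $\gamma = \alpha e$ with $|\alpha| = N$, and the range of its length-$N$ prefix $\alpha$ is the source of $e$, hence regular, so $\alpha \in \cR_N(E)$. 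Consequently $\sum_{\alpha \in \cR_N(E)} \alpha\alpha^\ast = \sum_{\gamma \in E^{N+1}} \gamma\gamma^\ast$.

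Finally I would sort the length-$(N+1)$ paths by the type of their endpoint, splitting $E^{N+1}$ into those ending at regular vertices and those ending at sinks, so that $\sum_{\gamma \in E^{N+1}} \gamma\gamma^\ast = \sum_{\gamma \in \cR_{N+1}(E)} \gamma\gamma^\ast + \sum_{\gamma \in E^{N+1},\, r(\gamma)\in\sink(E)} \gamma\gamma^\ast$. Because $\cS_{N+1}(E)$ is the disjoint union of $\cS_N(E)$ with the length-$(N+1)$ paths ending at sinks, substituting this back into the inductive hypothesis yields exactly $1 = \sum_{\gamma \in \cR_{N+1}(E)} \gamma\gamma^\ast + \sum_{\beta \in \cS_{N+1}(E)} \beta\beta^\ast$, completing the induction. (Lemma \ref{lem:pathsort}(iv) guarantees that every summand is an honest orthogonal projection, so the expression is unambiguous.)

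The argument is essentially bookkeeping and I do not anticipate a genuine obstacle; the only step requiring care is the index-set manipulation, namely verifying the unique factorization $\gamma = \alpha e$ and the fact that the length-$N$ prefix of any length-$(N+1)$ path automatically ends at a regular vertex (so that no path is omitted from $\cR_N(E)$), together with the disjoint decomposition of $\cS_{N+1}(E)$.
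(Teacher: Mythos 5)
Your proof is correct and takes essentially the same route as the paper's: the base case $N=0$ is the relation $\sum_{v \in E^0} v = 1$, and the inductive step applies relation (CK2) at the regular endpoints, which is exactly what the paper's two-line proof invokes. The bookkeeping you spell out (the unique factorization $\gamma = \alpha e$, the automatic regularity of the length-$N$ prefix's range, and the decomposition of $\cS_{N+1}(E)$) is precisely what the paper leaves implicit.
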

\begin{proof} The lemma is true when $N = 0$ by \cite[Lemma 1.2.12 (iv)]{lpabook}. The general case
follows inductively from \cite[Relation (CK2) in Definition 1.2.3]{lpabook}.
\end{proof}

Recall that given a graph $E$, the \emph{diagonal} 
of $L(E)$ is the sub-$\ast$-algebra 
$D(E) = \mathrm{span}_\ell\{\alpha\alpha^\ast : \alpha \in E^\infty\} \subset L(E)_0$. An algebra homomorphism $\varphi \colon L(E) \to L(F)$ is \emph{diagonal preserving} if $\varphi(D(E)) \subset D(F)$.
Writing
\[
L(E)_{0,n} = \mathrm{span}_\ell\{\alpha\beta^\ast  : \alpha, \beta \in \mathcal R_n(E)\}
\oplus \mathrm{span}_\ell\{\alpha\beta^\ast  : \alpha,
\beta \in \mathcal S_n(E)\}
\]
and
\[
D(E)_n =  \mathrm{span}_\ell\{\alpha\alpha^\ast  : \alpha \in \mathcal R_n(E) \sqcup \mathcal S_n(E)\},
\]
each algebra $L(E)_{0,n}$ is matricial and $D(E)_n$ is its diagonal subalgebra. We have increasing unions 
$L(E)_0 = \bigcup_{n \ge 0} L(E)_{0,n}$, $D(E) = \bigcup_{n \ge 0} D(E)_n$.

\subsection{Preordered \topdf{$\Z[\sigma]$}{Z[σ]}-modules}
\label{subsec:preo}

A \emph{preordered $\Z[\sigma]$-module} is a $\Z[\sigma]$-module $M$ together with an
additive submonoid $M_+$ such that $\sigma M_+ \subset M_+$ and $\sigma^{-1} M_+ \subset M_+$.
For $m, n \in M$ , we will write $m \geq n$ to
mean that $m-n \in M_+$. An \emph{order unit} in a preordered module $M$ is an element
$u \in M$ such that for all $m \in M$, there exist $x \in \N_0[\sigma]$ such that $xu \ge m$.
A morphism of preordered $\Z[\sigma]$-modules with order unit is a $\Z[\sigma]$-linear
map $f \colon (M,u) \to (N,v)$ such that $f(M_+)\subset N_+$ and $f(u) = v$.

\subsection{Bowen-Franks modules} 
\label{subsec:gbf}
Let $E$ be a graph and $I \in \Z^{E^0 \times \reg(E)}$
the matrix defined by $I_{v,w} = \delta_{v,w}$. The \emph{Bowen-Franks $\Z[\sigma]$-module} of
$E$ is
\[
\gBF(E) := \coker(I-\sigma A_E^t) = \frac{\Z[\sigma]^{E^0}}{\langle v
- \sigma \sum_{e \in s^{-1}(v)} r(e) : v \in \reg(E)\rangle}
\]
There is a canonical preordered module structure
on $\gBF(E)$ given by the submonoid $\gBF(E)_+$ generated by the elements
$\sigma^i v$ for each $v \in E^0$ and $i\in \Z$ (this is 
known as the \emph{talented monoid} of $E$; see \cite{talented}). 
The element $1_E := \sum_{v \in E^0} v$
is an order unit with respect to this preordered module structure.

Write $K_0^{\gr}(L(E))$ for the group completion of the monoid of projective,
finitely generated, graded $L(E)$-modules.
This group is equiped with
a canonical preordered $\Z[\sigma]$-module structure.
For each homogeneous
idempotent $x \in L(E)_0$, we define $[x] := [L(E)x] \in K^{\gr}_0(L(E))$.
There is a canonical preordered $\Z[\sigma]$-module homomorphism
\begin{equation}\label{def:can}
    \can \colon \gBF(E) \to K_0^{\gr}(L(E)), \quad v \mapsto [v].
\end{equation}
By \cite[Corollary 5.4]{arcor} the map above is an isomorphism whenever
the Grothendieck group of $\ell$ is isomorphic to $\Z$; for example,
such is the case when $\ell$ is a field or more generally a PID.

\begin{rmk}Let $E$ be a graph. By \cite[Lemma 5.1]{arcor}, the following sequence 
is exact:
\[
0\to \Z[\sigma]^{\reg(E)} \xto{I-\sigma A_E^t} \Z[\sigma]^{E^0} \to \gBF(E) \to 0.
\]
Thus, writing $K = \Q(\sigma)$ for the fraction field of $\Z[\sigma]$, 
it follows that 
\begin{equation}\label{eq:rk-bfg}
\rk \gBF(E) := \dim_K K \otimes_{\Z[\sigma]} \gBF(E) = \# \sink(E).
\end{equation}
\end{rmk}

\numberwithin{equation}{section}

\section{Graded idempotents and Murray-von Neumann equivalence}\label{sec:gr-mvn}

Let $R$ be a ring. Recall that the idempotent elements $\idem(R)$ of $R$ are partially 
ordered by defining $e \le f$ whenever $ef = fe  = e$ and that $e,f \in \idem(R)$
are said to be \emph{Murray-von Neumann equivalent}, denoted $e \sim f$, if there exist $x,y \in R$ 
such that $xy = e$, $yx = f$. 
This is an equivalence relation, put $\cV(R) = \idem(R)/\sim$ and 
$\cV_\infty(R) := \idem(M_\infty R)/\sim$. We also 
recall that the block sum of matrices makes $\cV_\infty(R)$ into a commutative monoid.

\begin{rmk} \label{rmk:vgr}
Recall that a ring $R$ has
\emph{local units}  if 
for each finite subset $F \subset R$ there exists 
$e \in \idem(R)$ satisfying $F \subset eRe$. If $R$ 
is a ring with local units, then
by \cite[Section 4A]{stein} the monoid $\cV_\infty(R)$ 
is isomorphic to the monoid of isomorphism 
classes of finitely generated projective unital $R$-modules. 
Let $S$ be a unital $\Z$-graded ring and let $\Z \hltimes S$ be 
its crossed product as defined in \cite[Subsection 2.5]{arcor}.
By \cite[Section 2C]{stein} (see also \cite[Section 3.1]{arcor}), 
we have that 
$\cV_\infty(\Z \hltimes S)$ is naturally isomorphic to the 
monoid $\cV^{\gr}(S)$ of isomorphism classes of finitely 
generated $\Z$-graded projective $S$-modules.  
In particular $K_0^{\gr}(S)$ is the group
completion of $\cV_\infty(\Z \hltimes S)$.
\end{rmk}

\begin{lem}\label{lem:mvn-corner} 
Let $R$ be a ring and $e,f,g \in \idem(R)$ such that $e,f \leq g$. The 
following statements are equivalent:
\begin{itemize}
\item[i)] There exist $x,y \in gRg$ such that $xy = e$, $yx = g$.
\item[ii)] The idempotents $e$ and $f$ are Murray-von Neumann equivalent. 
\end{itemize}
\end{lem}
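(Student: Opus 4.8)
I read statement (i) with the pairing $xy = e$, $yx = f$, so that it genuinely asserts a Murray--von Neumann equivalence $e \sim f$ realized inside the corner $gRg$; with this reading the lemma is the standard fact that such equivalences are detected in any corner containing both idempotents.

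The implication (i) $\Rightarrow$ (ii) is immediate, and I would dispose of it first: since $gRg \subseteq R$, any $x,y \in gRg$ with $xy = e$ and $yx = f$ already witness $e \sim f$ in $R$ by definition.

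For (ii) $\Rightarrow$ (i) the plan is to start from arbitrary witnesses in $R$ and \emph{correct} them so that they come to lie in the corner. I would fix $a,b \in R$ with $ab = e$ and $ba = f$ and then set $x := eaf$ and $y := fbe$. Since $e,f \le g$ one has $ge = eg = e$ and $gf = fg = f$, and these relations yield $gxg = x$ and $gyg = y$ directly, placing $x,y \in gRg$. It then remains to verify that the correction preserves both products, namely $xy = e$ and $yx = f$. The auxiliary identities I expect to use are $ea = af$ and $be = fb$, each obtained by reassociating $ab = e$ and $ba = f$ (for instance $ea = (ab)a = a(ba) = af$). Feeding these in, together with $e^2 = e$ and $f^2 = f$, should collapse $xy$ to $e(ab)e = e$ and $yx$ to $f(ba)f = f$.

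The one point I expect to require care is the choice of correction itself. Cutting $a$ and $b$ down on a single side, by $e$ or by $f$ alone, is not enough: a one-sided correction preserves only one of the two products. The remedy is to symmetrize and multiply on both sides, as in $eaf$ and $fbe$, so that the corrected elements simultaneously fall into $gRg$ and continue to implement the equivalence. Apart from this, the argument is routine bookkeeping with idempotent relations.
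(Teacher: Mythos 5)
Your reading of the typo in (i) (that $yx=g$ should be $yx=f$) matches the paper's intent, and your proof is correct and essentially identical to the paper's: both implement the correction $x \mapsto exf$, $y \mapsto fye$ of arbitrary witnesses and verify membership in $gRg$ via $e,f \le g$. No further comment is needed.
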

\begin{proof} Since $gRg \subset R$, we have that 
$i) \Rightarrow ii)$. To prove $ii) \Rightarrow i)$, 
consider $x,y \in R$ such that $xy = e$, $yx = f$
and put $x' = exf$, $y' = fye$. Then $x'y' = e$, 
$y'x' = f$ and $x' \in eRf = geRfg \subset gRg$, $y \in gRg$.
\end{proof}

\begin{lem} \label{lem:v-cor-mono} 
If $R$ is a ring and $g \in \idem(R)$, then the inclusion induced map 
$\cV(gRg) \to \cV_\infty(R)$ is injective.
\end{lem}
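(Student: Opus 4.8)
The plan is to reduce the injectivity to Lemma \ref{lem:mvn-corner} by realizing the relevant Murray-von Neumann equivalence first inside a finite matrix ring and then inside the corner $gRg$. The map in question sends the class of an idempotent $e \in gRg$ to its class in $\cV_\infty(R)$, so concretely I must show: if $e,f \in \idem(gRg)$ have the same image in $\cV_\infty(R)$, then $e$ and $f$ already represent the same class in $\cV(gRg)$.

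First I would unwind the hypothesis. Having equal images in $\cV_\infty(R)$ means that $e$ and $f$, regarded as idempotents of $M_\infty R$ (sitting in the $(1,1)$-entry), are Murray-von Neumann equivalent there. Any implementing pair $x,y \in M_\infty R$, together with $e$ and $f$, involves only finitely many matrix entries, so the equivalence is in fact realized in $M_n R$ for some $n \ge 1$; that is, there exist $x,y \in M_n R$ with $xy = e$ and $yx = f$.

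Next I would pass to the corner of $g$ inside $M_n R$. Viewing $g$, $e$ and $f$ in the top-left $(1,1)$-entry of $M_n R$, one has $e,f \le g$ in $\idem(M_n R)$, and a direct check shows $g(M_n R)g = gRg$ under this identification: left multiplication by $g$ retains only the first row, right multiplication only the first column, leaving the single entry $g\,(-)\,g$. I can then apply Lemma \ref{lem:mvn-corner} with ambient ring $M_n R$ and idempotents $e,f,g$. Its condition (ii) holds by the previous paragraph, so its condition (i) yields elements $x',y' \in g(M_n R)g = gRg$ witnessing the Murray-von Neumann equivalence $e \sim f$ within $gRg$. Hence $[e] = [f]$ in $\cV(gRg)$, which is exactly the desired injectivity.

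There is no serious obstacle here. The only points requiring care are the finite-support reduction from $M_\infty R$ to $M_n R$ and the identification $g(M_n R)g = gRg$, both routine once the corner placement is fixed. The substance of the argument—that an equivalence of idempotents lying below $g$ can always be implemented inside $gRg$—is precisely what Lemma \ref{lem:mvn-corner} supplies, so the proof amounts to arranging the hypotheses so that that lemma applies.
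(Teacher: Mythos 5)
Your proof is correct and follows essentially the same route as the paper's: reduce the $M_\infty R$ equivalence to some $M_n R$, observe that under the upper-left corner inclusion $e,f \le g$ and $\iota_n(g)(M_n R)\iota_n(g) = \iota_n(gRg)$, and then invoke Lemma \ref{lem:mvn-corner} in $M_n R$ to produce witnesses inside $gRg$. The only difference is cosmetic: the paper keeps the corner inclusion $\iota_n$ explicit and pulls the witnesses back using its injectivity, whereas you identify $gRg$ with its image outright.
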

\begin{proof} 
Write $\iota_k \colon gRg \to M_k R$ for the upper-left corner inclusion. 
Let $e,f \in gRg$ be two idempotents that are Murray-von Neumann equivalent as elements of $M_\infty R$. There exists
$n \ge 1$ such that $\iota_n(e) \sim \iota_n(f)$ in $M_n R$. 
Since $e,f \le g$,  it follows that $\iota_n(e),\iota_n(f) \leq \iota_n(g)$; by Lemma \ref{lem:mvn-corner}
there must exist $a,b \in \iota_n(g)(M_n R)\iota_n(g) = \iota_n(gRg)$ such that 
$ab = \iota_n(e)$, $ba = \iota_n(f)$. Let $x,y \in gRg$ be such that 
$a = \iota_n(x), b = \iota_n(y)$. Since $\iota_n$ is injective, it follows 
that $xy = e$, $yx = f$, thus proving that $e \sim f$.
\end{proof}

\begin{coro} \label{coro:deg0-inj} If $R$ is a $\Z$-graded unital ring, the canonical map
$\cV(R_0) \to \cV_\infty(\Z \hltimes R)$ is injective.
\end{coro}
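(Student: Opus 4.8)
The plan is to deduce this corollary from the two preceding lemmas, specializing Lemma \ref{lem:v-cor-mono} to the crossed product $\Z \hltimes R$. The key observation is that the canonical map $\cV(R_0) \to \cV_\infty(\Z \hltimes R)$ should factor as $\cV(R_0) \to \cV(gRg) \to \cV_\infty(\Z \hltimes R)$ for a suitable idempotent $g \in \Z \hltimes R$, so that injectivity follows from Lemma \ref{lem:v-cor-mono}. First I would recall, from the description of the crossed product in \cite[Subsection 2.5]{arcor}, how $R_0$ embeds as a corner of $\Z \hltimes R$. The natural candidate is to take $g$ to be the idempotent implementing the degree-zero part, so that $g(\Z \hltimes R)g$ is identified with $R_0$ (or contains it as the relevant corner). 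Under this identification the map $\cV(R_0) \to \cV_\infty(\Z \hltimes R)$ is precisely the inclusion-induced map of Lemma \ref{lem:v-cor-mono}, which is injective.

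Concretely, I would argue as follows. By Remark \ref{rmk:vgr}, $\cV_\infty(\Z \hltimes R)$ computes $\cV^{\gr}(R)$, and the structure of the crossed product gives a distinguished idempotent $g$ whose corner $g(\Z \hltimes R)g$ is isomorphic as a ring to $R_0$. An idempotent $e \in R_0$ is sent to its image under this corner embedding, and Murray-von Neumann equivalence in $R_0 = g(\Z \hltimes R)g$ corresponds to equivalence of the images in the corner. Thus the canonical map $\cV(R_0) \to \cV_\infty(\Z \hltimes R)$ is exactly the map $\cV(g(\Z \hltimes R)g) \to \cV_\infty(\Z \hltimes R)$ induced by inclusion. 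Lemma \ref{lem:v-cor-mono}, applied to the ring $\Z \hltimes R$ and the idempotent $g$, then yields injectivity directly.

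The main obstacle I anticipate is not the injectivity argument itself, which is immediate once the identification is set up, but rather pinning down precisely which idempotent $g \in \Z \hltimes R$ realizes $R_0$ as the corner $g(\Z \hltimes R)g$, and checking that the resulting corner map agrees with the \emph{canonical} map $\cV(R_0) \to \cV_\infty(\Z \hltimes R)$ referred to in the statement. This requires unwinding the definition of the crossed product $\Z \hltimes R$ and the natural isomorphism $\cV_\infty(\Z \hltimes R) \cong \cV^{\gr}(R)$ from \cite[Section 2C]{stein} and \cite[Section 3.1]{arcor}. Once one verifies that the canonical map indeed passes through $\cV(g(\Z \hltimes R)g) = \cV(R_0)$, the corollary follows. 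The proof should therefore be short: identify $g$, invoke the corner identification, and apply Lemma \ref{lem:v-cor-mono}.
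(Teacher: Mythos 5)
Your proposal is correct and follows exactly the paper's own argument: the paper applies Lemma \ref{lem:v-cor-mono} to $\Z \hltimes R$ with the idempotent $g = \chi_0 \hltimes 1$, identifying $R_0 \simeq g(\Z \hltimes R)g$ via $r \mapsto \chi_0 \hltimes r$. The only detail you left implicit --- which idempotent plays the role of $g$ --- is precisely this $\chi_0 \hltimes 1$, so there is no gap.
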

\begin{proof} Apply Lemma \ref{lem:v-cor-mono} to $\Z \hltimes R$ and the idempotent 
$g = \chi_0 \hltimes 1$, and notice that $R_0 \simeq g(\Z \hltimes R)g$
via $r \mapsto \chi_0 \hltimes r$.
\end{proof}

\begin{coro} \label{coro:inner-diag}
Suppose that $\ell$ is a field. Let $E$ and $F$ be finite graphs and 
$f,g \colon L(E) \to L(F)$ two 
unital $\Z$-graded maps. If $K_0^{\gr}(f) = K_0^{\gr}(g)$, then for each $L \ge 0$
there exists a degree zero unit $u_L \in L(E)$ such 
that $\ad(u_L) \circ f$ and $g$ 
coincide on $D(E)_L = \mathrm{span}_\ell\{xx^\ast \colon x \in \cR_L \sqcup \cS_L\}$.
\end{coro}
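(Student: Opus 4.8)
The plan is to produce, for each fixed $L$, a homogeneous unit of degree zero in $L(F)$ that conjugates the images under $f$ of the idempotents spanning $D(E)_L$ onto their images under $g$. Write $p_x = xx^\ast$ for $x \in \cR_L(E) \sqcup \cS_L(E)$. By Lemma \ref{lem:pathsort} these are pairwise orthogonal homogeneous idempotents of degree zero, and by Lemma \ref{lem:1-paths} they satisfy $\sum_x p_x = 1$ and span $D(E)_L$. Since $f$ and $g$ are unital graded homomorphisms, $\{f(p_x)\}_x$ and $\{g(p_x)\}_x$ are two complete families of pairwise orthogonal idempotents of $L(F)_0$, completeness holding because $\sum_x f(p_x) = f(1) = 1 = g(1) = \sum_x g(p_x)$. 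Once a degree-zero unit $u_L \in L(F)$ with $u_L f(p_x) u_L^{-1} = g(p_x)$ for every $x$ is found, linearity will give that $\ad(u_L) \circ f$ and $g$ agree on all of $D(E)_L$.

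First I would show that $f(p_x)$ and $g(p_x)$ are Murray--von Neumann equivalent \emph{inside} $L(F)_0$. Because $K_0^{\gr}(f) = K_0^{\gr}(g)$ we have $[f(p_x)] = K_0^{\gr}(f)([p_x]) = K_0^{\gr}(g)([p_x]) = [g(p_x)]$ in $K_0^{\gr}(L(F))$. By Remark \ref{rmk:vgr} this group is the group completion of $\cV_\infty(\Z \hltimes L(F)) \cong \cV^{\gr}(L(F))$, and this monoid is cancellative: it is the talented monoid $\gBF(F)_+$, a submonoid of the group $\gBF(F)$, the cancellativity ultimately reflecting that $\Z \hltimes L(F)$ is ultramatricial. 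Hence $\cV_\infty(\Z \hltimes L(F))$ embeds into $K_0^{\gr}(L(F))$, so $f(p_x)$ and $g(p_x)$ already define the same class in $\cV_\infty(\Z \hltimes L(F))$. Since $f(p_x), g(p_x) \in L(F)_0$ and, by Corollary \ref{coro:deg0-inj}, the canonical map $\cV(L(F)_0) \to \cV_\infty(\Z \hltimes L(F))$ is injective, it would follow that $f(p_x) \sim g(p_x)$ as idempotents of $L(F)_0$, the equivalence being witnessed by elements of degree zero.

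With these equivalences in hand, the unit is assembled in the standard way. For each $x$ one chooses $a_x, b_x \in L(F)_0$ with $a_x b_x = g(p_x)$ and $b_x a_x = f(p_x)$, and replaces them by the normalized partial isometries $a_x' = g(p_x) a_x f(p_x)$ and $b_x' = f(p_x) b_x g(p_x)$, so that $a_x' b_x' = g(p_x)$, $b_x' a_x' = f(p_x)$, $a_x' = g(p_x) a_x' f(p_x)$ and $b_x' = f(p_x) b_x' g(p_x)$. Setting $u_L = \sum_x a_x'$ and $w = \sum_x b_x'$ in $L(F)_0$, orthogonality and completeness yield $u_L w = \sum_x g(p_x) = 1$ and $w u_L = \sum_x f(p_x) = 1$, so $u_L$ is a degree-zero unit with inverse $w$; the same computation gives $u_L f(p_x) u_L^{-1} = g(p_x)$ for each $x$. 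As $u_L$ is homogeneous of degree zero, $\ad(u_L)$ is graded, and $\ad(u_L) \circ f$ then coincides with $g$ on $D(E)_L$, as required.

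The main obstacle is the middle paragraph: passing from equality of classes in $K_0^{\gr}(L(F))$ to a genuine Murray--von Neumann equivalence inside the degree-zero subalgebra $L(F)_0$. This is precisely the step made possible by Corollary \ref{coro:deg0-inj} together with the cancellativity of $\cV^{\gr}(L(F))$; without the latter one would obtain only a stable equivalence, which is insufficient to build a unit of $L(F)$ itself.
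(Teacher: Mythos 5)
Your proof is correct and takes essentially the same route as the paper: cancellativity of $\cV^{\gr}(L(F)) \cong \cV_\infty(\Z \hltimes L(F))$ over a field, combined with Corollary \ref{coro:deg0-inj}, gives injectivity of $\cV(L(F)_0) \to K_0^{\gr}(L(F))$, hence Murray--von Neumann equivalences $f(xx^\ast) \sim g(xx^\ast)$ witnessed inside $L(F)_0$, from which $u_L$ is assembled as a sum of normalized partial isometries exactly as in the paper (whose $q_x \in g(xx^\ast)L(F)_0 f(xx^\ast)$ are your $a_x'$). The only differences are cosmetic: the paper quotes a citation for cancellativity where you sketch the underlying reason (ultramatriciality of the crossed product), and it outsources the verification that $u_L$ is a unit conjugating $f(xx^\ast)$ to $g(xx^\ast)$ to a cited lemma, which you instead write out in full.
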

\begin{proof} Since $\cV^{\gr}(L(F))$ is a cancellative monoid when $\ell$ is a field by \cite[Corollary 5.8]{stein}, using Remark \ref{rmk:vgr} and
Corollary \ref{coro:deg0-inj} we get that the map $\cV(L(F)_0) \to K_0^{\gr}(L(F))$
is injective. 
Since $[f(xx^\ast)] = [g(xx^\ast)]$ in $K_0^{\gr}(L(F))$ for each
$x \in X := \cR_L \sqcup \cS_L$, there exist $p_x \in f(xx^\ast)L(F)_0 g(xx^\ast)$, 
$q_x \in g(xx^\ast) L(F)_0 f(xx^\ast)$ such that $p_xq_x = f(xx^\ast)$, 
$q_x p_x = g(xx^\ast)$. Now put $u_L := \sum_{x \in X}q_x$
and proceed as in \cite[Lemma 9.7]{kkhlpa}.
\end{proof}

\begin{lem} \label{lem:paths-gbf}
Let $E$ be a finite graph and $\can$ as in \eqref{def:can}. 
If $\alpha$ is a path of length
$N \in \N_0$ in $E$, then
\[
\can(\sigma^N r(\alpha)) =
[\alpha\alpha^\ast].
\]
\end{lem}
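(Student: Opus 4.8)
The plan is to express the class $\can(\sigma^N r(\alpha))$ through the defining relations of $\gBF(E)$ and the definition of $\can$, and then match it with the class $[\alpha\alpha^\ast]$ computed via graded Murray–von Neumann equivalence. First I would recall that $\can$ is a $\Z[\sigma]$-module homomorphism sending $v \mapsto [v] = [L(E)v]$, so that $\can(\sigma^N r(\alpha)) = \sigma^N [r(\alpha)]$, where the action of $\sigma$ on $K_0^{\gr}(L(E))$ is the degree-shift action. Thus the content of the statement is the identity $\sigma^N[r(\alpha)] = [\alpha\alpha^\ast]$ in $K_0^{\gr}(L(E))$.

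The key step is to identify the right-hand side via a graded Murray–von Neumann equivalence. By Lemma \ref{lem:pathsort}, $\alpha\alpha^\ast$ is a homogeneous idempotent (indeed $\alpha^\ast\alpha = r(\alpha)$ since $|\alpha| = |\alpha|$), so setting $x = \alpha$ and $y = \alpha^\ast$ gives $xy = \alpha\alpha^\ast$ and $yx = r(\alpha)$, exhibiting $\alpha\alpha^\ast \sim r(\alpha)$ as idempotents in $L(E)_0$. The subtlety is the grading: $\alpha$ has degree $N$ and $\alpha^\ast$ has degree $-N$, so this equivalence is implemented by homogeneous elements of nonzero degree, which is precisely what produces the factor $\sigma^N$ when passing to $K_0^{\gr}$. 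I would make this precise using the crossed product description from Remark \ref{rmk:vgr}: in $\Z \hltimes L(E)$ the shift by $\sigma$ corresponds to conjugating the grading degree, and the graded equivalence $L(E)\alpha\alpha^\ast \simeq L(E)r(\alpha)(N)$ of graded projective modules (where $(N)$ denotes the degree shift) translates into $[\alpha\alpha^\ast] = \sigma^N[r(\alpha)]$ in $K_0^{\gr}(L(E))$.

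Concretely, I would verify that the right multiplication map $L(E)\alpha\alpha^\ast \to L(E)r(\alpha)$, $z \mapsto z\alpha$, together with its inverse $w \mapsto w\alpha^\ast$, is an isomorphism of graded modules after applying the appropriate degree shift, owing to the relations $\alpha^\ast\alpha = r(\alpha)$ and $(\alpha\alpha^\ast)\alpha = \alpha\,r(\alpha) = \alpha$. The bookkeeping of the shift is the only place where care is needed: one must check that multiplying by $\alpha$ on the right raises internal degree by $N$, so that the identification is between $L(E)\alpha\alpha^\ast$ and the $N$-fold shift of $L(E)r(\alpha)$, yielding the $\sigma^N$ factor rather than a trivial shift or a shift by $-N$.

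The main obstacle I anticipate is getting the direction and normalization of the grading shift correct, i.e.\ confirming that the convention for the $\Z[\sigma]$-action on $K_0^{\gr}$ and the convention for graded module shifts combine to give $\sigma^N$ rather than $\sigma^{-N}$. This is a matter of tracing the definitions fixed in \cite{arcor} and in Remark \ref{rmk:vgr} carefully; once the conventions are pinned down, the computation is immediate from the path relations in Lemma \ref{lem:pathsort}. An alternative, perhaps cleaner, route would be to argue by induction on $N$: for $N=0$ the claim is the definition of $\can$, and the inductive step would factor $\alpha = e\alpha'$ with $e$ an edge, reducing the degree by one and contributing a single factor of $\sigma$ via the defining relation $v - \sigma\sum_{e \in s^{-1}(v)} r(e)$ of $\gBF(E)$; I would keep this as a fallback if the direct graded-equivalence argument becomes notationally heavy.
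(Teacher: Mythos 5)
Your proposal is correct and is essentially the paper's own proof: the paper likewise reduces the statement to the Murray--von Neumann equivalence between $r(\alpha)$ and $\alpha\alpha^\ast$ implemented by $\alpha$ and $\alpha^\ast$, made degree-aware by passing to the crossed product $\Z \hltimes L(E)$ of Remark \ref{rmk:vgr}, where $\chi_0 \hltimes \alpha$ and $\chi_N \hltimes \alpha^\ast$ exhibit the equivalence of $\chi_N \hltimes r(\alpha)$ with $\chi_0 \hltimes \alpha\alpha^\ast$. The grading-convention issue you flag is handled there by citing the identification (Theorem 5.2 of the reference defining $\can$) of $\can(\sigma^N r(\alpha))$ with the class of the idempotent $\chi_N \hltimes r(\alpha)$, which pins down the factor $\sigma^N$ exactly as your graded-module-shift computation does.
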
 
\begin{proof}  Via the identification $K_0^{\gr}(L(E)) = \cV_\infty(\Z \hltimes L(E))^+$, the element $[L(E)\alpha\alpha^\ast] \in K_0^{\gr}(L(E))$ corresponds to the class  of the idempotent
$\chi_0 \hltimes \alpha \alpha^\ast \in \Z \hltimes L(E)$.
By \cite[Theorem 5.2]{arcor}, the
element $\can(\sigma^N r(\alpha))$ corresponds to the class
of the idempotent element $\chi_{n} \hltimes v \in \Z \hltimes L(E)$.
It suffices then to note that the elements $\chi_{0} \hltimes \alpha$,
$\chi_{n} \hltimes \alpha^\ast \in \Z \hltimes L(E)$ implement a Murray-von Neumann equivalence between $\chi_n \hltimes v$ and $\chi_0 \hltimes \alpha\alpha^\ast$.
\end{proof}

\section{A characterization of Bowen-Franks modules}\label{sec:gbf}

Let $E$ be a finite graph.
We first recall from \cite{arcor} a characterization of $\gBF(E)$ and establish
further properties of this presentation.
For each $k \in \Z$ and $v \in E^0$, write $v_k = (v,k)$.
Given $x = \sum_{v \in E^0} x_v v \in \Z^{E^0}$ we will
write
\[
x \otimes \sigma^k = \sum_{v \in E^0} x_v v_k \in \Z^{E^0 \times \{k\}}.
\]

Put
$V_n := \{u_i : u \in \sink(E), \ |i| \leq n\} \cup \{w_n : w \in \reg(E)\}$
for each $n \geq 0$ and define $\Z$-linear maps
$\tmap_n \colon \Z^{V_n} \to \Z^{V_{n+1}}$ via
\[
\tmap_n(u_i) := u_i, \qquad \tmap_n(w_n) := \sum_{v \in E^0} (A_E)_{w,v} v_{n+1}.
\]
The maps above form an $\N_0$-indexed filtered system of abelian groups. Given
two non-negative integers $i < j$, we will write
$\tmap_{i,j} = \tmap_{j-1} \circ \cdots \circ \tmap_{i}$ for the transition map. We also put $\iota_{i,i} = \id_{\Z^{V_i}}$.
The following proposition follows from \cite[proof of Theorem 5.2]{arcor}.

\begin{prop} \label{prop:gbf-colim}
Let $E$ be a finite graph. There is a $\Z[\sigma]$-module
isomorphism
\begin{equation}\label{eq:iso-gbf-colim}
    \gBF(E) \cong \colim (\Z^{V_0} \xto{\tmap_0} \Z^{V_1} \to \cdots),
    \qquad \sigma^n v \mapsto [v_n]
\end{equation}
that maps $\gBF(E)_+$ to
$\colim(\N_0^{V_0} \xto{\tmap_0} \N_0^{V_1} \to \cdots)$ and $1_E$ to
$\sum_{v \in E^0} [v_0]$.
\qed
\end{prop}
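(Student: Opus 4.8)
The plan is to build the isomorphism directly from the defining presentation of $\gBF(E)$ as a cokernel, reconciling it with the colimit of the system $(\Z^{V_n}, \iota_n)$. First I would observe that the colimit on the right is, by construction, a filtered colimit of free abelian groups with an action of $\sigma$ to be specified; the grading action should send $[v_n]$ to $[v_{n+1}]$ at the level of the colimit, since the generator $\sigma$ raises the ``degree'' index by one. Concretely, I would define $\sigma \cdot [v_n] := [v_{n+1}]$ where for a sink $u$ the element $u_{n+1}$ already lives in $\Z^{V_{n+1}}$, and for a regular vertex $w$ we use that $\iota_n(w_n) = \sum_v (A_E)_{w,v} v_{n+1}$ so that the relation $w_n \sim \sum_v (A_E)_{w,v} v_{n+1}$ holds in the colimit. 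This is exactly the relation $w = \sigma \sum_{e \in s^{-1}(w)} r(e)$ defining $\gBF(E)$, transported along $\sigma^{-n}$.

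Next I would exhibit the map in the stated direction, $\gBF(E) \to \colim_n \Z^{V_n}$, sending $\sigma^n v \mapsto [v_n]$. To see this is well-defined I must check it kills the defining relations $v - \sigma \sum_{e \in s^{-1}(v)} r(e)$ for $v \in \reg(E)$: under the proposed assignment this relation maps to $[v_0] - \sum_{e} [r(e)_1]$, and since $v \in \reg(E)$ we have $v_0 \sim \iota_0(v_0) = \sum_w (A_E)_{v,w} w_1 = \sum_{e \in s^{-1}(v)} [r(e)_1]$ in the colimit, so the image is zero. That the map is $\Z[\sigma]$-linear is immediate from the definition of the $\sigma$-action above. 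For the inverse, I would send each generator $[v_n]$ of $\Z^{V_n}$ to $\sigma^n v \in \gBF(E)$ and verify compatibility with the transition maps $\iota_n$, which amounts to checking $\iota_n(w_n) \mapsto \sigma^{n+1}\sum_w (A_E)_{w,v} v = \sigma^n(w)$ precisely because of the $\gBF(E)$-relation; this induces a well-defined map out of the colimit, and the two maps are mutually inverse on generators.

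Finally I would track the extra structure. For the order unit, the computation is direct: $1_E = \sum_{v} v \mapsto \sum_v [v_0]$. For the positive cone, I would argue that $\gBF(E)_+$ is generated as a monoid by the $\sigma^n v$ (with $n \in \Z$, but shifting down by $\sigma^{-1}$ lands everything in nonnegative indices in the colimit), and these map precisely onto the images of the standard basis vectors of the $\N_0^{V_n}$, so the isomorphism restricts to a monoid isomorphism $\gBF(E)_+ \cong \colim \N_0^{V_n}$. The main obstacle I anticipate is the bookkeeping for negative powers of $\sigma$: the colimit system as written only records nonnegative degree shifts for regular vertices, so I would need to check that inverting $\sigma$ is already built into the colimit via the sink coordinates $u_i$ with $|i| \le n$ and via the fact that $\sigma$ acts invertibly on the colimit once the regular-vertex relations are imposed. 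Since the proposition is stated to follow from the proof of \cite[Theorem 5.2]{arcor}, I expect most of these verifications to be available there, and the task reduces to matching conventions and confirming the order-unit and positive-cone claims.
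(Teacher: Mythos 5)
Your proposal is correct, and it takes a genuinely different route from the paper: the paper gives no argument at all for this proposition, deferring it entirely to the proof of Theorem 5.2 of \cite{arcor}, whereas you reconstruct the isomorphism directly from the cokernel presentation of $\gBF(E)$. Your two maps, $\sigma^n v \mapsto [v_n]$ and $[v_n] \mapsto \sigma^n v$, together with the compatibility checks against the transition maps $\tmap_n$, hinge on the single relation $w = \sigma \sum_{e \in s^{-1}(w)} r(e)$, and these are exactly the verifications needed; what your version buys is self-containedness, while the paper's citation avoids repeating work already done in \cite{arcor}, where this colimit arises in the computation of $K_0^{\gr}$. Two of the steps you defer should, however, be carried out rather than pushed back to \cite{arcor}, since they are the crux of the statement. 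First, the invertibility of the $\sigma$-action, which you rightly flag as the main obstacle: define $\sigma^{-1}[u_i] = [u_{i-1}]$ on sink coordinates and $\sigma^{-1}[w_n] = \sum_{v \in E^0} (A_E)_{w,v}[v_n]$ on regular coordinates; compatibility with $\tmap_n$ and the identities $\sigma\sigma^{-1} = \sigma^{-1}\sigma = \id$ follow from the same relation, and without this one cannot even speak of a $\Z[\sigma]$-linear map out of the free module $\Z[\sigma]^{E^0}$. Second, your positive-cone argument is stated too loosely: the generators $\sigma^{-k} w$ for $w \in \reg(E)$ and $k > 0$ do \emph{not} map to basis classes but to nonnegative combinations of them (iterate the formula for $\sigma^{-1}$ above), so the correct argument is a two-sided inclusion — every monoid generator of $\gBF(E)_+$ lands in $\colim \N_0^{V_n}$, and every basis class $[u_i]$, $[w_n]$ is the image of a generator, whence the two submonoids coincide. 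With these two computations written out, your proof is complete and makes the proposition independent of \cite{arcor}.
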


\begin{rmk}\label{rmk:filtercolim} Let $E$ be a finite graph.
Since $\gBF(E)$ is a filtering
colimit, it follows that if $x \in \Z^{V_n}$ and $y \in \Z^{V_m}$ are such that
$[x] = [y]$ in $\gBF(E)$ then there exists $k_0 \geq n,m$ such that
$\tmap_{n,k}(x) = \tmap_{m,k}(y)$ for all $k \geq k_0$.
Moreover; if $x_1 \in \Z^{V_{n_1}}, \ldots, x_j \in \Z^{V_{n_j}},
y_1 \in \Z^{V_{m_1}}, \ldots, y_j \in \Z^{V_{m_j}}$ are such that
$[x_i] = [y_i]$ for all $i \in \{1, \ldots, j\}$, there exists
$k_0 \geq n_1, \ldots, n_j, m_1, \ldots, m_j$ such that $\tmap_{n_i, k}(x_i)
= \tmap_{n_j, k}(y_i)$ for all $1 \leq i \leq j$ and $k \geq k_0$.
\end{rmk}

\begin{defn} \label{defn:BE-CE}
Let $E$ be a finite graph. We will write $B_E \in \Z^{\reg(E) \times \reg(E)}$,
and $C_E \in \Z^{\sink(E) \times \reg(E)}$ for the matrices obtained
from projecting $A_E^t$ onto $\Z^{\reg(E)}$ and $\Z^{\sink(E)}$ respectively,
\[
(B_E)_{v,w} = (A_E)_{w,v}, \qquad (C_E)_{v,u} = (A_E)_{u,v} \qquad
(u \in \sink(E), \ v,w \in \reg(E)).
\]
In particular we have $A_E^t x = B_E x  + C_E x$ for each $x \in \Z^{\reg(E)}$
and if $s^{(-N)}, \ldots, s^{(N)} \in \Z^{\sink(E)}$, $r \in \Z^{\reg(E)}$ then
\begin{equation}\label{eq:s-vec}
    \tmap_N(\sum_{i=-N}^N s^{(i)} \otimes \sigma^i + r \otimes \sigma^N) =
\sum_{i=-N}^N s^{(i)} \otimes \sigma^i + A_E^t r \otimes \sigma^{N+1} =
\sum_{i=-N}^N s^{(i)} \otimes \sigma^i + C_E r \otimes \sigma^{N+1} + B_E r \otimes \sigma^{N+1}.
\end{equation}
\end{defn}

The following identities are straightforward
from \eqref{eq:s-vec}.

\begin{lem} \label{lem:iterating}
Let $E$ be a finite graph and $N \in \N$. Let
$s^{(-N)}, \ldots, s^{(N)} \in \Z^{\sink(E)}$,
$r \in \Z^{\reg(E)}$. If $x = \sum_{i=-N}^N s^{(i)} \otimes \sigma^i
+ r \otimes \sigma^N \in \Z^{V_N}$, then:
\begin{itemize}
    \item[(i)] $\sigma^{-1}[x] = \sum_{i=-N}^N [s^{(i)} \otimes \sigma^{i-1}] +
    [A_E^t r \otimes \sigma^N]$ in $\gBF(E)$.
    \item[(ii)] If $M \geq 0$, then
    \[
        \tmap_{N,N+M}(x) =
        \sum_{i=-N}^N s^{(i)}\otimes \sigma^i +
        \sum_{j=0}^{M-1} C_E B_E^{j}r \otimes \sigma^{N+j+1}
        + B_E^{M}r \otimes \sigma^{N+M}.
    \]
    \item[(iii)] If $M \geq 0$, then
    \[
        \tmap_{N,N+M}(A_E^t r \otimes \sigma^N) =
        \sum_{j=0}^{M} C_E B_E^{j}r \otimes \sigma^{N+j}
        + B_E^{M+1} r \otimes \sigma^{N+M}
    \]
\end{itemize}
\qed
\end{lem}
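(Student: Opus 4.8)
The plan is to derive all three identities directly from the defining recursion \eqref{eq:s-vec} for $\tmap_N$, exactly as the statement suggests when it says they are ``straightforward from \eqref{eq:s-vec}''. The key observation is that \eqref{eq:s-vec} tells us precisely what one application of $\tmap_N$ does: it fixes all the sink-supported components $s^{(i)} \otimes \sigma^i$ (since $\tmap_N(u_i) = u_i$ for $u \in \sink(E)$ and $|i| \le N$), and it sends the top regular component $r \otimes \sigma^N$ to $C_E r \otimes \sigma^{N+1} + B_E r \otimes \sigma^{N+1}$, thereby splitting the image of $A_E^t r$ into a permanently-fixed sink part $C_E r$ and a new top regular part $B_E r$ that will be fed back into the next application.

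For part (i), I would use the isomorphism of Proposition \ref{prop:gbf-colim} together with the grading convention $\sigma^n v \mapsto [v_n]$. Since $\sigma^{-1}$ acts on the colimit by lowering the grading index, I would compute $\sigma^{-1}[x]$ by writing $x$ in terms of its components and shifting: the sink components $s^{(i)} \otimes \sigma^i$ become $s^{(i)} \otimes \sigma^{i-1}$, while the regular component $r \otimes \sigma^N$ becomes $r \otimes \sigma^{N-1}$, which after applying $\tmap_{N-1}$ (to express it at the correct level) yields $A_E^t r \otimes \sigma^N$; alternatively one checks the relation $v - \sigma A_E^t v$ directly from the module presentation of $\gBF(E)$. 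For parts (ii) and (iii), the strategy is a straightforward induction on $M$. The base case $M = 0$ is immediate (for (ii) it is the identity $\tmap_{N,N} = \id$, and for (iii) it is exactly one application of $\tmap_N$ giving $C_E r \otimes \sigma^N + B_E r \otimes \sigma^{N}$ once indices are matched). For the inductive step I would apply $\tmap_{N+M}$ to the expression at level $N+M$: the sink components and the already-accumulated terms $C_E B_E^j r \otimes \sigma^{N+j+1}$ are fixed, while the single top regular term $B_E^M r \otimes \sigma^{N+M}$ gets split by \eqref{eq:s-vec} into $C_E B_E^M r \otimes \sigma^{N+M+1}$ (a new permanent sink contribution) plus $B_E^{M+1} r \otimes \sigma^{N+M+1}$ (the new top regular term), which reproduces the formula with $M$ replaced by $M+1$.

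The main obstacle, such as it is, will be bookkeeping of the grading indices rather than any conceptual difficulty: one must be careful that at each stage the ``top'' regular vector lives in degree $N+M$ and that applying $\tmap_{N+M}$ raises it to degree $N+M+1$, matching the exponents in the stated sums. In particular the telescoping structure of the $C_E B_E^j r$ terms must line up so that the upper index of the summation advances by exactly one at each step. Since the recursion $\tmap$ touches only the unique top regular component and freezes everything else, the induction closes cleanly and no genuine combinatorial complication arises.
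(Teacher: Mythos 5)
Your proposal is correct and follows exactly the route the paper intends: the paper gives no written proof beyond declaring the identities ``straightforward from \eqref{eq:s-vec}'', and your induction on $M$ (sink components frozen, top regular component split by $\tmap$ into $C_E$- and $B_E$-parts) together with the colimit/presentation argument for (i) is precisely that verification spelled out. One trivial quibble: the base case $M=0$ of (iii) uses no application of $\tmap_N$ at all (since $\tmap_{N,N}=\id$), only the decomposition $A_E^t r = C_E r + B_E r$; the expression you wrote is nevertheless the right one.
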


We now turn to characterizing the representatives
of the order unit of $\gBF(E)$ for a finite graph $E$.
For each $n\geq 0$, let
\begin{equation}\label{def:ordunit}
    \mathfrak{u}_n := \tmap_{0,n}(1_E) \in \Z^{V_n}.
\end{equation}

\begin{lem} \label{lem:colim-ordunit} Let $E$ be a finite graph. For each $N \in \N_0$,
\begin{equation}\label{eq:colim-ordunit}
\mathfrak{u}_N = \sum_{u \in \sink(E)} \sum_{i=0}^N \#E_{u}^i u_i +
\sum_{v \in \reg(E)} \#E_{v}^N v_N.
\end{equation}
\end{lem}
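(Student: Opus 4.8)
The plan is to induct on $N$ using the explicit recursion defining the transition maps $\tmap_n$. Recall that $V_0 = \{v_0 : v \in E^0\}$ and $1_E = \sum_{v \in E^0} v_0$, so $\mathfrak{u}_0 = \tmap_{0,0}(1_E) = 1_E$. Since every vertex is the range of a unique path of length zero, $\#E^0_v = 1$ for all $v \in E^0$, and the right-hand side of \eqref{eq:colim-ordunit} at $N = 0$ equals $\sum_{u \in \sink(E)} u_0 + \sum_{v \in \reg(E)} v_0 = \sum_{v \in E^0} v_0 = 1_E$. This settles the base case.

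For the inductive step, I would suppose \eqref{eq:colim-ordunit} holds for some $N$ and apply $\tmap_N$ to both sides, using $\mathfrak{u}_{N+1} = \tmap_N(\mathfrak{u}_N)$. By definition $\tmap_N$ fixes each sink generator $u_i$ with $|i| \le N$, so the entire sink block $\sum_{u \in \sink(E)} \sum_{i=0}^N \#E^i_u u_i$ is carried over unchanged. On the regular block, $\tmap_N(v_N) = \sum_{w \in E^0} (A_E)_{v,w} w_{N+1}$, so $\sum_{v \in \reg(E)} \#E^N_v v_N$ is sent to $\sum_{w \in E^0} c_w\, w_{N+1}$ with $c_w = \sum_{v \in \reg(E)} \#E^N_v (A_E)_{v,w}$.

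The one substantive point is to identify $c_w$ with $\#E^{N+1}_w$. Using \eqref{eq:powers-paths} and $\cA_E^{N+1} = \cA_E^N \cA_E$, one has $\#E^{N+1}_w = \sum_{v \in E^0} \#E^N_v (\cA_E)_{v,w}$; since a sink has no outgoing edges, $(\cA_E)_{v,w} = 0$ whenever $v \in \sink(E)$, so the sum restricts to regular $v$, where $(A_E)_{v,w} = (\cA_E)_{v,w}$. Hence $c_w = \#E^{N+1}_w$. Splitting $\sum_{w \in E^0} \#E^{N+1}_w w_{N+1}$ into sink and regular parts, the regular part $\sum_{w \in \reg(E)} \#E^{N+1}_w w_{N+1}$ is the desired regular block for $N+1$, while the sink part $\sum_{u \in \sink(E)} \#E^{N+1}_u u_{N+1}$ combines with the carried-over block to give $\sum_{u \in \sink(E)} \sum_{i=0}^{N+1} \#E^i_u u_i$, yielding \eqref{eq:colim-ordunit} for $N+1$. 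The argument is elementary throughout; the only place requiring attention is the vanishing $(\cA_E)_{v,w} = 0$ for sinks, which is exactly what makes the reduced matrix $A_E$ the correct object for extending paths by one edge.
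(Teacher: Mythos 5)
Your proof is correct. It is organized differently from the paper's: the paper does not induct on the statement directly, but instead applies Lemma \ref{lem:iterating} (ii) once to obtain the closed form
\[
\mathfrak{u}_N = 1_{\sink(E)} \otimes \sigma^0 + \sum_{j=0}^{N-1} C_E B_E^{j} 1_{\reg(E)} \otimes \sigma^{j+1} + B_E^{N} 1_{\reg(E)} \otimes \sigma^N,
\]
and then identifies the coefficients with path counts by computing the powers of the block matrix $\cA_E^t = \begin{pmatrix} 0 & C_E\\ 0 & B_E\end{pmatrix}$ and invoking \eqref{eq:powers-paths}. Your direct induction replaces both steps by the one-step recursions $\mathfrak{u}_{N+1} = \tmap_N(\mathfrak{u}_N)$ and $\#E^{N+1}_{w} = \sum_{v \in \reg(E)} \#E^N_{v}\,(A_E)_{v,w}$, the latter resting on the observation that sinks emit no edges --- which is the same fact the paper encodes in the vanishing first column block of $\cA_E^t$. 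What your route buys is self-containment: it bypasses Lemma \ref{lem:iterating} and the matrices $B_E$, $C_E$ of Definition \ref{defn:BE-CE} entirely, using only the definition of $\tmap_n$ and \eqref{eq:powers-paths}. What the paper's route buys is alignment with later arguments: the expressions $C_E B_E^{i-1}$ and $B_E^N$ are exactly the quantities that reappear in Lemma \ref{lem:no-neg} and Proposition \ref{prop:bf-map-mat}, so proving the lemma in that language avoids re-deriving the block-power formula there. The computational content of the two proofs is the same.
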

\begin{proof} Since
the lemma holds for $N = 0$ by definition of $\mathfrak{u}_0$ and $1_E$,
we may suppose that $N \geq 1$.

By Lemma \ref{lem:iterating} (ii), we have
to show that $\#E^i_{u} = (C_E B_E^{i-1} 1_{\reg(E)})_u$
for each $u \in \sink(E)$, $i \in \{1,\ldots, N\}$ and
$\#E^N_{v} = (B_E^N 1_{\reg(E)})_v$ for each $v \in \reg(E)$.

Observe that viewed as an endomorphism of $\Z^{\sink(E)} \oplus \Z^{\reg(E)}$, the
matrix $\cA_E^t$ has the form $\cA_E^t = \begin{pmatrix}
0 & C_E\\0 & B_E\end{pmatrix}$ and inductively,
\[
(\cA_E^t)^i = \begin{pmatrix}
0 & C_E B_E^{i-1}\\0 &B_E^i\end{pmatrix} \qquad (1 \leq i \leq N).
\]
To conclude we note that Equation \eqref{eq:powers-paths}
can be restated as $\#E^i_{w} = ((\cA_E^t)^i 1_E)_w$
for each vertex $w \in E^0$.
\end{proof}

\begin{lem}\label{lem:no-neg}Let $E$ be a finite graph and $N \in \N$. Let
$s^{(-N)}, \ldots, s^{(N)} \in \Z^{\sink(E)}$, $r \in \Z^{\reg(E)}$
and $x = \sum_{i = -N}^N  s^{(i)} \otimes \sigma^i +
\sum_{v \in \reg(E)} r \otimes \sigma^N$.
If $[1_E] = [x]$ in $\gBF(E)$, then $s^{(i)} = 0$ for all $i \in \{-N, \ldots, -1\}$.
\end{lem}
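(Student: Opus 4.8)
The plan is to push the equality $[1_E]=[x]$ forward into a single group $\Z^{V_k}$ for $k$ large and then compare coordinates in negative degrees. Since $\gBF(E)$ is a filtering colimit, Remark \ref{rmk:filtercolim} (applied to $1_E \in \Z^{V_0}$ and $x \in \Z^{V_N}$) yields some $k \ge N$ with $\tmap_{0,k}(1_E) = \tmap_{N,k}(x)$ as an equality in $\Z^{V_k}$. The whole argument rests on the observation that these two sides have very different support in negative degrees.

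First I would compute the left-hand side. By definition $\tmap_{0,k}(1_E) = \mathfrak u_k$, and Lemma \ref{lem:colim-ordunit} expresses it as in \eqref{eq:colim-ordunit}. The crucial point is that $\mathfrak u_k$ is supported in \emph{non-negative} degrees: every sink basis vector $u_i$ ($u \in \sink(E)$) occurring there has $i \ge 0$, and the regular part lives in degree $k$. In particular, for every $u \in \sink(E)$ and every $i < 0$, the $u_i$-coordinate of $\mathfrak u_k$ vanishes.

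Next I would compute the right-hand side, using that the transition maps leave the negative-degree sink coordinates untouched. Indeed $\tmap_n(u_i) = u_i$ for $u \in \sink(E)$, so iterating (Lemma \ref{lem:iterating}(ii) with $M = k-N$) the summand $\sum_{i=-N}^{-1} s^{(i)} \otimes \sigma^i$ of $x$ survives unchanged in $\tmap_{N,k}(x)$, while all newly produced terms sit in degrees $\ge N+1 > 0$. Hence, for each $i \in \{-N,\dots,-1\}$, the $u_i$-coordinate of $\tmap_{N,k}(x)$ equals $(s^{(i)})_u$.

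Equating coordinates in $\Z^{V_k}$ in these negative degrees then forces $(s^{(i)})_u = 0$ for all $u \in \sink(E)$ and all $i \in \{-N,\dots,-1\}$, that is $s^{(i)} = 0$ for $i \in \{-N, \dots, -1\}$, as claimed. I expect no genuine obstacle here: the content is simply the asymmetry between the colimit representative $\mathfrak u_k$ of the order unit, which never acquires negative-degree support, and the arbitrary representative $x$, whose negative-degree sink part is rigidly preserved by the transition maps. The filtering property of the colimit is exactly what allows one to confront the two in a single $\Z^{V_k}$.
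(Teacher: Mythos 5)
Your proposal is correct and follows essentially the same route as the paper's own proof: both use Remark \ref{rmk:filtercolim} to realize the equality $[1_E]=[x]$ in a single $\Z^{V_k}$, then exploit that $\mathfrak{u}_k$ has no negative-degree sink coordinates (Lemma \ref{lem:colim-ordunit}) while the transition maps carry the negative-degree part $\sum_{i=-N}^{-1} s^{(i)}\otimes\sigma^i$ of $x$ forward unchanged, forcing those coordinates to vanish. The paper phrases the final step as a support argument (the negative-degree part lies in $\Z^{P}\cap\Z^{V_{N+M}\setminus P}=0$ for a suitable coordinate subset $P$), which is just a reformulation of your coordinate comparison.
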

\begin{proof} By Remark \ref{rmk:filtercolim}, there exists $M \in \N$
such that $\tmap_{N,M}(x) = \tmap_{0,N+M}(1_E) = \mathfrak{u}_{N+M}$ in $\Z^{V_{N+M}}$.
Hence $\mathfrak{u}_{N+M} = \tmap_{N,M}(x) = \sum_{i = -N}^N s^{(i)} \otimes \sigma^i
+ \tmap_{N,M}(r \otimes \sigma^N)$.
Writing $P = V_{N+M} \setminus (\sink(E) \times \{-(N+M), \ldots, -1\})$, notice
that $\mathfrak{u}_{N+M}, \tmap_{N,M}(r \otimes \sigma^N)$ and $s^{(i)} \otimes \sigma^i$
for non-negative
$i$ belong to $\Z^{P}$.
Thus,
\[
    \sum_{i = -N}^{-1} s^{(i)} \otimes \sigma^i =
    \mathfrak{u}_{N+M} - \tmap_{N,M}(r \otimes \sigma^N)
    - \sum_{i = 0}^{N} s^{(i)} \otimes \sigma^i
    \in \Z^{P} \cap \Z^{V_{N+M} \setminus P} = 0.
\]
This completes the proof.
\end{proof}

\section{Maps between Bowen-Franks modules}\label{sec:maps-gbf}

The main result of this section is the following characterization
of morphisms between Bowen-Franks modules.

\begin{prop}\label{prop:bf-map-mat} Let $E$ and $F$ be a finite graphs and
$\phi \colon \gBF(E) \to \gBF(F)$ a morphism of preordered $\Z[\sigma]$-modules
such that $\phi(1_E) = 1_F$.

For each $L_0 \in \N_0$ 
there exist $L \ge L_0$ and
matrices $S^{(0)}, S^{(1)}, \ldots, S^{(L)} \in \N_0^{E^0 \times \sink(F)}$,
$R \in \N_0^{E^0 \times \reg(F)}$ such that, for each $v \in E^0$,
\begin{equation}\label{eq:bf-mor-mat}
    \phi([v]) = \sum_{u \in \sink(F)} \sum_{i=0}^L S^{(i)}_{v,u} [u_i] +
    \sum_{w \in \reg(F)}R_{v,w} [w_L].
\end{equation}
Moreover, for each $v \in \reg(E), w \in \reg(F), u \in \sink(F), i \in \{0,\ldots,L\}$,
we have the following equations:
\begin{align}
&\#F^L_{w} = \sum_{z \in E^0} R_{z,w}, \quad
\#F^i_{u} = \sum_{z \in E^0} S^{(i)}_{z,u},\label{eq:bf-mor-P}\\
&S^{(0)}_{v,u} = 0, \quad S^{(i)}_{v,u} =
(A_E S^{(i-1)})_{v,u} \ (1 \leq i \leq L), \quad
(A_E S^{(L)})_{v,u} = (RA_F)_{v,u},\label{eq:bf-mor-S}\\
&(A_E R)_{v,w} = (RA_F)_{v,w}.\label{eq:bf-mor-AER=RAF}
\end{align}
\end{prop}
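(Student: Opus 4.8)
The plan is to work throughout in the colimit presentation of $\gBF(F)$ furnished by Proposition \ref{prop:gbf-colim}. For each vertex $v \in E^0$ one has $[v] \in \gBF(E)_+$, and since $\phi$ preserves positive cones, $\phi([v])$ lies in $\gBF(F)_+ = \colim \N_0^{V_n}$ and is therefore represented by a \emph{non-negative} vector at some finite stage. As $E^0$ is finite, I would push all of these representatives forward along the transition maps $\iota_{n}$ (which have non-negative entries) to a single common level $L \ge L_0$, obtaining $y_v \in \N_0^{V_L}$ with $[y_v] = \phi([v])$. Writing $y_v = \sum_{u \in \sink(F)} \sum_{i=-L}^{L} S^{(i)}_{v,u} u_i + \sum_{w \in \reg(F)} R_{v,w} w_L$ defines candidate matrices $S^{(i)} \in \N_0^{E^0 \times \sink(F)}$ and $R \in \N_0^{E^0 \times \reg(F)}$; the task reduces to arranging the vanishing of the negative-index $S^{(i)}$ and the four families of equations.

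Next I would exploit $\phi(1_E) = 1_F$. Summing over $v$ yields $\sum_v [y_v] = \phi([1_E]) = [1_F] = [\mathfrak{u}_L]$ in $\gBF(F)$, so $\sum_v y_v$ and $\mathfrak{u}_L$ represent the same class; after enlarging $L$ (see the final paragraph) I may take $\sum_v y_v = \mathfrak{u}_L$ to be a genuine equality of vectors. Comparing coordinates against the explicit formula for $\mathfrak{u}_L$ in Lemma \ref{lem:colim-ordunit}, which carries no negative-index sink terms, together with $y_v \ge 0$, forces $S^{(i)}_{v,u} = 0$ for all $i < 0$; this is the vanishing underlying \eqref{eq:bf-mor-mat} (precisely the content of Lemma \ref{lem:no-neg}). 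The same comparison at the non-negative sink indices and at the top regular index gives $\sum_z S^{(i)}_{z,u} = \#F^i_u$ and $\sum_z R_{z,w} = \#F^L_w$, i.e.\ Equation \eqref{eq:bf-mor-P}.

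For Equations \eqref{eq:bf-mor-S} and \eqref{eq:bf-mor-AER=RAF} I would use $\Z[\sigma]$-linearity and the defining relations of $\gBF(E)$: for $v \in \reg(E)$ one has $\sigma^{-1}[v] = \sum_{w'} (A_E)_{v,w'} [w']$, so applying $\phi$ gives $\sigma^{-1}[y_v] = \sum_{w'} (A_E)_{v,w'} [y_{w'}] = [z_v]$ with $z_v := \sum_{w'} (A_E)_{v,w'} y_{w'}$. Lemma \ref{lem:iterating}(i) produces the canonical level-$L$ representative $y_v' = \sum_{i=0}^{L} S^{(i)}_{v,\cdot} \otimes \sigma^{i-1} + A_F^t R_{v,\cdot} \otimes \sigma^L$ of $\sigma^{-1}[y_v]$: its sink coordinate at index $i-1$ is $S^{(i)}_{v,\cdot}$, and by the splitting $A_F^t = C_F + B_F$ of Definition \ref{defn:BE-CE} its top sink coordinate is $C_F R_{v,\cdot}$ and its regular coordinate is $B_F R_{v,\cdot}$, which equal the sink and regular parts of $(R A_F)_{v,\cdot}$ respectively; meanwhile $z_v$ has sink coordinates $(A_E S^{(i)})_{v,\cdot}$ and regular coordinate $(A_E R)_{v,\cdot}$. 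Once $y_v' = z_v$ is known as a genuine equality of vectors, matching coordinates index by index gives $S^{(0)}_{v,u} = 0$, the recursion $S^{(i)}_{v,u} = (A_E S^{(i-1)})_{v,u}$ for $1 \le i \le L$, and $(A_E S^{(L)})_{v,u} = (R A_F)_{v,u}$ from the sink indices, which is \eqref{eq:bf-mor-S}, and $(A_E R)_{v,w} = (R A_F)_{v,w}$ from the regular index, which is \eqref{eq:bf-mor-AER=RAF}.

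The crux, and the step I expect to be most delicate, is that all the identities above hold \emph{a priori} only as equalities of classes in the colimit $\gBF(F)$, whereas comparing coordinates requires genuine equalities of vectors in some $\Z^{V_L}$. This is a real issue because the natural map $\Z^{V_L} \to \gBF(F)$ is not injective; in particular the regular coordinate of a representative is well-defined only modulo the kernel of $\Z^{\reg(F)} \to \gBF(F)$, so without care Equation \eqref{eq:bf-mor-AER=RAF} could fail to follow. The resolution is that there are only finitely many identities to enforce, one from the order unit and one for each $v \in \reg(E)$, all between explicit vectors at a fixed stage; since $\gBF(F)$ is a filtered colimit, Remark \ref{rmk:filtercolim} supplies a single level $L \ge L_0$ at which every one of them becomes an actual equality. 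I would accordingly fix $L$ this large already in the first step, observing that the operations involved ($\sigma^{\pm 1}$ and the formation of $y_v'$, $z_v$ and $\mathfrak{u}_L$) all commute with the transition maps, so that enlarging $L$ preserves the shape of the identities. With $L$ so chosen every coordinate comparison above is legitimate, and the proposition follows.
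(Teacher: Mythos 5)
Your proposal is correct and follows essentially the same route as the paper's proof: choose non-negative representatives at a finite stage via Proposition \ref{prop:gbf-colim}, kill the negative sink indices using unitality (Lemma \ref{lem:no-neg}), extract the recursion and intertwining equations from $\Z[\sigma]$-linearity via Lemma \ref{lem:iterating}, and invoke Remark \ref{rmk:filtercolim} to turn the finitely many class equalities into genuine vector equalities at a single level $L$. The commutation-with-transition-maps observation you flag as the crux is exactly what the paper handles by explicitly redefining the pushed-forward data ($r_v := B_F^M \widetilde{r}_v$, $s^{(N+1+j)}_v := C_F B_F^j \widetilde{r}_v$), so the two arguments coincide in substance.
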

\begin{proof} Since $\phi$ is order preserving and $E^0$ 
is finite, by \eqref{eq:iso-gbf-colim}
there exist $N\ge L_0$ 
and a family $(x_v)_{v\in E^0}$ with
$x_v \in \N_0^{V_{N}}$ such that $\phi([v]) = [x_v]$ for all $v\in E^0$.
Thus, for each $v \in E^0$
and $i \in \{-N, \ldots, N\}$, there exist vectors $s^{(-N)}_v, \ldots,
s^{(N)}_v \in \N_0^{\sink(F)}, \widetilde{r}_v \in \N_0^{\reg(F)}$ such that
\[
\phi([v]) = \sum_{i=-N}^N [s^{(i)}_v \otimes \sigma^i] + [\widetilde{r}_v \otimes \sigma^N].
\]
From here we see that
\begin{equation*}
[1_F] = \sum_{v \in E^0} \phi([v])  =
\sum_{i=-N}^N [\sum_{v \in E^0} s^{(i)}_v \otimes \sigma^i]
+ [\sum_{v \in E^0} \widetilde{r}_v \otimes \sigma^N].
\end{equation*}
By Lemma \ref{lem:no-neg}, this implies that $\sum_{v \in E^0} s^{(i)}_v = 0$ for
each $v \in E^0$ and negative $i$. Since the coefficients of each vector
$s^{(i)}_v$ are
non-negative, we obtain that $s^{(i)}_v = 0$
for all $v \in E^0, i \in \{-N, \ldots, -1\}$.
As a consequence, the equations above simplify to $\phi([v]) =
\sum_{i=0}^N [s^{(i)}_v \otimes \sigma^i] + [\widetilde{r}_v \otimes \sigma^N]$ and
\begin{equation}\label{eq:colim-unitality}
[\mathfrak{u}_N] = [1_F] = \sum_{v \in E^0} \phi([v])  =
\sum_{i=0}^N [\sum_{v \in E^0} s^{(i)}_v \otimes \sigma^i]
+ [\sum_{v \in E^0} \widetilde{r}_v \otimes \sigma^N].
\end{equation}

Now, for each $v \in \reg(E)$ the equation  $\sigma^{-1}\phi([v]) =
\phi(\sigma^{-1}[v])$ and Lemma \ref{lem:iterating} (i)
yield
\begin{equation}\label{eq:colim-linearity}
    \sum_{i=0}^N [s^{(i)}_v \otimes \sigma^{i-1}]
    + [A_F^t \widetilde{r}_v \otimes \sigma^N] =
    \sum_{i=0}^N [\sum_{x \in E^0} (A_E)_{v,x} s^{(i)}_x \otimes \sigma^i] +
    [\sum_{x \in E^0} (A_E)_{v,x} \widetilde{r}_x \otimes \sigma^N].
\end{equation}

By Remark \ref{rmk:filtercolim} there exists $M \geq 0$ such that the representatives
involved in Equations \eqref{eq:colim-unitality} and \eqref{eq:colim-linearity}
become equal upon applying $\tmap_{N,N+M}$.
Let $L := N+M$ and put $r_v := B_F^M \widetilde{r}_v \in \N_0^{\reg(F)}$,
$s^{N+1+j}_v := C_F B_F^{j} \widetilde{r}_v \in \N_0^{\sink(F)}$
for each $v \in E^0$ and $0 \leq j \leq M-1$.
Set
$S^{(i)}_{v,u} := (s^{(i)}_v)_u$ and $R_{v,w} = (r_{v})_w$
for each $v \in E^0$, $w \in \reg(F)$, $u \in \sink(F)$ and $i \in \{0,\ldots, L\}$.
With this notation in place, by Lemma \ref{lem:iterating} we have
the following equality in $\Z^{V_L}$:
\begin{equation}\label{eq:unitality}
\mathfrak{u}_{L} =
\sum_{i=0}^{L} \sum_{v \in E^0} s^{(i)}_v \otimes \sigma^i
+ \sum_{v \in E^0} r_v \otimes \sigma^L.
\end{equation}
The identities of \eqref{eq:bf-mor-P} follow by comparing \eqref{eq:unitality} with \eqref{eq:colim-ordunit} applied to $F$.
By the same argument, for each $v \in \reg(E)$ we obtain
\begin{equation}\label{eq:linearity}
    \sum_{i=0}^L s^{(i)}_v \otimes \sigma^{i-1}
    + C_F r_v \otimes \sigma^L + B_F r_v \otimes \sigma^L
    = \sum_{i=0}^L \sum_{x \in E^0} (A_E)_{v,x} s^{(i)}_x \otimes \sigma^i +
    \sum_{x \in E^0} (A_E)_{v,x}r_x \otimes \sigma^L.
\end{equation}
The identities of \eqref{eq:bf-mor-S} and \eqref{eq:bf-mor-AER=RAF} follow by comparing the coefficients of $\sigma^i$ for $i=0,\dots,L$ in both sides of \eqref{eq:linearity} and taking coordinates.
\end{proof}

\begin{coro} \label{coro:reg-reg}Let $E$ and $F$ be finite graphs and
$\phi \colon \gBF(E) \to \gBF(F)$
morphism of preordered $\Z[\sigma]$-modules such that $\phi(1_E) = 1_F$.
If $E$ is regular, then so is $F$.
\end{coro}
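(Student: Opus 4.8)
The plan is to feed the hypothesis into Proposition \ref{prop:bf-map-mat} and let the recursion in \eqref{eq:bf-mor-S} collapse the relevant data to zero once $E$ has no sinks.

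First I would apply Proposition \ref{prop:bf-map-mat} (taking, say, $L_0 = 0$) to the map $\phi$, obtaining an integer $L$ together with matrices $S^{(0)}, \ldots, S^{(L)} \in \N_0^{E^0 \times \sink(F)}$ and $R \in \N_0^{E^0 \times \reg(F)}$ satisfying \eqref{eq:bf-mor-P}, \eqref{eq:bf-mor-S} and \eqref{eq:bf-mor-AER=RAF}. The point of the regularity hypothesis is that $\reg(E) = E^0$, so the relations of \eqref{eq:bf-mor-S} — which a priori only hold for rows indexed by $v \in \reg(E)$ — now hold for every $v \in E^0$.

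The crux is then a short induction. From $S^{(0)}_{v,u} = 0$ holding for all $v \in E^0$ and $u \in \sink(F)$ we get $S^{(0)} = 0$; feeding this into the recursion $S^{(i)}_{v,u} = (A_E S^{(i-1)})_{v,u}$, now valid at every vertex $v$, gives $S^{(i)} = 0$ for all $i \in \{0, \ldots, L\}$. Specializing \eqref{eq:bf-mor-P} to $i = 0$ then yields $\#F^0_u = \sum_{z \in E^0} S^{(0)}_{z,u} = 0$ for each $u \in \sink(F)$.

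To finish I would observe that this is absurd unless there are no sinks to begin with: a path of length zero ending at a vertex $u$ is just $u$ itself, so $\#F^0_u = 1$ for every $u \in F^0$. Hence $\sink(F)$ must be empty, i.e.\ $F$ is regular. I do not expect any serious obstacle here; the only thing to get right is the observation that regularity of $E$ promotes the vanishing $S^{(0)} = 0$ (stated only for regular rows) to an identity over all of $E^0$, after which the recursion and the trivial count $\#F^0_u = 1$ do all the work. A rank-based argument via \eqref{eq:rk-bfg} is tempting, since $E$ regular forces $\rk \gBF(E) = 0$, but $\phi$ points the wrong way for this to be immediate, so the combinatorial route above is cleaner.
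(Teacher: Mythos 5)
Your proposal is correct and is essentially the paper's own argument: both apply Proposition \ref{prop:bf-map-mat}, use regularity of $E$ to conclude $S^{(0)}=0$, and derive the contradiction $1=\#F^0_u=\sum_{z}S^{(0)}_{z,u}=0$ for any putative sink $u$ of $F$. Your extra inductive step showing $S^{(i)}=0$ for all $i$ is harmless but unnecessary, since the $i=0$ case alone yields the contradiction.
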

\begin{proof} By Proposition \ref{prop:bf-map-mat}, there exists a matrix
$S^{(0)} \in \N_0^{E^0 \times \sink(F)}$ such that for each $v \in
\reg(E)$ and $u \in \sink(F)$ we have
$S^{(0)}_{v,u} = 0$ and $1 = \#E^0_{u} = \sum_{z \in E^0} S^{(0)}_{z,u}$.
Since $E = \reg(E)$, the existence of an element $u \in \sink(F)$
would imply that
$1 = \sum_{z \in E^0} S^{(0)}_{z,u} = \sum_{z \in \reg(E)} S^{(0)}_{z,u} = 0$,
a contradiction.
\end{proof}

We conclude the section by showing that isomorphisms 
between Bowen-Franks modules restrict to a bijection between 
sinks.

A vertex $v$ of a finite graph $E$ is a \emph{line-point}
if either $v$ is a sink or there exist
edges $e_1, \ldots, e_n$ such that $s(e_1) = v$, $\#s^{-1}(s(e_i)) = 1$
for all $i \in \{1, \ldots, n\}$ and $r(e_n) \in \sink(E)$. We remark
that this definition is equivalent to the one given in \cite{talented}.

\begin{rmk}\label{rmk:linept} If $E$ is a finite graph and $v \in E^0$ is a line point,
there exists $i \in \N_0$ and $u \in \sink(E)$ such that $v = \sigma^i u$ in $\gBF(E)$.
\end{rmk}

In \cite{talented}, line-points are characterized in terms of the
positive cone of the Bowen-Franks module of a graph. Recall
that an element $x \in \gBF(E)_+$ is \emph{aperiodic} if
the set $\{\sigma^i x\}_{i \in \Z}$ is infinite and \emph{minimal}
if $y \leq x$ implies $y = x$ for all $y \neq 0$. We shall also recall
from \cite[Section 2.2]{talented}
that in $\gBF(E)_+$ an element $x$ is minimal if and only if it is an \emph{atom},
meaning that if $x = z+y$ for some $z,y \geq 0$ then either $y = 0$ or $ z = 0$.

\begin{prop}[{\cite[Lemma 5.6 ii)]{talented}}] \label{prop:linept-min-ap}
Let $E$ be a finite graph.
A vertex $v \in E^0$ is a line-point if and only if it is a minimal and aperiodic
element in $\gBF(E)_+$.
\qed
\end{prop}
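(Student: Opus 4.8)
The plan is to prove both implications inside the filtered-colimit presentation of Proposition~\ref{prop:gbf-colim}, under which $\gBF(E)_+$ becomes $\colim(\N_0^{V_0}\xto{\tmap_0}\N_0^{V_1}\to\cdots)$ and each $\sigma^i v$ is the class of the basis vector $v_i$. The organising principle is that $\sigma$ restricts to an automorphism of the ordered monoid $\gBF(E)_+$ (as $\sigma\gBF(E)_+\subset\gBF(E)_+$ and $\sigma^{-1}\gBF(E)_+\subset\gBF(E)_+$), so that both being an atom and having an infinite $\sigma$-orbit are preserved under multiplication by $\sigma^{\pm 1}$; hence minimality and aperiodicity are $\sigma$-invariant. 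Combined with Remark~\ref{rmk:linept}, which writes any line-point as $v=\sigma^n u$ for a sink $u$, this reduces the forward implication to the case of a sink, while the reverse implication I would handle in contrapositive form.

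For the implication line-point $\Rightarrow$ minimal and aperiodic, I first treat a sink $u$, whose class is the basis vector $[u_0]$ and which every transition map fixes. If $[u_0]=[a]+[b]$ with $a,b\in\N_0^{V_m}$, then by Remark~\ref{rmk:filtercolim} at a large enough stage $n$ we get $u_0=\tmap_{0,n}(u_0)=\tmap_{m,n}(a)+\tmap_{m,n}(b)$ in $\N_0^{V_n}$; since $u_0$ is a coordinate vector with a single entry equal to $1$ and the two summands have nonnegative integer coordinates, one of them vanishes, so $u$ is an atom, i.e. minimal. Aperiodicity is immediate, since the classes $\sigma^i u=[u_i]$ are pairwise distinct basis vectors. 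Multiplying by $\sigma^n$ and invoking the $\sigma$-invariance above transfers both properties to $v=\sigma^n u$, proving the claim for an arbitrary line-point.

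For the converse I argue by contrapositive: assuming $v$ is not a line-point, I show it fails to be an atom or fails to be aperiodic. Starting at $v$, follow the forced path that leaves each vertex of out-degree exactly $1$ through its unique edge; since $E$ is finite, this path either (a) first reaches a vertex $w$ of out-degree $\ge 2$, or (b) consists entirely of out-degree-$1$ vertices and hence runs into a cycle. It cannot terminate at a sink, as that would make $v$ a line-point. Along the out-degree-$1$ prefix the defining relation $\bullet=\sigma\sum_{e\in s^{-1}(\bullet)}r(e)$ collapses the class of $v$ to $v=\sigma^k w$ in case (a), and to $v=\sigma^k z$ for a vertex $z$ on the cycle in case (b), for a suitable $k\ge 0$. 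In case (a) the relation $w=\sigma\sum_{e\in s^{-1}(w)}r(e)$ exhibits $w$ as a sum of two nonzero elements of $\gBF(E)_+$ (the ranges split into two nonempty groups, as $\#s^{-1}(w)\ge 2$), so $w$ is not an atom; by $\sigma$-invariance neither is $v$, hence $v$ is not minimal. In case (b), traversing the cycle of length $p$ once yields $z=\sigma^p z$, whence $\sigma^p v=v$ and the $\sigma$-orbit of $v$ is finite, so $v$ is not aperiodic.

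The main obstacle is the converse direction, and within it the bookkeeping of the forced path: one must check that, as long as every vertex encountered has out-degree $1$, the relations of $\gBF(E)$ indeed collapse the class of $v$ to a single $\sigma$-shift of the current vertex, and one must exclude termination at a sink in case (b). The atom and aperiodicity arguments for sinks are, by contrast, routine once the filtered-colimit description is available, the only delicate point being that the relevant basis vectors are genuinely nonzero and pairwise distinct in the colimit, which again follows from Remark~\ref{rmk:filtercolim}.
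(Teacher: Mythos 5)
Your proposal is correct in substance, but note first what you are being compared against: the paper gives no proof of this proposition at all. It is quoted from \cite{talented} (Lemma 5.6 ii) there) and stated without argument, which is why it ends in a box. Your proof is therefore necessarily a different route: a self-contained argument inside the colimit presentation of Proposition \ref{prop:gbf-colim}. It is well adapted to the paper's own toolkit. The forward direction (a sink is an atom with infinite $\sigma$-orbit because its basis vector is fixed by all transition maps, then transport along $\sigma$-invariance and Remark \ref{rmk:linept}) and the contrapositive dichotomy (the forced path from a non-line-point either first meets a vertex of out-degree $\geq 2$, killing atomicity, or winds onto a cycle of out-degree-one vertices, giving $\sigma^p v = v$ and killing aperiodicity) are both sound, and they mirror the filtered-colimit bookkeeping the paper itself performs in Lemmas \ref{lem:colim-ordunit} and \ref{lem:no-neg}. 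What the citation buys the paper is brevity; what your argument buys is independence from \cite{talented} and an explicit combinatorial picture of why line-points are exactly the minimal aperiodic vertices.

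One step you should make explicit, since as written it is asserted rather than proved: in case (a) you split $w = \sigma\sum_{e \in s^{-1}(w)} r(e)$ into two nonempty groups and claim both sub-sums are nonzero elements of $\gBF(E)_+$. Nonemptiness of the groups is not enough; you need that every vertex class $[z]$ is nonzero in $\gBF(E)$, and that a sum of positive elements with a nonzero summand is nonzero. Both follow from the colimit picture: each transition map $\tmap_n$ sends nonzero vectors of $\N_0^{V_n}$ to nonzero vectors of $\N_0^{V_{n+1}}$ (sink coordinates are fixed, and a regular vertex $w$ satisfies $\sum_v (A_E)_{w,v} \geq 1$, so $w_n \mapsto \sum_v (A_E)_{w,v} v_{n+1} \neq 0$), hence by Remark \ref{rmk:filtercolim} no nonzero nonnegative class becomes zero in the colimit. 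You invoke exactly this kind of argument for sinks in the forward direction, but it is also needed, for arbitrary vertices, in the converse; with that two-line patch your proof is complete.
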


Before proving the proposition below, we remark that if $E$ is a finite graph,
then Lemma \ref{lem:iterating} implies that the
map $\sink(E) \to \gBF(E)$, $u \mapsto [u \otimes 1]$ is an injection.

\begin{prop} \label{prop:sink2sink}
Let $E$ and $F$ be finite graphs.
If $\phi \colon  \gBF(E) \to \gBF(F)$ is an isomorphism
of preordered $\Z[\sigma]$-modules such that $\phi(1_E) = 1_F$, then $\phi$ restricts
to a bijection $\phi \colon \sink(E) \iso \sink(F)$.
\end{prop}
\begin{proof} 
Given that $\phi$ is an isomorphism,
it follows from \eqref{eq:rk-bfg} 
that $\# \sink(E) = \rk \gBF(E) = \rk \gBF(F) = \# \sink(F)$. Since $\phi$
is injective, it will suffice to see that it restricts to a map $\sink(E) \to \sink(F)$.

Let $x \in \sink(E)$ and, using Proposition \ref{prop:bf-map-mat},
write \[
\phi(x) = \sum_{i = 0}^N
\sum_{u \in \sink(F)} S^{(i)}_{x,u} [u \otimes \sigma^i] + \sum_{w \in \reg(F)}R_{x,w} [w \otimes \sigma^N].
\]
Since $x$ is an aperiodic atom by Proposition \ref{prop:linept-min-ap}, the element $\phi(x)$
is an aperiodic atom in $\gBF(F)_+$. In particular, in the
equality above the right hand side must consist of exactly one summand and so $\phi(x) = 
[z \otimes \sigma^i]$ for some
$z \in E^0$ and $i \geq 0$. Using once again that $\phi(x) = [z \otimes \sigma^i]$ is an aperiodic atom we obtain the same conclusion for $[z \otimes 1]$; Proposition \ref{prop:linept-min-ap} then tells us that
$z$ must be a line point. Hence $[z \otimes 1] = [u \otimes \sigma^j]$
for some $u \in \sink(F)$ and $j \geq 0$, as per Remark \ref{rmk:linept}.

We have thus seen that there exists
$k \geq 0$ such that $\phi(x) = [u \otimes \sigma^k]$.
By applying the same argument to $\phi^{-1}$ and $u$, there exists
$x' \in \sink(E)$ and $k' \geq 0$ such that $\phi^{-1}(u) = [x' \otimes \sigma^{k'}]$.
From here it follows that
\[
    [x \otimes 1] = \phi^{-1}(\phi(x)) = [x' \otimes \sigma^{k+k'}],
\]
which implies $x' = x$ and $k+k' = 0$. Hence $k = k' = 0$ and
$\phi(x) = [u \otimes 1]$, as desired.
\end{proof}

\begin{rmk} \label{rmk:mat-iso-sinks}
The proof of Proposition \ref{prop:sink2sink}
says in particular 
that if $\phi \colon \gBF(E) \to \gBF(F)$ is 
a preordered $\Z[\sigma]$-module isomorphism 
mapping $1_E \mapsto 1_F$, then in the description 
of Proposition \ref{prop:bf-map-mat} we have 
$R_{u,w} = 0$, $S^i_{u,u'} = 0$ and $S^0_{u,u'} = \delta_{u,\phi(u)}$ for all 
$u \in \sink(E)$, $w \in \reg(F)$,
$u' \in \sink(F)$ and $i \in \{1, \ldots, L\}$.

\end{rmk}

\section{Lifting maps between Bowen-Franks modules to algebra maps}
\label{sec:lift}

This section will be devoted to the proof of a lifting result
concerning Bowen-Franks modules and their corresponding Leavitt path algebras. 

\begin{thm} \label{thm:gbf-lift} Let $E$ and $F$ be finite graphs. If
$\phi \colon \gBF(E)\to\gBF(F)$ is
a morphism of preordered $\Z[\sigma]$-modules such that $\phi(1_E) = 1_F$, then there exists a unital, $\Z$-graded, diagonal preserving $\ast$-homomorphism $\varphi \colon L(E) \to L(F)$ such that the following diagram commutes.
\begin{center}
\begin{tikzcd}
 K_0^{\gr}(L(E)) \arrow{r}{K_0^{\gr}(\varphi)} & K_0^{\gr}(L(F))\\
 \gBF(E) \arrow{u}{\can}\arrow{r}{\phi} & \arrow{u}[right]{\can} \gBF(F)
\end{tikzcd}
\end{center}
\end{thm}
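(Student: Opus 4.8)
The plan is to apply Proposition \ref{prop:bf-map-mat} to encode $\phi$ by nonnegative integer matrices, and then to build an explicit Cuntz--Krieger $E$-family inside $L(F)$ whose members are sums of elements of the form $\gamma\gamma^\ast$ and $\theta(\gamma)\gamma^\ast$ for suitable paths $\gamma,\theta(\gamma)$ of $F$. Invoking the universal property of $L(E)$ then produces $\varphi$, and the three properties (graded, $\ast$, diagonal preserving) together with commutativity of the diagram will be read off from this description. Throughout I use that both $\can$ and $\phi$ are $\Z[\sigma]$-linear, so everything may be checked on the generators $[v]$, $v \in E^0$.

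First I would fix $L_0 = 1$ and use Proposition \ref{prop:bf-map-mat} to obtain $L \ge 1$ together with matrices $S^{(0)},\ldots,S^{(L)}$ and $R$ satisfying \eqref{eq:bf-mor-P}--\eqref{eq:bf-mor-AER=RAF}. The counting identities \eqref{eq:bf-mor-P} assert precisely that the disjoint family $\cR_L(F)\sqcup\cS_L(F)$ admits a partition $\{X_v\}_{v\in E^0}$ in which $X_v$ contains $R_{v,w}$ paths of length $L$ ending at each $w\in\reg(F)$ and $S^{(i)}_{v,u}$ paths of length $i$ ending at each $u\in\sink(F)$. Setting $P_v := \sum_{\gamma\in X_v}\gamma\gamma^\ast$, Lemma \ref{lem:pathsort}(iv) shows the $P_v$ are orthogonal homogeneous projections, Lemma \ref{lem:1-paths} gives $\sum_{v\in E^0}P_v = 1$, and Lemma \ref{lem:paths-gbf} together with \eqref{eq:bf-mor-mat} gives $[P_v] = \can(\phi([v]))$ in $K_0^{\gr}(L(F))$.

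The crux is the construction of the edge images. For regular $v$, refine $X_v$ to level $L+1$ by replacing each length-$L$ path $\gamma$ ending at a regular vertex with its one-edge extensions $\{\gamma f : s(f) = r(\gamma)\}$; call the result $Y_v$, so that $P_v = \sum_{\delta\in Y_v}\delta\delta^\ast$ by relation (CK2). The main obstacle is that a naive ``prepend an edge'' map overshoots the length bound, since the paths in $X_v$ ending at regular vertices already have maximal length $L$; passing to $Y_v$ is exactly the device that absorbs this shift. A length-by-length and range-by-range count then shows, using $S^{(i)}_{v,u} = (A_E S^{(i-1)})_{v,u}$ and $(A_E S^{(L)})_{v,u} = (RA_F)_{v,u}$ for the sink-ending paths and $(A_E R)_{v,w} = (RA_F)_{v,w}$ for the regular-ending ones, that $\bigsqcup_{e\in s^{-1}(v)}X_{r(e)}$ and $Y_v$ contain the same number of paths of each length ending at each vertex, after a global shift of the length by one. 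Hence there is a bijection $\bigsqcup_{e\in s^{-1}(v)}X_{r(e)} \to Y_v$ preserving range and raising length by one; its $e$-th component is an injection $\theta_e\colon X_{r(e)}\to Y_v$ with $r(\theta_e(\gamma)) = r(\gamma)$, $|\theta_e(\gamma)| = |\gamma|+1$, and $\bigsqcup_e \theta_e(X_{r(e)}) = Y_v$. I then set $T_e := \sum_{\gamma\in X_{r(e)}}\theta_e(\gamma)\gamma^\ast$, a homogeneous element of degree one.

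Finally, the orthogonality relations of Lemma \ref{lem:pathsort} for paths in $\cR_L(F)\sqcup\cS_L(F)$ and in $\cR_{L+1}(F)\sqcup\cS_{L+1}(F)$ reduce the Cuntz--Krieger relations to bookkeeping: one computes $T_e^\ast T_f = \delta_{e,f}P_{r(e)}$ (using that the $\theta_e$ have disjoint images partitioning $Y_v$), $P_{s(e)}T_e = T_e = T_e P_{r(e)}$, and $\sum_{e\in s^{-1}(v)}T_e T_e^\ast = \sum_{\delta\in Y_v}\delta\delta^\ast = P_v$. By the universal property this defines a unital $\ast$-homomorphism $\varphi\colon L(E)\to L(F)$ with $\varphi(v) = P_v$ and $\varphi(e) = T_e$, which is $\Z$-graded since $P_v, T_e, T_e^\ast$ are homogeneous of degrees $0,1,-1$. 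An induction on path length shows $\varphi(\alpha) = \sum_{\gamma\in X_{r(\alpha)}}\Phi_\alpha(\gamma)\gamma^\ast$ with $\Phi_\alpha$ injective and range preserving, whence $\varphi(\alpha\alpha^\ast) = \sum_{\gamma}\Phi_\alpha(\gamma)\Phi_\alpha(\gamma)^\ast \in D(F)$, giving diagonal preservation. Commutativity of the diagram then follows on the generators $[v]$, since $K_0^{\gr}(\varphi)(\can([v])) = [L(F)P_v] = [P_v] = \can(\phi([v]))$.
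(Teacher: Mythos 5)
Your proposal is correct and takes essentially the same route as the paper's proof: Proposition \ref{prop:bf-map-mat} supplies the matrices, the counting identities \eqref{eq:bf-mor-P}--\eqref{eq:bf-mor-AER=RAF} are realized as partitions of $\cR_L(F)\sqcup\cS_L(F)$ together with range-preserving, length-raising bijections (your single bijection $\bigsqcup_{e\in s^{-1}(v)}X_{r(e)}\to Y_v$ is just a bundled form of the paper's three families $\zeta^i$, $\zeta^L$, $\xi$, with $Y_v$ matching their codomains), and the resulting Cuntz--Krieger family defines the lift, with commutativity checked on generators via Lemma \ref{lem:paths-gbf}. The only minor divergence is that you establish diagonal preservation directly from the inductive formula for $\varphi(\alpha)$, whereas the paper reduces to $\Z$-coefficients and quotes the fact that every $\ast$-homomorphism between Leavitt path $\Z$-algebras is automatically diagonal preserving.
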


\begin{proof} Since $L_\ell(E) = \ell \otimes_\Z L(E)$,
$L_\ell(F) = \ell \otimes_\Z L(F)$, and all $\ast$-homomorphisms
between Leavitt path algebras over $\Z$ are diagonal preserving (\cite[Corollary 5]{diago}), it 
suffices to show that there exists a unital $\Z$-graded
$\ast$-homomorphism $\varphi \colon L_\Z(E) \to L_\Z(F)$
satisfying $K_0^{\gr}(\varphi)\can = \can\phi$.

By Proposition \ref{prop:bf-map-mat}, there exist $L \in \N_0$
and matrices $S^{(0)}, \ldots, S^{(L)} \in \N_0^{E^0 \times \sink(F)}$,
$R \in \N_0^{E^0 \times \reg(F)}$ statisfying Equations
\eqref{eq:bf-mor-mat}, \eqref{eq:bf-mor-P}, \eqref{eq:bf-mor-S}, and
\eqref{eq:bf-mor-AER=RAF}.
This implies, for each $w \in \reg(F)$ and $u \in \sink(F)$, $i \in \{0, \ldots, L\}$,
the existence of partitions
\begin{equation}\label{def:Gamma-Sigma}
F^L_{w}  = \bigsqcup_{z \in E^0} \Gamma_{z,w}, \quad
F^i_{u}  = \bigsqcup_{z \in E^0} \Sigma^i_{z,u}
\end{equation}
such that $\#\Gamma_{z,w} = R_{z,w}$ and $\#\Sigma^i_{z,u} = S^{(i)}_{z,u}$.
Moreover, if $v \in \reg(E)$ then $\Sigma^0_{v,u} = \emptyset$
and there exist bijections
\begin{align}
\zeta^i_{v,u} &\colon \{(e,\beta) : e \in s^{-1}(v), \beta \in \Sigma_{r(e), u}^i\}
\iso \Sigma_{v, u}^{i+1},\qquad (0 \leq i \leq L-1),\label{bij:zetai}\\
\zeta^L_{v,u} &\colon \{(e,\beta) : e \in s^{-1}(v), \beta \in \Sigma_{r(e), u}^L\}
\iso \{\alpha f : f \in F^1, r(f) = u, \alpha \in \Gamma_{v,s(f)}\},\label{bij:zetaN}\\
\xi_{v,w} &\colon \{(e,\alpha) : e \in s^{-1}(v), \alpha \in \Gamma_{r(e), w}\}
\iso \{\alpha f : f \in F^1, r(f) = w, \alpha \in \Gamma_{v,s(f)}\}.\label{bij:xi}
\end{align}
We shall identify the images of the morphisms above with paths in $L(F)$.
We will omit the indices in the functions above since
they can be deduced from the element at which the function is being evaluated, namely $\xi(e,\alpha) = \xi_{s(e), r(\alpha)}(e,\alpha)$ and
$\zeta^i(e,\beta) = \zeta^i_{s(e), r(\beta)}(e,\beta)$.

For each $v \in E^0$ and $e \in E^1$, we define
\begin{align}
\varphi(v) &= \sum_{w\in \reg(F)}\sum_{\alpha \in \Gamma_{v,w}} \alpha \alpha^\ast
+ \sum_{u \in \sink(F)} \sum_{i=0}^L \sum_{\beta \in \Sigma^i_{v,u}}\beta\beta^\ast,
\label{def:phi-v}\\
\varphi(e) &= \sum_{w \in \reg(F)} \sum_{\alpha \in \Gamma_{r(e),w}}
\xi(e,\alpha)\alpha^\ast +
\sum_{u \in \sink(F)} \sum_{i=0}^L \sum_{\beta \in \Sigma^i_{r(e),u}}
\zeta^i(e,\beta)\beta^\ast.\label{def:phi-e}
\end{align}
We will show that
the prescriptions above
define a graded $\ast$-homomorphism.
Note that for each $v \in E^0$ and $e \in E^1$ we have
$\varphi(e) \in L(F)_1, \varphi(v) \in L(F)_0$ and, by
Lemma \ref{lem:1-paths},
\begin{align*}
\sum_{v \in E^0} \varphi(v) &= \sum_{w\in \reg(F)}\sum_{v \in E^0}\sum_{\alpha \in \Gamma_{v,w}} \alpha \alpha^\ast
+ \sum_{u \in \sink(F)} \sum_{i=0}^L \sum_{v \in E^0}\sum_{\beta \in \Sigma^i_{v,u}}\beta\beta^\ast
\\&= \sum_{w\in \reg(F)}\sum_{\alpha \in F^L_{w}} \alpha \alpha^\ast
+ \sum_{u \in \sink(F)} \sum_{i=0}^L \sum_{\beta \in F^i_{u}}\beta\beta^\ast = 1.
\end{align*}
Thus, to show that the assignments \eqref{def:phi-v} and \eqref{def:phi-e}
define a unital, graded $\ast$-homomorphism,
it suffices to verify the following relations:
\begin{align}
&\varphi(v) = \varphi(v)^\ast, &(v \in E^0)\tag{P}\label{phi:P}\\
&\varphi(v)\varphi(v') = \delta_{v,v'}\varphi(v),  &(v,v' \in E^0) \tag{V}\label{phi:V}\\
&\varphi(s(e))\varphi(e) = \varphi(e)\varphi(r(e)) = \varphi(e), &(e \in E^1)\tag{E}\label{phi:E}\\
&\varphi(g)^\ast \varphi(e) = \delta_{g,e}\varphi(r(e)),   &(g,e \in E^1)\tag{CK1}\label{phi:ck1}\\
&\varphi(v) = \sum_{e \in s^{-1}(v)} \varphi(e)\varphi(e)^\ast.  &(v \in \reg(E)) \tag{CK2}\label{phi:ck2}
\end{align}
Relations \eqref{phi:P} and \eqref{phi:V} follow directly from Lemma \ref{lem:pathsort}.
We turn our attention to \eqref{phi:E}. First, we will compute $\varphi(e)\varphi(r(e))$.
By Lemma \ref{lem:pathsort},
\begin{align*}
\varphi(e)\varphi(r(e)) &=
\sum_{w \in \reg(F)} \sum_{\alpha, \lambda \in \Gamma_{r(e),w}}
\xi(e,\alpha)\alpha^\ast\lambda\lambda^\ast +
\sum_{u \in \sink(F)} \sum_{i=0}^N
\sum_{\beta, \gamma \in \Sigma^i_{r(e),u}}
\zeta^i(e,\beta)\beta^\ast \gamma\gamma^\ast\\
&= \sum_{w \in \reg(F)} \sum_{\alpha \in \Gamma_{r(e),w}}
\xi(e,\alpha)\alpha^\ast +
\sum_{u \in \sink(F)} \sum_{i=0}^N
\sum_{\beta \in \Sigma^i_{r(e),u}}
\zeta^i(e,\beta)\beta^\ast = \varphi(e).
\end{align*}
Using Lemma \ref{lem:pathsort} once again, we see that $\varphi(s(e))\varphi(e)$
coincides with the following:
\begin{align}\label{eq:se-times-e}
    \sum_{w, w' \in \reg(F)} \sum_{\alpha \in \Gamma_{r(e),w}, \lambda \in \Gamma_{s(e),w'}}
    \lambda\lambda^\ast\xi(e,\alpha)\alpha^\ast +
    \sum_{u \in \sink(F)} \sum_{i = 0}^{L-1}
    \sum_{\beta \in \Sigma^i_{r(e),u}, \gamma \in \Sigma^{i+1}_{s(e),u}}
    \gamma\gamma^\ast\zeta^i(e,\beta)\beta^\ast\\\notag
    + \sum_{u \in \sink(F), w' \in \reg(F)} \sum_{\beta \in \Sigma^L_{r(e),u}
    \lambda \in \Gamma_{s(e),w'}} \lambda \lambda^\ast \zeta^L(e,\beta)\beta^\ast.
\end{align}
For a fixed $w \in \reg(E)$ and $\alpha \in \Gamma_{r(e), w}$, we know that
$\xi(e, \alpha) = \varepsilon f$ for some $f \in F^1$ and
$\varepsilon \in \Gamma_{s(e), s(f)}$. Therefore
if $\lambda \in \Gamma_{s(e),w'}$ we must have
\[
\lambda \lambda^\ast \xi(e, \alpha)\alpha^\ast = \delta_{\lambda, \varepsilon} \lambda f \alpha^\ast
= \delta_{\lambda, \varepsilon} \varepsilon f \alpha^\ast = \delta_{\lambda, \varepsilon} \xi(e,\alpha)\alpha^\ast.
\]
In the same fashion, fix $u \in \sink(E)$, and
$\beta \in \Sigma^i_{r(e),u}$. If $i < L$ we have that
$\gamma\gamma^\ast\zeta^i(e,\beta)\beta^\ast =
\delta_{\gamma, \zeta^i(e,\beta)} \zeta^i(e,\beta)\beta^\ast$ for each $\gamma \in \Sigma^{i+1}_{s(e),u}$.
If $i = L$, then there is a unique $\lambda \in \Gamma_{s(e),w'}$
such that $\lambda\lambda^\ast \zeta^L(e,\beta)\beta^\ast$ is nonzero, in which case it
equals $\zeta^L(e,\beta)\beta^\ast$.
Hence Equation \eqref{eq:se-times-e} agrees with
\[
\sum_{w \in \reg(F)} \sum_{\alpha \in \Gamma_{r(e),w}}
\xi(e,\alpha)\alpha^\ast +
\sum_{u \in \sink(F)} \sum_{i = 0}^L
\sum_{\beta \in \Sigma^i_{r(e),u}}
\zeta^i(e,\beta)\beta^\ast = \varphi(e)
\]
as desired.

Next we prove \eqref{phi:ck1}. In view of \eqref{phi:P}, \eqref{phi:V} and \eqref{phi:E},
we can assume without loss of generality
that $s(g) = s(e)$. Using that for each $w \in \reg(F), u \in \sink(F)$
the functions $\xi_{s(e),w}$ and $\zeta^i_{s(e),u}$ are
bijections, we obtain the following equalities:
\begin{align*}
\xi(g,\lambda)^\ast \xi(e,\alpha) &= \delta_{\xi(g,\lambda),\xi(e,\alpha)}
r(\xi(e,\alpha)) = \delta_{e,g} \delta_{\alpha, \lambda} w, & (\lambda 
 \in \Gamma_{r(g), w'}, \alpha \in \Gamma_{r(e), w})\\
\zeta^i(g,\gamma)^\ast\zeta^i(e,\beta) &=
\delta_{\zeta^i(g,\gamma),\zeta^i(e,\beta)}r(\zeta^i(e,\beta)) =
\delta_{e,g} \delta_{\beta, \gamma} u. &(\gamma \in \Sigma^i_{r(g), u'}, \beta \in \Sigma^i_{r(e), u})
\end{align*}
Consequently,
\[
\varphi(g)^\ast \varphi(e) = \sum_{w \in \reg(F)} \sum_{\alpha \in \Gamma_{r(e),w}}
\delta_{e,g}\alpha\alpha^\ast + \sum_{u\in \sink(F)} \sum_{i=0}^L
\sum_{\beta \in \Sigma^i_{r(e),u}} \delta_{e,g}\beta\beta^\ast = \delta_{e,g}\varphi(r(e)).
\]

At last, we prove \eqref{phi:ck2}. Fix $v \in \reg(E)$. By hypothesis,
we know that $\Sigma_{v,u}^0 = \emptyset$ for all $u \in \sink(F)$ and thus
\begin{align}
\varphi(v) &= \sum_{w\in \reg(F)}\sum_{\alpha \in \Gamma_{v,w}} \alpha \alpha^\ast
+ \sum_{u \in \sink(F)} \sum_{i=1}^L \sum_{\beta \in \Sigma^i_{v,u}}\beta\beta^\ast.\notag
\\&=\sum_{w\in \reg(F)}\sum_{f \in s^{-1}(w)}\sum_{\alpha \in \Gamma_{v,w}} \alpha f (\alpha f)^\ast
+ \sum_{u \in \sink(F)} \sum_{i=1}^L \sum_{\beta \in \Sigma^i_{v,u}}\beta\beta^\ast\notag\\
&= \sum_{w\in \reg(F)}\sum_{f \in s^{-1}(w)}\sum_{\alpha \in \Gamma_{v,s(f)}} \alpha f (\alpha f)^\ast
+ \sum_{u \in \sink(F)} \sum_{i=1}^L \sum_{\beta \in \Sigma^i_{v,u}}\beta\beta^\ast\notag\\
&= \sum_{f \in F^1}\sum_{\alpha \in \Gamma_{v,s(f)}} \alpha f (\alpha f)^\ast
+ \sum_{u \in \sink(F)} \sum_{i=1}^L \sum_{\beta \in \Sigma^i_{v,u}}\beta\beta^\ast.\label{phi:ck2fromv}
\end{align}
Given $e \in E^1$, a similar reasoning
as the one used to prove \eqref{phi:E} goes to show that
\begin{align*}
\varphi(e)\varphi(e)^\ast &=
\sum_{w\in \reg(F)} \sum_{\alpha \in \Gamma_{r(e),w}}
\xi(e,\alpha)\xi(e,\alpha)^\ast +
\sum_{u \in \sink(F)} \sum_{i=0}^L
\sum_{\beta \in \Sigma^i_{r(e),u}}
\zeta^i(e,\beta)\zeta^i(e,\beta)^\ast.
\end{align*}
Summing the expression above for each $e \in s^{-1}(v)$ we obtain the following:
\begin{equation}\label{phi:ck2frome}
\sum_{w\in \reg(F)} \sum_{e \in s^{-1}(v)}\sum_{\alpha \in \Gamma_{r(e),w}}
\xi(e,\alpha)\xi(e,\alpha)^\ast +
\sum_{u \in \sink(F)} \sum_{i=0}^L
\sum_{e \in s^{-1}(v)}\sum_{\beta \in \Sigma^i_{r(e),u}}
\zeta^i(e,\beta)\zeta^i(e,\beta)^\ast.
\end{equation}

Notice that given $w \in \reg(F)$ and $u \in \sink(F)$, we are summing 
over the domains of definition of the bijections $\zeta^i_{v,u}$ and $\xi_{v,u}$ respectively. From this and \eqref{phi:ck2frome} we see that
\begin{align*}
\sum_{e \in s^{-1}(v)}\varphi(e)\varphi(e)^\ast &=
\sum_{z\in F^0} \sum_{f \in r^{-1}(z)}\sum_{\alpha \in \Gamma_{v,s(f)}}
\alpha f (\alpha f)^\ast +
\sum_{u \in \sink(F)} \sum_{i=1}^L
\sum_{\beta \in \Sigma^i_{v,u}} \beta\beta^\ast\\
&= \sum_{f\in E^1}\sum_{\alpha \in \Gamma_{v,s(f)}}
\alpha f (\alpha f)^\ast +
\sum_{u \in \sink(F)} \sum_{i=1}^L
\sum_{\beta \in \Sigma^i_{v,u}} \beta\beta^\ast.
\end{align*}
The expression above is precisely \eqref{phi:ck2fromv}, which equals
$\varphi(v)$; this completes the proof of \eqref{phi:ck2}.

Finally, let us see that $K_0^{\gr}(\varphi)\can = \can \phi$. Fix $v \in E^0$.
Applying Lemma \ref{lem:pathsort} we obtain
\[
    K_0^{\gr}(\varphi)\can(v) = [\varphi(v)]
    = \sum_{w\in \reg(F)}\sum_{\alpha \in \Gamma_{v,w}} [\alpha \alpha^\ast]
    + \sum_{u \in \sink(F)} \sum_{i=0}^L \sum_{\beta \in \Sigma^i_{v,u}}[\beta\beta^\ast].
\]
If $\alpha \in \Gamma_{v,w}$ and $\beta \in \Sigma^i_{v,u}$,
by Lemma \ref{lem:paths-gbf} we have that $[\alpha\alpha^\ast] = \can([r(\alpha)_{|\alpha|}])
= \can([w_L])$ and likewise $[\beta\beta^\ast] = \can([u_i])$. Therefore
\begin{align*}
   K_0^{\gr}(\varphi)\can(v)
    &= \sum_{w\in \reg(F)}\sum_{\alpha \in \Gamma_{v,w}} \can([w_L])
    + \sum_{u \in \sink(F)} \sum_{i=0}^L \sum_{\beta \in \Sigma^i_{v,u}}\can([u_i])
    \\&= \sum_{w\in \reg(F)}\#\Gamma_{v,w} \can([w_L])
    + \sum_{u \in \sink(F)} \sum_{i=0}^L \#\Sigma^i_{v,u}\can([u_i])
    \\&= \sum_{w\in \reg(F)}R_{v,w} \can([w_L])
    + \sum_{u \in \sink(F)} \sum_{i=0}^L S^{(i)}_{v,u}\can([u_i]).
\end{align*}
The last term in the chain of equalities above agrees with 
$(\can \circ \phi)(v)$ by Equation \eqref{eq:bf-mor-mat}.
\end{proof}

\begin{ex} We go through the construction of the proof of Theorem  \ref{thm:gbf-lift}
in a concrete example. Consider the following graphs: 
\[
E =\begin{tikzcd}
\bullet_{z} \arrow["x_1"', loop, distance=2em, in=125, out=55] \arrow["x_2"', loop, distance=2em, in=305, out=235]
\end{tikzcd},
\qquad A_E = \begin{pmatrix}2\end{pmatrix},
\qquad
F =
\begin{tikzcd}
\bullet_{u} \arrow["e_1"', loop, distance=2em, in=125, out=55] 
\arrow[r, "e_2", bend left] & \bullet_{v} 
\arrow["f_1"', loop, distance=2em, in=125, out=55] \arrow[l, "f_2", bend left]
\end{tikzcd},
\qquad A_F =\begin{small}\begin{pmatrix}1&1\\1&1\end{pmatrix}\end{small}.
\]
There is a preordered $\Z[\sigma]$-module isomorphism 
$\phi \colon \gBF(E) \iso \gBF(F)$
determined 
by $1_E =[z_0] \mapsto 1_F = [u_0] + [v_0]$.
Setting $L = 0$, the matrix $R = \begin{pmatrix}1 & 1\end{pmatrix}
\in \Z^{E^0 \times F^0}$ satisfies the equations 
of Proposition \ref{prop:bf-map-mat}. Thus, there exist
partitions of paths of length $L = 0$ ending at each regular vertex, 
namely $\Gamma_{z, u} = \{u\}$, $\Gamma_{z, v} = \{v\}$, 
and bijections 
\begin{align*}
    &\xi_{z,u} \colon \{x_1, x_2\} \times \{u\} 
    \iso \{u\} \times \{e_1\} \sqcup \{v\} \times \{f_2\},\\
    &\xi_{z,v} \colon \{x_1, x_2\} \times \{v\} 
    \iso \{u\} \times \{e_2\} \sqcup \{v\} \times \{f_1\}.
\end{align*}
For example, we may set
\[
\xi_{z,u}(x_1,u) = e_1, \qquad \xi_{z,u}(x_2,u) = f_2, \qquad
\xi_{z,v}(x_1,v) = e_2, \qquad \xi_{z,v}(x_2,v) = f_1.
\]
From this choice of bijections we obtain a 
lift $\varphi \colon L_\ell(E) \to L_\ell(F)$ of $\phi$ determined 
by the following assignments:
\[
\varphi(1) = 1, \qquad \varphi(x_1) = e_1+e_2, \qquad 
\varphi(x_2) = f_1+f_2.
\]
\end{ex}

\begin{rmk} If $\phi \colon \gBF(E) \to \gBF(F)$ is an
ordered $\Z[\sigma]$-module
isomorphism mapping $1_E \mapsto 1_F$ and $\varphi \colon L(E)
\to L(F)$ is constructed as in the proof of Theorem \ref{thm:gbf-lift}, then
by Remark \ref{rmk:mat-iso-sinks} the map
$\varphi$ restricts 
to a bijection $\sink(E) \iso \sink(F)$.
\end{rmk}

\begin{rmk} Corollary \ref{coro:reg-reg} may also be derived
from Theorem \ref{thm:gbf-lift}.
Indeed, let $\phi \colon \gBF(E) \to \gBF(F)$ be a preordered $\Z[\sigma]$-module
map such that $\phi(1_E) = 1_F$.
If $E$ is regular then $L(E)$ is strongly graded, by
\cite[Theorem 3.15]{graded-str}.
Now, Theorem \ref{thm:gbf-lift} implies the
existence of a $\Z$-graded $\ast$-homomorphism $\varphi \colon L(E) \to L(F)$
and so $L(F)$ must be strongly graded as well; see \cite[Proposition 1.1.15 (4)]{hazrat-book}. Using
\cite[Theorem 3.15]{graded-str}
once again, we conclude that $F$ is regular.
\end{rmk}

We shall presently explore the question of whether there exist 
graded unital maps $\varphi \colon L_\C(E) \to L_\C(F)$
between Leavitt path algebras of finite graphs such that 
$K_0^{\gr}(\varphi)$ is an isomorphism but $\varphi$ is not.

To this end, we recall some facts regarding graph
$C^*$-algebras; we refer the reader to \cite[Section 5.2]{lpabook}
and \cite[Sections 1-4]{abramstomforde}
for further details. 
Given a graph $E$, 
its $C^*$-algebra $C^*(E)$ is given by a completion of
$L_\C(E)$ in a suitable norm (\cite[Proposition 3.1]{abramstomforde}). A $\ast$-homomorphism 
between Leavitt path $\C$-algebras can be extended upon
completion to one between the corresponding graph
$C^\ast$-algebras (\cite[Proposition 4.4]{abramstomforde}).

The algebra $C^*(E)$ is equipped with a so called \emph{gauge action} of the circle 
$S^1$ (\cite[Definition 2.13]{abramstomforde}). We can view the automorphism associated to multiplication by $z \in S^1$ as the completion of the $\ast$-automorphism given by
\[
    \mu_z \colon L_\C(E) \to L_\C(E), 
    \quad v \mapsto v, \ e \mapsto ze \qquad (v \in E^0, e \in E^1).
\]
Notice that for each homogeneous element $x \in L_\C(E)$ of degree $k$ we have $\mu_z(x) = z^k x$. Hence, a $\Z$-graded algebra $\ast$-homomorphism $L_\C(E) \to L_\C(F)$ is equivariant
with respect to the actions defined above. In particular, its completion yields an $S^1$-equivariant map 
$C^\ast(E) \to C^\ast(F)$.

Recall that 
a graph $E$ is \emph{irreducible} if for each $v,w \in E^0$
there exists a path from $v$ to $w$, and \emph{non-trivial}
if it does not consist of a single cycle. 

\begin{rmk} \label{rmk:diag-(L)}
If $E$ is an irreducible, non-trivial graph with at least one edge, 
then by \cite[Lemma 6.3.14]{lpabook} and 
\cite[Proposition 4.5 and Theorem 4.13]{core} 
it follows that $D(E)$ is a maximal commutative subalgebra of 
$L_\C(E)$. In particular, if $E$ and $F$ are irreducible, non-trivial graphs 
with at least one edge
and $\varphi \colon L_\C(E) \to L_\C(F)$ 
a diagonal preserving isomorphism, then $\varphi(D(E)) = D(F)$. In particular, 
for $\overline{D(E)}$ the closure of $D(E)$ in $C^\ast(E)$, the completion
of $\varphi$ is an $S^1$-equivariant
isomorphism which maps $\overline{D(E)}$
bijectively onto $\overline{D(F)}$.
\end{rmk}

Write $\mathcal O$ for the category of 
pointed preordered $\Z[\sigma]$-modules and $\cat{Leavitt}_\ell$
for the full subcategory of $\Z$-graded $\ell$-algebras generated 
by Leavitt path algebras. We may view the graded
Grothendieck group as a functor $K_0^{\gr}\colon \cat{Leavitt}_\ell\to\mathcal O$ sending $L(E)$
to $(K_0^{\gr}(L(E)), K_0^{\gr}(L(E))_+, [L(E)])$.

\begin{ques} \label{ques:norefle}
Do there exist finite, irreducible, non-trivial graphs $E$ 
and $F$ with at least one edge satisfying the following two conditions?
\begin{itemize}
\item[(i)] there exists an order preserving $\Z[\sigma]$-module isomorphism 
$\phi \colon\gBF(E) \xto{\cong} \gBF(F)$ 
mapping $1_{E} \mapsto 1_{F}$; 
\item[(ii)] there are no $S^1$-equivariant isomorphisms
$\varphi \colon C^\ast(E) \to C^\ast(F)$ 
such that $\varphi(\overline{D(E)}) = \overline{D(F)}$.
\end{itemize}    
\end{ques}

The motivation for Question \ref{ques:norefle} stems 
from Theorem \ref{thm:norefle} below. Conditions (i)
and (ii) are related to the notions of
\emph{shift equivalence} and \emph{strong shift equivalence}
of matrices (see e.g. \cite[Definition 1.1]{shift}).

By \cite[Theorem 7.3 and Remark 7.5]{shift}, we know that
finite, irreducible, non-trivial 
graphs whose adjacency matrices are shift equivalent 
but not strong shift equivalent satisfy all the conditions 
of Question \ref{ques:norefle} with the exception of 
the unitality requirement in condition (i). 
Such an example is that of the
graphs $E_{\mathrm{KR}}$ and $F_{\mathrm{KR}}$ with
adjacency matrices $A$ and $B$ as in the counterexample
of Kim and Roush to the irreducible case of the
Williams conjecture \cite[Section 7]{KR}. (See also 
\cite[proof of Theorem 7.4]{shift}; in loc. cit. the notation
$G_A$ and $G_B$ is employed for $E_{\mathrm{KR}}$ and $F_{\mathrm{KR}}$ respectively). This motivates the following question.

\begin{ques}\label{ques:kr}
Does there exist an ordered $\Z[\sigma]$-module 
isomorphism $\mathfrak{BF}_{\mathsf{gr}}(E_{\mathrm{KR}}) \xto{\cong} \mathfrak{BF}_{\mathsf{gr}}(F_{\mathrm{KR}})$ mapping $1_{E_{\mathrm{KR}}}$ to $1_{F_{\mathrm{KR}}}$?
\end{ques}

Note that an affirmative answer to Question \ref{ques:kr}
would, in particular, 
answer Question \ref{ques:norefle} in the affirmative.
Some partial results on Question \ref{ques:kr}
are collected in Remark \ref{rmk:kr} below.

\begin{thm} \label{thm:norefle}
If the answer to Question \ref{ques:norefle} 
is affirmative, then the functor 
$K_0^{\gr} \colon \cat{Leavitt}_\C \to \mathcal O$
does not reflect isomorphisms.
\end{thm}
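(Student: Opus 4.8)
The plan is to assume that finite, irreducible, non-trivial graphs $E$ and $F$ with at least one edge exist satisfying conditions (i) and (ii) of Question \ref{ques:norefle}, and to produce from them a graded unital $\ast$-homomorphism $\varphi \colon L_\C(E) \to L_\C(F)$ whose image under $K_0^{\gr}$ is an isomorphism in $\mathcal O$, while $\varphi$ itself is not an isomorphism of $\Z$-graded algebras. This exhibits a morphism in $\cat{Leavitt}_\C$ that $K_0^{\gr}$ sends to an isomorphism but which is not itself an isomorphism, so that $K_0^{\gr}$ does not reflect isomorphisms.

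First I would invoke condition (i): the given order-preserving $\Z[\sigma]$-module isomorphism $\phi \colon \gBF(E) \iso \gBF(F)$ sending $1_E \mapsto 1_F$ is in particular a morphism of pointed preordered modules, so Theorem \ref{thm:gbf-lift} applies and yields a unital, $\Z$-graded, diagonal preserving $\ast$-homomorphism $\varphi \colon L_\C(E) \to L_\C(F)$ fitting into the commuting square with the two vertical $\can$ maps. Since $K_0(\C) = \Z$, both instances of $\can$ are isomorphisms by \cite[Corollary 5.4]{arcor}; chasing the commutative diagram then gives $K_0^{\gr}(\varphi) = \can \circ \phi \circ \can^{-1}$, which is an isomorphism of pointed preordered $\Z[\sigma]$-modules because $\phi$ is. Thus $K_0^{\gr}(\varphi)$ is an isomorphism in $\mathcal O$.

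Next I would argue by contradiction that $\varphi$ is not an isomorphism. Suppose it were. Since $\varphi$ is diagonal preserving and $E,F$ are irreducible, non-trivial, with at least one edge, Remark \ref{rmk:diag-(L)} tells us that $D(E)$ and $D(F)$ are maximal commutative, $\varphi(D(E)) = D(F)$, and the completion of $\varphi$ yields an $S^1$-equivariant isomorphism $C^\ast(E) \to C^\ast(F)$ carrying $\overline{D(E)}$ bijectively onto $\overline{D(F)}$. (Here I use that a graded $\ast$-homomorphism is $S^1$-equivariant for the gauge actions, as recorded before Remark \ref{rmk:diag-(L)}, and that a $\ast$-homomorphism between Leavitt path $\C$-algebras extends to the graph $C^\ast$-algebras by \cite[Proposition 4.4]{abramstomforde}.) This directly contradicts condition (ii), which asserts that no such $S^1$-equivariant diagonal-preserving isomorphism exists. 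Hence $\varphi$ is not an isomorphism of graded algebras, and $K_0^{\gr}$ does not reflect isomorphisms.

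The main obstacle, and the step requiring the most care, is ensuring that the failure of $\varphi$ to be an isomorphism is genuinely detected at the level of graphs rather than merely reflecting a poor choice of lift. This is why condition (ii) is phrased not as ``$\varphi$ is not an isomorphism'' but as the intrinsic, lift-independent statement that \emph{no} $S^1$-equivariant diagonal-preserving $C^\ast$-isomorphism exists: by passing from the particular algebraic lift $\varphi$ to its completion and invoking the maximality of the diagonal from Remark \ref{rmk:diag-(L)}, any hypothetical graded isomorphism $\varphi$ would furnish exactly the forbidden $C^\ast$-level isomorphism. The delicate point is checking that every hypothesis of Remark \ref{rmk:diag-(L)} is in force — irreducibility, non-triviality, and the presence of at least one edge — which is precisely why these are built into the statement of Question \ref{ques:norefle}.
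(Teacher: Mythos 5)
Your proof is correct and follows essentially the same route as the paper: lift $\phi$ via Theorem \ref{thm:gbf-lift}, note that $K_0^{\gr}(\varphi)$ is an isomorphism because $\can$ identifies $K_0^{\gr}$ with $\gBF$ over $\C$, and rule out $\varphi$ being an isomorphism via Remark \ref{rmk:diag-(L)} and condition (ii). Your explicit spelling out of $K_0^{\gr}(\varphi) = \can \circ \phi \circ \can^{-1}$ is a detail the paper compresses into its opening sentence, but the argument is the same.
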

\begin{proof}
We shall use that, over $\C$, the graded Grothendieck group
of a Leavitt path algebra can be identified as a pointed, preordered $\Z[\sigma]$-module
with its Bowen-Franks module.
Suppose that there exist
two finite, irreducible, non-trivial graphs $E$ and $F$ satisfying
conditions (i) and (ii) of Question \ref{ques:norefle}. 
Then, by Theorem \ref{thm:gbf-lift}, there exists a $\Z$-graded, diagonal 
preserving $\ast$-homomorphism $\varphi 
\colon L_\C(E) \to L_\C(F)$ 
such that $K_0^{\gr}(\varphi) = \phi$.
If $\varphi$ were an isomorphism, 
then Remark \ref{rmk:diag-(L)} would imply that there exists an $S^1$-equivariant 
isomorphism $C^\ast(E) \to C^\ast(F)$ 
mapping $\overline{D(E)}$ to $\overline{D(F)}$, 
contradicting (ii).
\end{proof}

\begin{rmk} \label{rmk:kr}
Consider the matrices $A$ and $B$ in \cite[Section 7]{KR}, and the matrices $R$ and $S$ defined in loc. cit. that implement their shift equivalence over $\mathbb Z$. The argument of \cite[proof of Theorem 7.3.6]{symb} 
can be used to obtain a 
shift equivalence between $A$ and $B$ over $\mathbb N_0$.
Namely, the matrices
$\tilde S := S \cdot B^6$ and $\tilde R := R \cdot A^6$
are non-negative and 
such that $A^{13} = \tilde S \tilde R$, $B^{13} = \tilde R \tilde S$, 
$\tilde R A = B \tilde R$, $A \tilde S = \tilde S B$. 
Identifying $\{1, \ldots,7\}$ with $E_{\mathrm{KR}}^0$ and $F_{\mathrm{KR}}^0$, we obtain ordered $\mathbb Z[\sigma]$-module homomorphisms 
\begin{align*}
\phi &\colon \mathfrak{BF}_{\mathsf{gr}}(E_{\mathrm{KR}}) \to \mathfrak{BF}_{\mathsf{gr}}(F_{\mathrm{KR}}),  &x \mapsto \tilde S^t \cdot x,\\
\psi &\colon \mathfrak{BF}_{\mathsf{gr}}(F_{\mathrm{KR}}) \to \mathfrak{BF}_{\mathsf{gr}}(E_{\mathrm{KR}}),
&x \mapsto \tilde R^t \cdot x.\\
\end{align*}
We recall that these maps, induced by the shift equivalence above, are isomorphisms. To see this, note that $\psi \phi$ coincides with multiplication by $(A^t)^{13}$, which can be identified with multiplication by $\sigma^{-13}$ on $\mathfrak{BF}(E_{\mathrm{KR}})$. Likewise $\phi \psi$
coincides with multiplication by $\sigma^{-13}$ on $\mathfrak{BF}(F_{\mathrm{KR}})$.

Neither of these isomorphisms is unital. To check this, 
we first observe that $A^t$ and $B^t$
are invertible, since their determinant is $-   1$. Now,
in view of Remark \ref{rmk:filtercolim},
non-unitality of $\phi$ and $\psi$ follows from
the fact that neither ${\tilde R}^t$ nor 
${\tilde S}^t$ fix the vector $v := (1,1,1,1,1,1,1)^t$.
This is because both $\tilde R$
and $\tilde S$ have a column with all entries 
greater than $1$. Moreover, a direct computation 
shows that both $u := \tilde S^t \cdot (1,1,1,1,1,1,1)^t$
and $v := \tilde R^t \cdot (1,1,1,1,1,1,1)^t$ 
have all coordinates greater than one.

One could also consider shift equivalences defined
by $S \cdot B^{6+j}$ and 
$R \cdot A^{6+j}$ for $j \in \N$. 
The induced homomorphisms are given by the matrices 
$(B^j)^t \tilde S^t$ and $(A^j)^t \tilde R^t$ respectively;
these also fail to be unital by the following argument. 

Since $E_\mathrm{KR}$ and $F_{\mathrm{KR}}$
are regular, both $(B^j)^t$ and $(A^j)^t$ are 
non-negative with no zero columns. Using once again that 
all coordiantes of $u$ and 
$v$ are greater than one, it follows that 
$(B^j)^t \tilde S^t \cdot (1,1,1,1,1,1,1)^t = (B^j)^t u$ has a coordinate
which is greater than one; the same conclusion holds for $(A^j)^t \tilde R^t \cdot (1,1,1,1,1,1,1)^t$.
\end{rmk}

\begin{rmk} We remark that, by \cite[Theorem 5.6]{arcor}, 
when $\ell$ is a field the functor $K_0^{\gr} 
\colon \cat{Leavitt}_\ell \to \cO$ does
reflect monomorphisms.
\end{rmk}

\section{Tidy maps} \label{sec:tidyequiv}

We now characterize the type of morphisms constructed 
in the course of the proof of Theorem \ref{thm:gbf-lift}.

\begin{defn}\label{defn:tidy}
We say that an algebra homomorphism $\varphi \colon L(E)\to L(F)$
is \emph{tidy} if there exist $L \geq 0$ and, for each $w \in \reg(F)$,
$u \in \sink(F)$ and $i \in \{0, \ldots, L\}$, partitions
$\{\Gamma_{v,w}\}_{v \in E^0}$ of $F^L_{w}$ and
$\{\Sigma^i_{v,u}\}_{v\in E^0}$ of $F^i_{u}$ together with bijections
\eqref{bij:zetai}, \eqref{bij:zetaN}, and \eqref{bij:xi}
such that \eqref{def:phi-v} and \eqref{def:phi-e} hold 
for all $v \in E^0$ and $e \in E^1$.
\end{defn}

We shall relate the notion of tidy homomorphism with that 
of order preserving maps as defined below.

\begin{defn}\label{def:pc} Let $E$ be a finite graph.
The \emph{positive cone} of $L_\Z(E)$ is defined to be the 
sub-*-semiring of $L_\Z(E)$ given by
\[
PC(E) = \left\{\sum_{i=1}^n 
\lambda_i \alpha_i\beta_i^\ast : \alpha_i,\beta_i \in E^\infty,
r(\alpha_i) = r(\beta_i), \ \lambda_i \in \N_0, n \in \N
\right\}.
\]
An algebra homomorphism $\varphi \colon L_\Z(E) \to L_\Z(F)$
is \emph{order preserving} if $\varphi(PC(E))
\subset PC(F)$.
\end{defn}

\begin{lem}\label{lem:DcapPE}
If $E$ is a fnite graph and $D(E)$ the diagonal
subalgebra of $L_\Z(E)$, then
\[
D(E) \cap PC(E) = \left\{\sum_{\alpha \in \cR_n(E) \sqcup \cS_n(E)} 
\lambda_\alpha \alpha\alpha^\ast : \lambda_\alpha \in \N_0, n \in \N_0 \right\}.
\]
\end{lem}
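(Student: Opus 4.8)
The inclusion $\supseteq$ is immediate: an element $\sum_{\alpha}\lambda_\alpha\,\alpha\alpha^\ast$ with $\lambda_\alpha\in\N_0$ and $\alpha$ ranging over $\cR_n(E)\sqcup\cS_n(E)$ lies in $D(E)$ by the definition of the diagonal, and it lies in $PC(E)$ because each $\alpha\alpha^\ast$ trivially satisfies $r(\alpha)=r(\alpha)$ and the coefficients are non-negative. The content is the reverse inclusion, and the plan is to take $x\in D(E)\cap PC(E)$, write it in the orthogonal basis $\{\alpha\alpha^\ast\}$ of some $D(E)_n$, and show every coefficient is non-negative.

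First I would fix $n$ with $x\in\mathrm{span}_\Z\{\alpha\alpha^\ast:\alpha\in\cR_n(E)\sqcup\cS_n(E)\}$, which is possible since $D(E)$ is the increasing union of these spans, and use Lemma \ref{lem:pathsort}(iv) to write $x=\sum_{\alpha\in\cR_n(E)\sqcup\cS_n(E)}c_\alpha\,\alpha\alpha^\ast$ with uniquely determined $c_\alpha\in\Z$; uniqueness follows because the $\alpha\alpha^\ast$ are pairwise orthogonal nonzero idempotents, hence $\Z$-linearly independent. It then remains to prove $c_\alpha\ge 0$ for every $\alpha$.

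The main device is the degree-zero projection $\pi_0\colon L_\Z(E)\to L_\Z(E)_0$. Since $x\in D(E)\subset L_\Z(E)_0$ we have $\pi_0(x)=x$, while $\pi_0$ merely discards from any representative $\sum_i\lambda_i\gamma_i\delta_i^\ast$ the monomials with $|\gamma_i|\neq|\delta_i|$; hence $\pi_0$ maps $PC(E)$ into $PC(E)\cap L_\Z(E)_0$. Thus $x$ is an $\N_0$-combination of monomials $\gamma\delta^\ast$ with $|\gamma|=|\delta|$ and $r(\gamma)=r(\delta)$. Choosing $m\ge n$ with $x\in L_\Z(E)_{0,m}$ and refining each such monomial having $r(\gamma)\in\reg(E)$ through the identity $r(\gamma)=\sum_{\rho}\rho\rho^\ast$ obtained by localising Lemma \ref{lem:1-paths} at $r(\gamma)$, I would rewrite $x$ as an $\N_0$-combination of the matrix units $\{\gamma\delta^\ast\}$ of the matricial algebra $L_\Z(E)_{0,m}$. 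In particular the coefficient of every diagonal matrix unit $\gamma\gamma^\ast$ is non-negative.

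Finally, I would compare this with the expansion coming from $D(E)$: refining each $\alpha\alpha^\ast$ to level $m$ by the same identity (for a sink-terminating $\alpha$ this is $\alpha\alpha^\ast$ itself, and for a regular-terminating $\alpha$ it is a sum of strictly longer diagonal units) exhibits $c_\alpha$ as the coefficient of at least one diagonal matrix unit $\gamma\gamma^\ast$. Since the matrix units of $L_\Z(E)_{0,m}$ are linearly independent, the two computations of that coefficient must agree, forcing $c_\alpha\ge 0$. The step I expect to require the most care is precisely this reduction to the matricial algebra: because $PC(E)$ mixes monomials of several degrees, positivity is not a priori a matrix-positivity statement, and it is the combination of $\pi_0$ with the refinement of idempotents to a common level $m$ that converts membership in $PC(E)$ into non-negativity of the diagonal matrix-unit coefficients.
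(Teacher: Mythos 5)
Your proof is correct, and while it shares the paper's overall strategy (CK2-refinement of a $PC(E)$-representative followed by extraction of diagonal coefficients), the mechanism is genuinely different. The paper refines only the right-hand paths $\beta_i$ of a representative $\sum_i \lambda_i \alpha_i\beta_i^\ast$ to a common level $N$ and then reads off each coefficient $c_\gamma$ via the sandwich identity $c_\gamma r(\gamma) = \gamma^\ast x \gamma$, computed with Lemma \ref{lem:pathsort}; monomials of nonzero degree (those with $|\alpha_i| \neq |\beta_i|$) are disposed of implicitly, because both sides of that identity are homogeneous of degree zero and one is really equating degree-zero components. Your argument makes this degree bookkeeping explicit at the outset --- the observation that $\pi_0$ fixes $x$ and carries $PC(E)$ into $PC(E) \cap L_\Z(E)_0$ is exactly what the paper leaves tacit --- and then replaces the sandwich by a two-sided refinement to a common matricial level $m$ together with a comparison of coefficients in the matrix-unit basis of $L_\Z(E)_{0,m}$. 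What the paper's route buys is economy: it never needs to refine the $\alpha_i$, nor to invoke linear independence of the matrix units, since the sandwich extracts the coefficient directly. What your route buys is transparency about why positivity survives the passage to degree zero, at the cost of one bookkeeping requirement that you under-specify: to refine the degree-zero monomials $\gamma_i\delta_i^\ast$ of the $PC(E)$-representative into matrix units of $L_\Z(E)_{0,m}$, you must also choose $m \ge \max_i |\gamma_i|$ (your stated constraints $m \ge n$ and $x \in L_\Z(E)_{0,m}$ do not guarantee this, and monomials cannot be refined downwards). Since the filtration $L_\Z(E)_{0,m}$ is increasing, enlarging $m$ costs nothing, so this is a trivial fix rather than a gap.
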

\begin{proof} Let $x \in D(E) \cap PC(E)$.
Since $x \in PC(E)$, we may write 
$x = \lambda_1 \alpha_1 \beta_1^\ast + \cdots + 
\lambda_1 \alpha_k \beta_k^\ast$ for $\alpha_i$, $\beta_i \in E^\infty$ 
such that $r(\alpha_i) = r(\beta_i)$ and coefficients $\lambda_i \in \N_0$. 
Fix $M \in \N$ for which $x \in D(E)_M$.
Applying \cite[Relation (CK2) in Definition 1.2.3]{lpabook} if necessary, we may assume that there exists $N \ge M$ 
for which $\beta_i \in \cR_N(E) \sqcup \cS_N(E)$ for all $i$. In particular we have that $x \in D(E)_N$. Writing $x$ as an $\ell$-linear combination of 
elements $\gamma\gamma^\ast$ with $\gamma \in \cR(E)_N \sqcup \cS(E)_N$, the coefficient $c_\gamma \in \Z$ 
accompanying each projection $\gamma\gamma^\ast$ 
satisfies the following equation:
\[
c_\gamma r(\gamma) = \gamma^\ast x \gamma = \left(\sum_{i=1}^k \delta_{\gamma, \alpha_i}\delta_{\gamma, \beta_i} \lambda_i\right) r(\gamma).
\]
Thus, it follows that $c_\gamma \ge 0$; this completes the proof.
\end{proof}

\begin{thm}\label{thm:tidy-char} 
Let $E$ and $F$ be finite graphs and
$\varphi \colon L(E) \to L(F)$
a unital $\ell$-algebra homomorphism. The following 
statements are equivalent.
\begin{itemize}
    \item[i)] The morphism $\varphi$ is tidy.
    \item[ii)] The morphism $\varphi$ is the scalar extension along $\ell$ of a unital, order preserving, $\Z$-graded $\ast$-homomorphism $L_\Z(E) \to L_\Z(F)$.
\end{itemize}
\end{thm}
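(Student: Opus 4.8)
The plan is to prove the two implications separately, with the bulk of the work in showing $(i) \Rightarrow (ii)$, since $(ii) \Rightarrow (i)$ amounts to recognizing that any order-preserving graded $\ast$-homomorphism over $\Z$ must send vertices and edges to elements that assemble into the combinatorial data of Definition \ref{defn:tidy}.

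For $(i) \Rightarrow (ii)$, suppose $\varphi$ is tidy with associated length $L$, partitions $\{\Gamma_{v,w}\}$, $\{\Sigma^i_{v,u}\}$ and bijections \eqref{bij:zetai}, \eqref{bij:zetaN}, \eqref{bij:xi}. The key observation is that formulas \eqref{def:phi-v} and \eqref{def:phi-e} are manifestly defined over $\Z$: the images of vertices and edges are $\Z$-linear combinations (in fact $\N_0$-combinations) of elements of the form $\gamma\delta^\ast$ with $\gamma,\delta \in F^\infty$ and $r(\gamma)=r(\delta)$. First I would define $\varphi_\Z \colon L_\Z(E) \to L_\Z(F)$ by these same formulas; the verification that they satisfy relations \eqref{phi:P}--\eqref{phi:ck2} is identical to the computation already carried out inside the proof of Theorem \ref{thm:gbf-lift}, so $\varphi_\Z$ is a well-defined unital graded $\ast$-homomorphism and $\varphi = \ell \otimes_\Z \varphi_\Z$. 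It remains to check that $\varphi_\Z$ is order preserving, i.e. $\varphi_\Z(PC(E)) \subset PC(F)$. Since $PC(E)$ is generated as a $\ast$-semiring by the elements $v$ ($v \in E^0$) and $e$ ($e \in E^1$)—every $\alpha\beta^\ast$ is a product of edges and reversed edges with $\N_0$ coefficients—it suffices to check that $\varphi_\Z(v), \varphi_\Z(e) \in PC(F)$, which is immediate from \eqref{def:phi-v} and \eqref{def:phi-e} since these are $\N_0$-combinations of terms $\gamma\delta^\ast$ with matching ranges.

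For $(ii) \Rightarrow (i)$, suppose $\varphi = \ell \otimes_\Z \psi$ for a unital, order-preserving, graded $\ast$-homomorphism $\psi \colon L_\Z(E) \to L_\Z(F)$. The plan is to extract the combinatorial data from $\psi$. For each $v \in E^0$, the element $\psi(v)$ is a projection in $L_\Z(F)_0 \cap PC(F)$; applying Lemma \ref{lem:DcapPE} together with the fact that $\psi$ is diagonal preserving (every $\ast$-homomorphism over $\Z$ is, by \cite[Corollary 5]{diago}), one sees that $\psi(v) = \sum \lambda_\gamma \gamma\gamma^\ast$ is a sum of distinct projections $\gamma\gamma^\ast$ with $\gamma$ ranging over paths of a fixed cutoff length. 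The orthogonality relation $\psi(v)\psi(v') = \delta_{v,v'}\psi(v)$ and the completeness relation $\sum_v \psi(v) = 1$ (unitality, via Lemma \ref{lem:1-paths}) then force the supporting sets of paths to partition $F^L_w$ and the $F^i_u$; this produces the $\Gamma_{v,w}$ and $\Sigma^i_{v,u}$. The bijections \eqref{bij:zetai}, \eqref{bij:zetaN}, \eqref{bij:xi} are then recovered by analyzing $\psi(e)$: writing $\psi(e)$ as an $\N_0$-combination of terms $\gamma\delta^\ast$ and using relations \eqref{phi:E}, \eqref{phi:ck1}, \eqref{phi:ck2} to pin down the ranges and multiplicities, one reads off a matching between the index sets.

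\textbf{The main obstacle} I expect is the $(ii) \Rightarrow (i)$ direction, specifically the rigidity argument showing that an abstract order-preserving graded $\ast$-homomorphism must have $\psi(v)$ and $\psi(e)$ in exactly the normal form \eqref{def:phi-v}, \eqref{def:phi-e}, with all coefficients equal to $1$ and the bijections genuinely bijective rather than merely relations. The subtle point is ruling out cancellation: a priori $\psi(v)$ could be written using paths of different lengths, so one must use (CK2) to move everything to a common length $L$ and then invoke Lemma \ref{lem:DcapPE} to guarantee the coefficients are nonnegative integers, after which the projection and orthogonality relations force them to be $0$ or $1$. Establishing that the resulting assignment is a bijection of the stated index sets—rather than just an injection or a surjection—will require carefully counting using the range conditions imposed by \eqref{phi:ck1} and the source/range compatibility in \eqref{phi:E}, and this bookkeeping is where the real care is needed.
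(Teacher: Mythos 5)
Your proposal is correct and follows essentially the same route as the paper's proof: the forward direction reuses the computation from Theorem \ref{thm:gbf-lift} (your explicit check of order preservation via the generators $E^0 \cup E^1$ of $PC(E)$ fills in a step the paper leaves implicit), and the converse direction uses exactly the paper's ingredients --- diagonal preservation from \cite[Corollary 5]{diago}, Lemma \ref{lem:DcapPE} plus idempotency to force $\{0,1\}$ coefficients on $\varphi(v)$, orthogonality and unitality to get the partitions, and the relations \eqref{phi:E}, \eqref{phi:ck1}, \eqref{phi:ck2} to extract the bijections. The "main obstacle" you flag (rigidity of $\psi(e)$, ruling out multiplicities, and proving genuine bijectivity) is precisely what the paper resolves by computing $\varphi(e)^\ast\varphi(e) = \varphi(r(e))$, $\varphi(f)^\ast\varphi(e) = 0$, and $\sum_{s(e)=v}\varphi(e)\varphi(e)^\ast = \varphi(v)$, so your plan is on target.
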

\begin{proof} Implication $i) \Rightarrow ii)$ follows from the proof of Theorem \ref{thm:gbf-lift}, 
we prove the converse. 
We recall that by \cite[Corollary 5]{diago} 
all $\ast$-homomorphisms between Leavitt
path $\Z$-algebras are diagonal preserving.
Since $\varphi$ is the scalar extension 
of a graded and order preserving homomorphism $L_\Z(E) \to L_\Z(F)$, 
for each $e \in E^1$ there must exist
paths $\alpha_{e,1}, \ldots, \alpha_{e,n_e}$, 
$\beta_{e,1}, \ldots, \beta_{e,n_e}$ in $F$ 
and scalars $\lambda_{e,1},\ldots, \lambda_{e,n_e} \in \N_0$
such that $r(\alpha_{e,i}) = r(\beta_{e,i})$, $|\alpha_{e,i}| = 1 +|\beta_{e,i}|$ and 
$\varphi(e) = \sum_{i=1}^{n_e} \lambda_{e,i} \alpha_{e,i} \beta_{e,i}^\ast$. 
By a repeated 
application of \cite[Relation (CK2) in Definition 1.2.3]{lpabook}, 
there exists $N \in \N_0$ such that for all $e \in E^1$, we have 
$\beta_{e,i} \in \cR_{N}(F) \sqcup \cS_{N}(F)$. 

Since $E^0$ is finite and contained in
$D(E) \cap PC(E)$, increasing $N$ if necessary,
by Lemma \ref{lem:DcapPE} we may write
\[
\varphi(v) = \sum_{\alpha \in \cR_{N}(F)} \lambda_{v,\alpha} \alpha\alpha^\ast + 
\sum_{\beta \in \cS_N(F)}\lambda_{v,\beta} \beta\beta^\ast, 
\qquad (\lambda_{v,\alpha}, \lambda_{v,\beta} \in \N_0)   
\]
for all $v \in E^0$.
By the fact that $\varphi(v)$ is an idempotent together with Lemma \ref{lem:pathsort}, 
we obtain that the coefficients $\lambda_{v,\alpha}$ and $\lambda_{v,\beta}$ must lie in $\mathrm{Idem}(\Z) = \{0,1\}$. 

Put $\Gamma_{v,w} = \{\alpha \in E^N_{w} : \lambda_{v,\alpha} = 1\}$ for each $w \in \reg(E)$
and $\Sigma^i_{v,u} = \{\beta \in E^i_{u} : \lambda_{v,\beta} = 1\}$ for each $u \in \sink(E)$,
$i \in \{1,\ldots, N\}$. With this notation in place, 
\begin{equation}\label{conseq:phi-v}
    \varphi(v) = \sum_{w \in \reg(E)}\sum_{\alpha \in \Gamma_{v,w}}\alpha\alpha^\ast + 
\sum_{u \in \sink(E)} \sum_{i=0}^N \sum_{\beta \in \Sigma^i_{v,u}}\beta\beta^\ast.   
\end{equation}
Next we show that the sets $\Gamma_{v,w}$ and $\Sigma_{v,u}^i$ define the desired partitions.
Given two distinct vertices $v \neq v'$,
\[
0 = \varphi(v)\varphi(v') = 
\sum_{w \in \reg(E)}\sum_{\alpha \in \Gamma_{v,w} \cap \Gamma_{v',w}}\alpha\alpha^\ast + 
\sum_{u \in \sink(E)} \sum_{i=0}^N 
\sum_{\beta \in \Sigma^i_{v,u} \cap \Sigma^i_{v',u}}\beta\beta^\ast,
\]
and thus $\Gamma_{v,w} \cap \Gamma_{v',w} = \emptyset$ and $\Sigma^i_{v,u} \cap \Sigma_{v',u}^i = \emptyset$.
Now, by unitality of $\varphi$, 
\begin{align*}
1 = \sum_{v \in E^0} \varphi(v) &= 
\sum_{v \in E^0}\sum_{w \in \reg(E)}\sum_{\alpha \in \Gamma_{v,w}}\alpha\alpha^\ast + 
\sum_{v \in E^0}\sum_{u \in \sink(E)} \sum_{i=0}^N 
\sum_{\beta \in \Sigma^i_{v,u}}\beta\beta^\ast \\
&=\sum_{w \in \reg(E)}\sum_{\alpha \in \bigcup_{v \in E^0}\Gamma_{v,w}}\alpha\alpha^\ast + 
\sum_{u \in \sink(E)} \sum_{i=0}^N 
\sum_{\beta \in \bigcup_{v \in E^0}\Sigma^i_{v,u}}\beta\beta^\ast.
\end{align*}
Thus $\mathcal R_N(F) = \bigcup_{v \in E^0,w \in \reg(E)} \Gamma_{v,w}$ and 
$\cS_N(F) = \bigcup_{v \in E^0 , u \in \sink(E) , i \in \{0,\ldots,N\}} \Sigma^i_{v,u}$.
By intersecting these equalities 
with $E^N_{w}$ and $E^i_{u}$ we obtain that 
$E^N_{w} = \bigsqcup_{v \in E^0} \Gamma_{v,w}$ 
and $E^i_{u} = \bigsqcup_{v \in E^0} \Sigma^i_{v,u}$
respectively.

Fix $e \in E^1$. By our previous observations, 
we may write
\[
\varphi(e) = 
\sum_{\alpha \in \cR_{N}(F)} x_\alpha \alpha^\ast + \sum_{\beta\in\cS_N(F)}\sum_{i=0}^N y_\beta \beta^\ast
\]
with $x_\alpha$ a finite sum of paths of length $N+1$ 
with range $r(\alpha)$ and 
$y_\beta$ a finite sum of paths of length $|\beta|+1$
with range $r(\beta)$.
Moreover, since $\varphi(e) = \varphi(e)\varphi(r(e))$, by  \eqref{conseq:phi-v} 
we get 
\[
\varphi(e) = \sum_{w \in \reg(E)} 
\sum_{\alpha \in \Gamma_{r(e),w}} x_\alpha \alpha^\ast + \sum_{u \in \sink(E)} 
\sum_{i=0}^N\sum_{\beta \in \Sigma^i_{r(e),u}} y_\beta \beta^\ast.
\]
For each $\alpha, \beta$ as above, write 
$x_\alpha = \sum_{j=0}^{n_\alpha}c_{\alpha,j} \gamma_{\alpha,j}$ 
with $n_\alpha \ge 0$, $c_{\alpha,j} \in \N$, and distinct paths
$\gamma_{\alpha,j} \in E_{r(\alpha)}^{N+1}$. Likewise write 
$y_\beta = \sum_{j=0}^{n_\beta}c_{\beta,j} \gamma_{\beta,j}$ 
with $n_\beta \ge 0$, 
$\gamma_{\beta, j} \in E_{r(\beta)}^{|\beta|+1}$, $c_{\beta,j} \in \N$. Then 
\[
x_\alpha^\ast x_{\alpha'} = \left(\sum_{i \in [n_\alpha], j \in [n_{\alpha'}]}
\delta_{\gamma_{\alpha,i}, \gamma_{\alpha',j}}c_{\alpha,i}c_{\alpha',j}\right) r(\alpha)
\text{ and } 
y_\beta^\ast y_{\beta'} = \left(\sum_{i \in [n_\beta], j \in [n_{\beta'}]}
\delta_{\gamma_{\beta,i}, \gamma_{\beta',j}}c_{\beta,i}c_{\beta',j}\right) r(\beta).
\]
Since 
\[
\varphi(r(e)) = \varphi(e)^\ast\varphi(e)
= \sum_{w \in \reg(E)} 
\sum_{\alpha,\alpha' \in \Gamma_{r(e),w}} \alpha x_\alpha^\ast x_{\alpha'} 
\alpha'^\ast + \sum_{u \in \sink(E)} 
\sum_{i=0}^N\sum_{\beta,\beta' \in \Sigma^i_{r(e),u}} \beta 
y_\beta^\ast y_{\beta'} \beta'^\ast, 
\]
necessarily
$n_{\alpha} = n_{\beta} = 1$ 
and $c_{\alpha,1} = c_{\beta,1} = 1$ for all $\alpha, \beta$.
Hence $x_\alpha, y_\beta \in E^\infty$ and moreover 
the maps $\alpha \mapsto x_\alpha$
and $\beta \mapsto y_\beta$ have to be injective,
since $x_\alpha^\ast x_{\alpha'}$ and $y_{\beta}^\ast y_{\beta'}$
ought to be zero for $\alpha \neq \alpha'$ and $\beta \neq \beta'$.

We can now define $\xi(e,\alpha) := x_\alpha$ for each $\alpha \in \Gamma_{r(e),w}$
and $\zeta^i(e,\beta) := y_\beta$ for each $\beta \in \Sigma^i_{r(e),u}$, so that 
\[
\varphi(e) = \sum_{w \in \reg(E)} 
\sum_{\alpha \in \Gamma_{r(e),w}} \xi(e,\alpha) \alpha^\ast + \sum_{u \in \sink(E)} 
\sum_{i=0}^N\sum_{\beta \in \Sigma^i_{r(e),u}} \zeta^i(e,\beta) \beta^\ast.    
\]
Notice that $\varphi(s(e))\xi(e,\alpha)\alpha^\ast$ will be zero 
if $\xi(e,\alpha)$ does not start at a path of length $N$ belonging to 
$\bigsqcup_{w \in E^0} \Gamma_{s(e),w}$,
and it will coincide with $\xi(e,\alpha)\alpha^\ast$ otherwise. 
Since $\varphi(s(e))\varphi(e) = \varphi(e)$, this establishes that
$\xi(e,\alpha) \in \bigsqcup_{w \in \reg(F)} \Gamma_{s(e),w} \times F_{w,r(\alpha)}$
and in a similar fashion that 
$\zeta^N(e,\beta) \in
\bigsqcup_{w \in \reg(F)} \Gamma_{s(e),w} \times F_{w,r(\beta)}$ and 
$\zeta^i( e,\beta) \in \Sigma^{i+1}_{s(e),r(\beta)}$
when $|\beta| < N$.

The proof will be concluded once we see that
for vertices $v \in \reg(E), w \in \reg(F), u \in \sink(F)$
the maps  
\begin{align*}
    \xi_{v,w} &\colon \bigsqcup_{x \in E^0} E_{v,x} \times \Gamma_{x,w} \to 
    \bigsqcup_{z \in \reg(F)} \Gamma_{v,z} \times F_{z,w},\\
    \zeta_{v,u}^N &\colon \bigsqcup_{x \in E^0} 
    E_{v,x} \times \Sigma^N_{x,u} \to
    \bigsqcup_{z \in \reg(F)} \Gamma_{v,z}\times F_{z,u}\\
    \zeta_{v,u}^i &\colon \bigsqcup_{x \in E^0} 
    E_{v,x} \times \Sigma^{i}_{x,u} \to \Sigma_{v,u}^{i+1}, 
    \qquad (0 \leq i \leq N-1). 
\end{align*}
are bijective.

We have already observed that for a fixed $e \in E^1$, the assignments
$\alpha \mapsto \xi_{s(e),r(\alpha)}(e,\alpha)$ and 
$\beta \mapsto \zeta_{s(e),r(\beta)}^{|\beta|}(e,\beta)$
are injective. The fact that if $e,f \in E^1$ are distinct edges starting at $v$
and $\alpha \in \Gamma_{x,w}, \alpha' \in \Gamma_{y,w}$
then $\xi_{v,w}(e,\alpha) \neq \xi_{v,w}(f,\alpha')$ follows from 
the equation $\varphi(f)^\ast \varphi(e) = 0$. 
The same argument applies to prove the injectivity of
each function $\zeta_{v,w}^i$. 

To conclude we show surjectivity.
Note that by \cite[Relation (CK2) in Definition 1.2.3]{lpabook}
and \eqref{conseq:phi-v}, the element $\varphi(v)$ must coincide with 
the sum of all expressions $\gamma f(\gamma f)^\ast$ 
with $\gamma f \in \bigsqcup_{z \in E^0, z \in \reg(F)} \Gamma_{v,z}\times F_{z,z'}$ 
and $\delta\delta^\ast$
with $\delta \in \bigcup_{i=0}^N \bigcup_{u \in \sink(F)}\Sigma_{v,u}^i$. 
At the same time 
$\varphi(v) = \sum_{s(e) = v}\varphi(e)\varphi(e)^\ast$
must coincide with the sum of expressions
$\xi(e,\alpha)\xi(e,\alpha)^\ast$ and $\zeta^i(e,\beta)\zeta^i(e,\beta)^\ast$ for 
each $i$, $\alpha \in \cR(F)_N$, $\beta \in \cS(F)_N$. This together with restrictions of the respective codomains implies that each map is surjective.
\end{proof}

\begin{ex} If $E = \cR_1$ is the graph with one vertex and 
one loop, then $L(E) \simeq \ell[t,t^{-1}]$ where $t$ has degree $1$
and $t^\ast = t^{-1}$.
For each $N \ge 0$, there is only one path of length $N$ in $E$, which corresponds under the isomorphism above to the monomial $t^N$. 
Thus the only tidy map $f \colon L(E) \to L(E)$ is the identity. 
In particular, there exist unital $\Z$-graded, diagonal preserving, 
involution preserving maps that are not tidy; for example, 
the one determined by $t \mapsto -t$.
\end{ex}

\begin{coro} \label{coro:tidy-comp} The composite 
of two tidy homomorphisms is again tidy.
\end{coro}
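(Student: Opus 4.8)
The plan is to use the characterization of tidy maps provided by Theorem~\ref{thm:tidy-char} rather than manipulating partitions and bijections directly. By that theorem, a unital $\ell$-algebra homomorphism is tidy if and only if it is the scalar extension along $\ell$ of a unital, order preserving, $\Z$-graded $\ast$-homomorphism $L_\Z(E) \to L_\Z(F)$. So I would first reduce the problem to the category of Leavitt path $\Z$-algebras: suppose $\varphi \colon L(E) \to L(F)$ and $\psi \colon L(F) \to L(G)$ are tidy, and let $\varphi_\Z \colon L_\Z(E) \to L_\Z(F)$ and $\psi_\Z \colon L_\Z(F) \to L_\Z(G)$ be the underlying maps over $\Z$ whose scalar extensions recover $\varphi$ and $\psi$.

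Next I would verify that the composite $\psi_\Z \circ \varphi_\Z$ inherits each of the three properties required by part ii) of Theorem~\ref{thm:tidy-char}. Unitality is immediate since $\psi_\Z(\varphi_\Z(1)) = \psi_\Z(1) = 1$. The property of being $\Z$-graded is preserved under composition because each map sends homogeneous elements of degree $k$ to homogeneous elements of degree $k$, hence so does the composite. That $\psi_\Z \circ \varphi_\Z$ is a $\ast$-homomorphism follows from the fact that a composite of $\ast$-homomorphisms is again a $\ast$-homomorphism. The key remaining point is order preservation: since $\varphi_\Z(PC(E)) \subset PC(F)$ and $\psi_\Z(PC(F)) \subset PC(G)$, we immediately get $(\psi_\Z \circ \varphi_\Z)(PC(E)) \subset \psi_\Z(PC(F)) \subset PC(G)$, so the composite is order preserving.

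Finally, I would observe that scalar extension along $\ell$ is functorial, so the scalar extension of $\psi_\Z \circ \varphi_\Z$ is precisely $\psi \circ \varphi$. Having exhibited $\psi \circ \varphi$ as the scalar extension of a unital, order preserving, $\Z$-graded $\ast$-homomorphism, the reverse implication ii)~$\Rightarrow$~i) of Theorem~\ref{thm:tidy-char} lets us conclude that $\psi \circ \varphi$ is tidy.

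I do not anticipate a genuine obstacle here; the whole point of introducing the positive cone and proving Theorem~\ref{thm:tidy-char} is to convert the combinatorially intricate definition of tidiness into the algebraically robust condition ii), which is manifestly closed under composition. The only mild subtlety worth stating carefully is that the value of $L$ (and the associated partitions and bijections) for the composite need not be related in any simple way to those of the two factors; this is precisely why working through condition ii) rather than Definition~\ref{defn:tidy} directly is the clean route. Thus the corollary is essentially a formal consequence of the equivalence established in Theorem~\ref{thm:tidy-char}.
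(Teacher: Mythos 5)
Your proof is correct and is essentially the paper's own argument: the paper disposes of this corollary in one line by noting that the conditions of ii) in Theorem~\ref{thm:tidy-char} are preserved under composition, and your write-up simply spells out that observation (unitality, gradedness, $\ast$-preservation, preservation of positive cones, and functoriality of scalar extension) in full detail.
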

\begin{proof} The conditions of ii) in Theorem \ref{thm:tidy-char} are preserved by composition.
\end{proof}

\bibliographystyle{abbrvnat} 
\bibliography{main}

@Book{lpabook,
  title     = {Leavitt path algebras},
  publisher = {Springer, London},
  year      = {2017},
  author    = {Abrams, Gene and Ara, Pere and Siles Molina, Mercedes},
  volume    = {2191},
  series    = {Lecture Notes in Mathematics},
  isbn      = {978-1-4471-7343-4; 978-1-4471-7344-1},
  pages     = {xiii+287},
}

@Article{abramstomforde,
    author = {Abrams, Gene and Tomforde, Mark},
    title  = {Isomorphism and {M}orita equivalence of graph algebras},
   journal = {Trans. Amer. Math. Soc.},
   volume  = {363},
   year    = {2011},
   number  = {7},
   pages = {3733--3767},
   issn = {0002-9947},
   doi = {10.1090/S0002-9947-2011-05264-5},
   fjournal = {Transactions of the American Mathematical Society},
   mrclass = {46L05 (16D70)},
}

@Article{stein,
  author   = {Ara, Pere and Hazrat, Roozbeh and Li, Huanhuan and Sims, Aidan},
  title    = {Graded {S}teinberg algebras and their representations},
  journal  = {Algebra Number Theory},
  year     = {2018},
  volume   = {12},
  number   = {1},
  pages    = {131--172},
  issn     = {1937-0652},
  doi      = {10.2140/ant.2018.12.131},
  fjournal = {Algebra \& Number Theory},
  mrclass  = {22A22 (16G30 18B40)},
}

@Article{towards,
  author     = {Ara, P. and Pardo, E.},
  title      = {Towards a {$K$}-theoretic characterization of graded isomorphisms between {L}eavitt path algebras},
  journal    = {J. K-Theory},
  year       = {2014},
  volume     = {14},
  number     = {2},
  pages      = {203--245},
  issn       = {1865-2433},
  doi        = {10.1017/is014006003jkt269},
  fjournal   = {Journal of K-Theory. K-Theory and its Applications in Algebra, Geometry, Analysis \& Topology},
  mrclass    = {16S99 (19A49)},
}

@Article{arcor,
  author  = {Arnone, Guido and Cortiñas, Guillermo},
  title   = {Graded {$K$}-theory and Leavitt path algebras},
  journal = {Journal of Algebraic Combinatorics},
  year    = {2022},
  doi     = {10.1007/s10801-022-01184-5},
}

@Article{diago,
  author   = {Carlsen, Toke Meier},
  title    = {{$\ast$}-isomorphism of {L}eavitt path algebras over {$\mathbb Z$}},
  journal  = {Adv. Math.},
  year     = {2018},
  volume   = {324},
  pages    = {326--335},
  issn     = {0001-8708},
  doi      = {10.1016/j.aim.2017.11.018},
  fjournal = {Advances in Mathematics},
}

@Article{shift,
  author      = {Toke Meier Carlsen and Adam Dor-On and Søren Eilers},
  title       = {Shift equivalences through the lens of {C}untz-{K}rieger algebras},
  year   = {},
  note= {To appear in \textit{Analysis \& PDE},
  preprint available at \texttt{arXiv:2011.10320v5 [math.OA]}},
  journal = {},
  eprint      = {2011.10320v5},
  eprintclass = {math.OA},
  eprinttype  = {arXiv},
}

@Article{kkhlpa,
  author  = {Cortiñas, Guillermo},
  title   = {Classifying Leavitt path algebras up to involution preserving homotopy},
  journal = {Mathematische Annalen},
  year    = {2022},
  doi     = {10.1007/s00208-022-02436-2},
}

@Article{core,
  author   = {Gil Canto, Crist\'{o}bal and Nasr-Isfahani, Alireza},
  title    = {The commutative core of a {L}eavitt path algebra},
  journal  = {J. Algebra},
  year     = {2018},
  volume   = {511},
  pages    = {227--248},
  issn     = {0021-8693},
  doi      = {10.1016/j.jalgebra.2018.06.016},
  fjournal = {Journal of Algebra},
  mrclass  = {16W99 (16D70)},
}

@Article{hazrat,
  author   = {Hazrat, Roozbeh},
  title    = {The graded {G}rothendieck group and the classification of {L}eavitt path algebras},
  journal  = {Math. Ann.},
  year     = {2013},
  volume   = {355},
  number   = {1},
  pages    = {273--325},
  issn     = {0025-5831},
  doi      = {10.1007/s00208-012-0791-3},
  fjournal = {Mathematische Annalen},
  mrclass  = {16S99 (16W50 19A49)},
}

@Book{hazrat-book,
  title     = {Graded rings and graded {G}rothendieck groups},
  publisher = {Cambridge University Press, Cambridge},
  year      = {2016},
  author    = {Hazrat, Roozbeh},
  volume    = {435},
  series    = {London Mathematical Society Lecture Note Series},
  isbn      = {978-1-316-61958-2},
  doi       = {10.1017/CBO9781316717134},
  mrclass   = {16W50 (16E20 19-02 19A49 19D50 37B10)},
  pages     = {vii+235},
}

@Article{graded-str,
  author   = {Hazrat, R.},
  title    = {The graded structure of {L}eavitt path algebras},
  journal  = {Israel J. Math.},
  year     = {2013},
  volume   = {195},
  number   = {2},
  pages    = {833--895},
  issn     = {0021-2172},
  doi      = {10.1007/s11856-012-0138-5},
  fjournal = {Israel Journal of Mathematics},
  mrclass  = {16W50 (16S99)},
}

@Article{talented,
  author   = {Hazrat, Roozbeh and Li, Huanhuan},
  title    = {The talented monoid of a {L}eavitt path algebra},
  journal  = {J. Algebra},
  year     = {2020},
  volume   = {547},
  pages    = {430--455},
  issn     = {0021-8693},
  doi      = {10.1016/j.jalgebra.2019.11.033},
  fjournal = {Journal of Algebra},
  mrclass  = {18B40 (16D25 16E20 16S99 16W50 20M32)},
}

@Article{KR,
  author   = {Kim, K. H. and Roush, F. W.},
  title    = {The {W}illiams conjecture is false for irreducible subshifts},
  journal  = {Ann. of Math. (2)},
  year     = {1999},
  volume   = {149},
  number   = {2},
  pages    = {545--558},
  issn     = {0003-486X},
  doi      = {10.2307/120975},
  fjournal = {Annals of Mathematics. Second Series},
  mrclass  = {37B10 (54H20)},
}

@Book{symb,
    author = {Lind, Douglas and Marcus, Brian},
     title = {An introduction to symbolic dynamics and coding},
 publisher = {Cambridge University Press, Cambridge},
      year = {1995},
     pages = {xvi+495},
      isbn = {0-521-55124-2; 0-521-55900-6},
   rmrclass = {58F03 (15A48 54H20 58F20 94A24 94B60)},
  mrnumber = {1369092},
       DOI = {10.1017/CBO9780511626302},
}

@Article{conmlpa,
  author   = {Tomforde, Mark},
  title    = {Leavitt path algebras with coefficients in a commutative ring},
  journal  = {J. Pure Appl. Algebra},
  year     = {2011},
  volume   = {215},
  number   = {4},
  pages    = {471--484},
  issn     = {0022-4049},
  doi      = {10.1016/j.jpaa.2010.04.031},
  fjournal = {Journal of Pure and Applied Algebra},
  mrclass  = {16S99 (16W50)},
}

@Article{vas,
  author      = {Va\v{s}, Lia},
  title       = {The functor {$K_0^{\operatorname{gr}}$} is full and only weakly faithful},
  journal={Algebras and Representation Theory},
  doi={10.1007/s10468-023-10199-w},
  year={2023}
}

\end{document}